\newtheorem{theo}{Theorem}[section]
\newtheorem{prop}[theo]{Proposition}
\newtheorem{coro}[theo]{Corollary}
\newtheorem{lem}[theo]{Lemma}
\newtheorem{exe}[theo]{Example}
\newtheorem{hypo}[theo]{Hypothesis}
\newtheorem{Rq}[theo]{Remark}
\newcommand{\tcrg}{\textcolor{black}}
\newcommand{\tcr}{\textcolor{black}}
\newcommand{\1}{\mathbf{1}}
\newcommand{\N}{\mathbb{N}}                                              
\newcommand{\R}{\mathbb{R}}
\newcommand{\ER}{\mathbb{R}}
\newcommand{\cim}{\partial}
\newcommand{\cal}{\mathcal}                                
\newcommand{\PE}{\mathbb{P}}
\newcommand{\E}{\mathbb{E}}
\newcommand{\ES}{\mathbb{E}}
\newcommand{\XX}{\xi}
\newcommand{\MM}{\mathcal{E}}
\newcommand{\Aa}{{\cal A}}
\newcommand{\beq}{\begin{equation}}
\newcommand{\eeq}{\end{equation}}
\newcommand{\nocontentsline}[3]{}
\newcommand{\tocless}[2]{\bgroup\let\addcontentsline=\nocontentsline#1{#2}\egroup}
\definecolor{darkred}{rgb}{0.9,0.1,0.1}
\def\comment#1{\marginpar{\raggedright\tiny{\textcolor{darkred}{#1}}}}
\title[Approximation of Quasi-Stationary Distributions]{Stochastic approximation of Quasi-Stationary Distributions on compact spaces  and Applications}
\author{Michel Benaim, Bertrand Cloez, Fabien Panloup }
\address{M. Benaim: Institut de mathématiques
Rue Emile-Argand 11,
2007 Neuchâtel, CH.}
\address{B. Cloez: MISTEA, INRA, Montpellier SupAgro, Univ. Montpellier, Montpellier,
France}
\address{F. Panloup: LAREMA-UMR CNRS 6093, Université d'Angers, 2, Bd Lavoisier, 49045 ANGERS Cedex 01.} %
\email{\url{michel.benaim(at)unine.ch}, \url{bertrand.cloez(at)supagro.fr}, \url{fabien.panloup(at)univ-angers.fr} }%
\date{ Compiled \today}
\begin{document}
\maketitle

\begin{abstract}
{\tcr{As a continuation of} a recent paper, dealing with finite Markov chains,
this paper proposes and analyzes a recursive algorithm for the approximation
 of the quasi-stationary distribution of a general Markov chain living  on a compact metric space killed in finite time. The idea is to run the process until extinction and then to bring it back to life at a position randomly chosen according to the  (possibly weighted) empirical occupation measure of its past positions.  General conditions are given ensuring the convergence of this measure to the quasi-stationary distribution of the chain.
  We then apply this method  to the numerical approximation of the quasi-stationary distribution of
   a diffusion process killed on the boundary of a compact set. }
   {Finally, the sharpness of the assumptions is illustrated through the study of the algorithm in a non-irreducible setting}.

\end{abstract}
{\footnotesize
\noindent\textbf{Keywords.}  Quasi-stationary distributions ; stochastic approximation ; reinforced random walks ;
random perturbations of dynamical systems; extinction rate; Euler scheme.

\noindent\textbf{AMS-MSC.}  65C20; 60B12; 60J10, Secondary 34F05; 60J20; 60J60.

}


{\footnotesize %

\medskip


}
\section{Introduction}
Numerous models, in ecology and elsewhere, describe the temporal evolution of a system  by a Markov  process which eventually gets  killed  in  finite time. In population dynamics, for instance,  extinction in finite time
  is a typical effect of  finite population sizes. However, when  populations are large, extinction usually occurs  over very large time scales and the relevant phenomena are given by the behavior of the process conditionally to its non-extinction.

  More formally, let $(\xi_t)_{t\ge0}$ be a Markov process with values in ${\MM}\cup\{\cim\}$ where \tcr{$\MM$ is a metric space and $\cim \not \in \MM$} denotes an absorbing point (typically, the extinction set or the \tcr{complement} of a domain). Under appropriate assumptions, there exists a  distribution $\nu$ on $\MM$ (possibly depending on the initial distribution of $\xi$) such that
   \begin{equation}
   \label{eq:QLD}
   \lim_{t \rightarrow \infty} \PE( \xi_t \in . | \xi_t \neq \cim) = \nu(\cdot).
   \end{equation}
Such a distribution well describes the behavior of the process before extinction, and is necessarily (see e.g~\cite{MV12}) a  {\em quasi-stationary distribution} (QSD) in the sense that
$$\PE_\nu( \xi_t \in \cdot | \xi_t \neq \cim)= \nu(\cdot).$$
We refer the reader  to the survey paper \cite{MV12} or  the book \cite{CMMM11} for general background and a  comprehensive introduction to  the subject.  
 \smallskip

 The simulation and numerical approximation of  quasi-stationary distributions have received a lot of attention in the recent years and led to the development and  analysis of a class of particle systems algorithms known in the literature as
 {\em Fleming-Viot algorithms}  (see \cite{BHM00,CT13,MM00,V11}).
  The principle of these algorithms is to run a large number of particles independently until one is killed and then to replace the killed particle by an offspring whose location is
   randomly (and uniformly) chosen  among the locations of the other (alive) particles.
In the limit of an infinite number of particles, the (spatial) empirical occupation measure of the particles approaches the law of the process conditioned to never be absorbed\tcrg{; see for instance \cite[Theorem 1]{V11}} . Combined with (\ref{eq:QLD}), this  gives a method for  estimating  the QSD of the process.

 \tcr{In a related context the new paper \cite{2016arXiv160903436P}  demonstrates the importance  to simulate  QSDs in computational statistics as an alternative approach to classical MCMC simulations.}
 
\smallskip

\noindent Recently, in the setting of finite state Markov chains, Benaim and Cloez \cite{BC15}  (see also \cite{BGZ}) analyzed and generalized an alternative approach  introduced in \cite{AFP} in which the {\bf spatial occupation measure} of a set of particles is replaced by the {\bf temporal occupation measure} of a single particle.
Each time the particle is killed
it is risen at a location  randomly chosen according to its temporal occupation measure.
 The details of the construction are recalled in Section \ref{setandmainres}. \smallskip

\noindent {The objective of this paper is twofold:  on one hand,  we aim at extending the results of \cite{BC15}  to the setting of Markov chains with values in a  general space, being killed when leaving a compact domain. \tcrg{Indeed, up to our knowledge, in all the previous works for this algorithm \cite{AFP, BC15,BGZ}, the state space $\MM$ is finite.} On the other hand,  we also explore various applications: we propose and investigate a numerical procedure, based on an Euler discretization, for approximating QSD of diffusions. 

\tcrg{ \noindent {In contrast with the Fleming-Viot particle system, this algorithm requires less calculus but more memory. Also, it only depends on only one parameter (the time) and then approximates in the same time the conditioned dynamics and its long time limit;} \tcr{in particular, it does not require to calibrate simultaneously the number of particles and the time parameter as in the standard Fleming-Viot approach}. Instead, in view of a convergence result for this algorithm, one needs to obtain some properties which are similar to  the commutation of the limits of large particles and of the long time for the Fleming-Viot algorithm. \tcrg{ For the particle system, this type of problem is not completely solved in general but some results have been obtained in some particular settings; see for instance \cite{BHM00,CT16,CT13,FM07, GK04, V11-ejp}.} \tcr{Note that} \tcrg{an example where} \tcr{the  commutation property does not hold} \tcrg{is exhibited in Section \ref{sect:exe}. Besides, let us cite \cite{DG99}, \cite[Section 3]{BC15} or \cite{OV16} which give three different discrete-time Fleming-Viot type algorithms where the double limit is either not proved or proved under restrictive assumptions. Another difference is that the Fleming-Viot process is often developed in continuous-time although our stochastic approximation scheme is in discrete time. As a consequence it is difficult to compare our assumptions on the transition kernel with the ones of the  articles mentioned previously. However, implementing the methods of \cite{BHM00,GK04,V11-ejp} requires a discretization and then leads to the QSD of an Euler-type sequence instead of the one of the target diffusion process. Theorem \ref{prop:diff1} corroborates the consistence of their methods and also shows the consistence of our algorithm. 
   }}

\textbf{Outline.} The paper is organized as follows:
 In Section \ref{setandmainres} we detail the general framework, the hypotheses  and state our main results.
In  Section \ref{sect:appli}, we first discuss our  assumptions in the simple case of finite Markov chains and then focus
 on \tcr{the application to the }  numerical approximation of QSDs for diffusions
 (including theoretical results and numerical tests), 
  The sequel of the paper (Sections \ref{sect:Prelim}, \ref{sect:ODE}, \ref{sect:APT}, and \ref{sec:PMR})
   is mainly devoted to the proofs and the details about their sequencing will be given at the end of Section \ref{sect:appli}.
    We end the paper by some potential extensions of this work to some more general settings such
    as non-compact domains or continuous-time reinforced strategies.

\section{Setting and Main Results}\label{setandmainres}
\subsection{Notation and Setting}
Let ${\MM}$ be a compact metric space\footnote{For comments about a possible extension to the non-compact case, see Section \ref{sec:extensions}} equipped with its Borel $\sigma$-field ${\cal B}(\MM).$ Throughout, we let ${\cal B}(\MM, \R)$ denote the set of real valued bounded measurable functions on $\MM$ and ${\cal C}(\MM, \R) \subset {\cal B}(\MM, \R)$ the subset of continuous functions. For all  $f \in {\cal B}(\MM, \R)$ we let  $\|f\|_{\infty} = \sup_{x \in \MM} |f(x)|$ and we let $\1 $ denote the constant map $x \mapsto 1.$
  We let ${\cal P}(\MM)$ denote the space of (Borel) probabilities over $\MM$ equipped with the topology of weak* convergence.  For all $\mu \in {\cal P}(\MM)$ and $f \in  {\cal B}(\MM, \R),$ or $f$ nonnegative measurable, we write $\mu(f)$ (or $\mu f$) for $\int_{\MM} f d\mu.$  Recall that $\mu_n \rightarrow \mu$ in ${\cal P}(\MM)$ provided $\mu_n(f) \rightarrow\mu(f)$ for all $f \in {\cal C}(\MM, \R),$ and that (by compactness of $\MM$ and Prohorov Theorem), ${\cal P}(\MM)$ is a compact metric space (see e.g~ \cite[Chapter 11]{dudley}). \smallskip

A {\em  sub-Markovian kernel} on $\MM$ is a map $Q : \MM \times {\cal B}(\MM) \mapsto [0,1]$ such that for all $x \in \MM,$ $A \mapsto Q(x, A)$ is a nonzero   measure  (i.e~$Q(x,\MM) > 0$) and for all $A \in {\cal B}(\MM), x \mapsto Q(x,A)$ is measurable.  If furthermore $Q(x, \MM) = 1$ for all $x \in \MM,$ then $Q$ is called a {\em Markov (or Markovian) kernel}.

Let $Q$ be  a sub-Markovian (respectively Markovian) kernel. For every $f \in {\cal B}(\MM,\R)$  and  $\mu \in {\cal P}(\MM),$  we let $Qf$ and $\mu Q$ respectively denote the map and measure defined by
$$Qf(x) = \int_{\MM} f(y) Q(x,dy)\quad \textnormal{and}\quad \mu Q(\cdot) = \int_{\MM} \mu(dx)Q(x,\cdot).$$
If   $Qf \in {\cal C}(\MM,\R)$ whenever $f \in {\cal C}(\MM,\R),$ then $Q$ is said to be
{\em Feller}.
For all  $n \in \N,$ we let $Q^n$ denote  the sub-Markovian (respectively Markovian) kernel  recursively defined by $$Q^{n+1}(x,\cdot) = \int_{\MM} Q(y, \cdot)  Q^n(x,dy) \mbox{ and } Q^0(x,\cdot) = \delta_x.$$
A probability $\mu \in {\cal P}(\MM)$ is called  a {\em quasi-stationary distribution} (QSD) for $Q$ if $\mu$ and $\mu Q$ are proportional or, equivalently, if,
$$\mu = \frac{\mu Q}{\mu Q \1}.$$
The number
\begin{equation}
\label{eq:defTheta}
\Theta(\mu) : = \mu Q \1
\end{equation} is called the {\em extinction rate} of $\mu.$

Note that when $Q$ is Markovian, a quasi stationary distribution is {\em stationary} (or {\em invariant}) in the sense that
$\mu = \mu Q.$ {In this case $\Theta(\mu)=1$, otherwise $\Theta(\mu) <1$.}
\medskip

From now on and throughout the remainder of the paper  we assume  given a Feller sub-Markovian kernel $K$ on $\MM.$

Let $\cim \notin \MM$ be a {\em cemetery point}. Associated to $K$ is the Markov kernel $\widebar{K}$ on $\MM \cup \{ \cim \}$ defined, for all $x \in \MM, A \in {\cal B}(\MM),$ by
\begin{equation}\label{eq:defkbar}
\left\{
  \begin{array}{ll}
    \widebar{K}(x,A) = K(x,A), &   \\
    \widebar{K}(x,\{\cim\}) = 1- K(x, \MM), & \mbox{ and }\\
    \widebar{K}(\cim,\{\cim\}) = 1 . &
  \end{array}
\right.
\end{equation}
The kernel $\widebar{K}$ can be understood as the transition kernel of a Markov chain $(Y_n)_{n\ge0}$  on $\MM \cup \{ \cim \}$
 whose transitions in $\MM$ are given by $K$ and which is "killed" when it leaves ${\MM}.$

Let $\delta :{\MM} \mapsto [0,1]$ be  the function defined by
 $$\delta = \1 - K \1.$$ That is, for every $x\in\MM$,
\begin{equation}
\label{eq:defdelta}
\delta(x)=\widebar{K}(x,\{\cim\}) = 1- K(x, \MM).
\end{equation}
For a given $\mu \in {\cal P}({\MM}),$ we let  $K_\mu$ denote  the Markov kernel on $\MM$ defined by
$$ K_\mu(x,A)=K(x,A)+\delta(x)\mu(A)$$
for all $x\in \MM$ and $A \in {\cal B}(\MM).$
Equivalently,  for every $f \in {\cal B}(\MM,\R),$
$$K_\mu f(x)=Kf(x)+\delta(x)\mu(f).$$
 The chain induced by $K_{\mu}$ behaves like $(Y_n)$ until it is killed and  then is  redistributed in $\MM$ according to $\mu.$ Note that  $K_\mu$ inherits the Feller continuity from $K$.
 For the sequel, an important feature of $K_{\mu}$  is that   $\mu$ is a QSD for $K$ if and only if it is invariant for $K_{\mu}$  (see Lemma \ref{lemme1} for details).

Let $(\Omega, {\cal F}, \PE)$ be a probability space equipped with a filtration $\{{\cal F}_n\}_{n \geq 0}$ (i.e an increasing family of $\sigma$-fields). We now consider an $\MM$-valued random process $(X_n)_{n\ge0}$  defined on $(\Omega, {\cal F}, P)$ adapted to $\{{\cal F}_n\}_{n \geq 0}$ such that
\begin{equation}\label{dynamicsxn}
X_0=x\in{\MM}\quad\textnormal{ and $\forall\,n\ge0$,}\quad \PE(X_{n+1}\in dy|{\cal F}_n)= K_{\mu_n}(X_n,dy),
\end{equation}
where
\begin{equation}\label{dynamicsmun}
\mu_n=\frac{\sum_{k=0}^n \eta_k \delta_{X_{k}}}{ \sum_{k=0}^n\eta_k}
\end{equation}
is a {\em weighted occupation measure}.
Here  $(\eta_n)_{n\geq0}$ is a sequence of {positive} numbers satisfying certain conditions that will be specified below (see Hypothesis \ref{hypogain}).

With the definition of $K_\mu$, this  means that whenever the original process $(Y_n)_{n\ge0}$ is killed, it is  redistributed in ${\MM}$
according to its weighted empirical  occupation measure $\mu_n$.
 Note that such a process is a type of {\em reinforced random walk} (see e.g~ \cite{P07}).
  It is reminiscent of  {\em interacting particle systems} algorithms  used for the simulation of QSDs such as the so-called {\em Fleming-Viot algorithm} (see \cite{BHM00, CT13, V11} and \cite[Section 3]{BC15}). However, while these latter algorithms involve a large number of particles whose individual dynamics depend on the {\em spatial occupation measure} of the particles, here  there is a single particle whose dynamics depends on its {\em temporal occupation measure}.
From a simulation point of view, this is of potential interest, suggesting fewer computations (but more memory) and leading to a recursive method
 which avoids  (at least in name) the trade-off between the number of particles and the time horizon induced by Fleming-Viot algorithm.

Set, for $n\geq 0,$  $$\gamma_n =\frac{\eta_{n}}{\sum_{k=0}^n \eta_k}.$$  The occupation measure can then be computed recursively as follows:
\begin{equation}
 \label{eq:mupasapas}
\mu_{n+1}=(1-\gamma_{n+1})\mu_n+\gamma_{n+1}\delta_{X_{n+1}}.
\end{equation}
Under appropriate irreducibility assumptions (see Hypothesis \ref{hypoK} below), $K_{\mu}$ admits a unique invariant probability $\Pi_{\mu}.$ Owing to the above characterization of QSDs as fixed points of $\mu\mapsto \Pi_\mu$, we choose to rewrite the evolution of $(\mu_n)$ as:
\begin{equation}
\label{eq:algosto}
\mu_{n+1}=\mu_n+\gamma_{n+1}( -\mu_n + \Pi_{\mu_n}) +\gamma_{n+1} \varepsilon_{n}
\end{equation}
where  $\varepsilon_n=\delta_{X_{n+1}}-\Pi_{\mu_{n}}$. The process  $(\mu_n)$ is therefore a  {\em stochastic approximation algorithm} associated to  the ordinary differential equation (ODE) (for which rigorous sense will be given in Section \ref{sect:ODE}):
\begin{equation}
\label{eq:ODEE}
\dot{\mu}=-\mu+\Pi_{\mu}.
\end{equation}
The almost sure convergence of $(\mu_n)$ towards $\mu^\star$ (the QSD of $K$) will then be achieved by  proving that  :
\begin{itemize}
\item[(i)] The asymptotic dynamics of $(\mu_n)_{n\ge0}$ matches with that of solutions of the above ODE: more precisely, $(\mu_n)_{n\ge0}$ is  (at a different scale) an \textit{asymptotic pseudo-trajectory} of the ODE (in the sense of Benaim and Hirsch \cite{BH96}, see \cite{B99} for background).
\item[(ii)] The set ${\rm Fix(\Pi)}=\{\mu\in{\cal P}({\MM}), \mu=\Pi_\mu\}$   reduces to $\mu^\star$ and is a global attractor of the ODE.
\end{itemize}

This strategy was applied in \cite{BC15} when $\MM$ is a  finite set. However,  the proofs in  \cite{BC15} strongly rely on finite dimensional arguments that cannot be applied in this more general setting and the new proofs will require a careful study of the kernel family $(K_\mu)_{\mu}$.\\

\subsection{Main results}
 We first summarize the standing assumptions under which our main results will be proved. We begin by the assumptions on $(\gamma_n)_{n\ge1}$.
 \begin{hypo}[Standing assumption on $(\gamma_n)$]
\label{hypogain}
\noindent
 The sequence $(\gamma_n)_{n\ge0}$ appearing in equation (\ref{eq:mupasapas}) is a non-increasing sequence such that
\begin{equation}
\label{eq:gammahyp}
\sum_{n\geq 0} \gamma_n = + \infty \quad \mbox{and} \quad \lim_{n\rightarrow+\infty}\gamma_n \ln(n)=0.
\end{equation}
\end{hypo}
\noindent {The typical sequence is given by $\gamma_n= \frac{1}{n+1}$}, {which corresponds to $\eta_n=1$ for all $n\ge1$}.\smallskip

 Now, let us focus on the assumptions on the sub-Markovian kernel $K$. We say that  a non-empty set $A \in  \cal{B}(\MM) \cup \{\cim\}$  is {\em accessible} if for all $x \in \MM$
 $$\sum_{n \geq 1} \widebar{K}^n(x,A) > 0.$$
It is called a {\em weak\footnote{this is a mildly weaker definition than the usual definition of small or petite sets (see e.g~\cite{duf00,MT93}) } small set} if $A \subset \MM$ and  there exists a probability measure $\Psi$ on $\MM$ and
 $\epsilon > 0$ such that for all $x \in A$
\begin{equation}
\label{eq:smallset}
\sum_{n \geq 1} K^n(x,dy) \geq \epsilon \Psi(dy).
\end{equation}
\begin{hypo}[Standing assumptions on $K$]
\label{hypoK}
\noindent
\begin{itemize}
\item{$\mathbf{(H_1)}$ } $K$ is Feller.
\item{$\mathbf{(H_2)}$} The cemetery point $\{\cim\}$ is accessible.
\end{itemize}
\end{hypo}
Assumptions $\mathbf{H_1}$ and $\mathbf{H_2}$  imply the existence of  a quasi-stationary distribution but are not sufficient to ensure its uniqueness (see the example developed in Subsection \ref{sect:exe}). For this, we require the supplementary assumptions below
\begin{hypo}[Additional assumptions on $K$]
\label{hypoH3}
\noindent
\begin{itemize}
\item{$\mathbf{(H_3)}$} There exists an open accessible {weak} small set $U$.
\item{$\mathbf{(H_4)}$} There exists a non increasing convex function $C : \R^+ \mapsto \R^+$ satisfying
\begin{equation}\label{eq:condC1}
\int_0^{\infty} C(s) ds = \infty
\end{equation}
such that
$$\frac{\Psi(K^n \1)}{\sup_{x \in \MM} K^n \1(x)} \geq C(n)$$ where
$\Psi$ \tcr{satisfies} equation (\ref{eq:smallset}).
\end{itemize}
\end{hypo}

Roughly, the latter hypothesis  stipulates that the rate at which the process dies is uniformly  controlled, in terms of the initial point. This  is motivated by the recent work of Champagnat and Villemonais \cite{CV14} in which it is proved that under  mildly stronger versions of $\mathbf{H_3}$ (namely, $K^l(x,\dot) \geq \epsilon \Psi$  for some $l$ independent of $x$) and $\mathbf{H_4}$ (namely $C(t) \geq c > 0$)
\tcr{the sequence of conditioned laws defined by} $$\mathbb{P}_x (Y_n \in \cdot | Y_n \in \MM) = \frac{K^n(x, \cdot)}{K^n \1 (x)}, \quad n\ge0,$$
   converges, as $n \rightarrow \infty,$ exponentially fast to a (unique) QSD. Here, Assumption $\mathbf{H_4}$ which does not require the function $s\mapsto C(s)$ to be lower-bounded does certainly not guarantee the exponential rate but is a sharper and almost necessary assumption for the uniqueness and the \textit{attractiveness} of the QSD (on this topic, see also Remark \ref{rem:sufficientcondition} below and Proposition \ref{th:sharpH4}). More precisely,  it will be shown that under  $\mathbf{H_3}$ and  $\mathbf{H_4}$, the semiflow induced by  (\ref{eq:ODEE}) is globally asymptotically stable (i.e ${\rm Fix}(\Pi)$  \tcr{is a singleton} and is a global attractor).

\smallskip

%

\begin{Rq}[{Sufficient condition}]\label{rem:sufficientcondition} {\rm
A simple condition ensuring  Hypothesis \ref{hypoH3}  is that, for some $l \geq 1,$ constants $c_1,c_2>0$ and some $\Psi \in {\cal P}(\MM)$
\begin{equation}
\label{eq:minmaj}
c_1 \Psi (dy) \leq K^\ell(x,dy) \leq c_2 \Psi(dy).
\end{equation}
 Indeed, under (\ref{eq:minmaj}), for  $n\ge\ell,$ $$c_2 \Psi (K^{n-\ell} \1)\ge K^n \1 \geq c_1 \Psi (K^{n-\ell} \1)$$ while for
$n \le \ell,$ $\1 \ge K^n \1 \ge K^\ell \1 \geq c_1.$ Hence $C(t) = \min\left(\dfrac{c_1}{c_2}, c_1 \right) >0.$
Note that \eqref{eq:minmaj}, which is usual in the literature (see $e.g.$ \cite[Theorem 3.2]{berglund_landon}), is satisfied if $K^\ell$ admits a continuous and positive density with respect to a positive reference measure.}\smallskip
\end{Rq}
\noindent Finally,  note that in Hypotheses \ref{hypoK} and \ref{hypoH3} there is no aperiodicity assumption on $K$.\smallskip

We are now able to state our main general result about the convergence of the empirical measure $(\mu_n)_{n\ge0}$ towards the QSD.

\begin{theo}[Convergence of the algorithm]\label{theo1}  Assume Hypotheses \ref{hypogain}, \ref{hypoK} and \ref{hypoH3}. Then, $K$ has a unique QSD  $\mu^\star$ and the sequence $(\mu_n)_{n\ge0}$ \tcr{defined by \eqref{dynamicsmun}} converges $a.s.$ in ${\cal P}({\MM})$ towards $\mu^\star$.
\end{theo}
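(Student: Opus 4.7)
The plan is to apply the ODE method for stochastic approximation, following the strategy outlined around equations (\ref{eq:algosto})--(\ref{eq:ODEE}). The first preliminary step is to show that for every $\mu \in \mathcal{P}(\MM)$ the Markov kernel $K_\mu$ has a unique invariant probability $\Pi_\mu$ and that the map $\mu \mapsto \Pi_\mu$ is continuous on the compact space $\mathcal{P}(\MM)$. Existence follows from Krylov--Bogolyubov since $K_\mu$ is Feller and $\MM$ is compact. Uniqueness rests on Hypothesis $\mathbf{H_3}$: the minorization (\ref{eq:smallset}) on the accessible open weak small set $U$, combined with the redistribution term $\delta(x)\mu(\cdot)$ appearing in the definition of $K_\mu$, should yield a Doeblin-type lower bound for some iterate of $K_\mu$; accessibility of $\{\cim\}$ under $\widebar{K}$ together with the algorithm's redistribution ensures that the effective mass $\mu(U)$ cannot degenerate along trajectories, so this bound is uniform enough in $\mu$ to also produce continuity.

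Next, I would use the stochastic approximation form (\ref{eq:algosto}) and verify that $(\mu_n)$ is almost surely an asymptotic pseudo-trajectory, in the sense of Benaim--Hirsch, of the semiflow generated by (\ref{eq:ODEE}) on $\mathcal{P}(\MM)$. The noise $\varepsilon_n = \delta_{X_{n+1}} - \Pi_{\mu_n}$ decomposes as a martingale increment $\delta_{X_{n+1}} - K_{\mu_n}(X_n,\cdot)$ plus a remainder $K_{\mu_n}(X_n,\cdot) - \Pi_{\mu_n}$. Tested against a fixed $f\in\mathcal{C}(\MM,\R)$, the martingale part is controlled by an $L^2$ argument using $\sum \gamma_n^2 < \infty$ (a consequence of $\gamma_n \ln n \to 0$), while the remainder is handled by solving a Poisson equation for $K_{\mu_n}$ uniformly in $\mu_n$, which is feasible thanks to the Doeblin structure obtained in the previous step; an Abel-type summation together with the continuity of $\Pi$ then delivers the required averaging. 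Testing against a countable convergence-determining subfamily of $\mathcal{C}(\MM,\R)$ upgrades these pointwise bounds to convergence in $\mathcal{P}(\MM)$.

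The crucial step is then to show, under $\mathbf{H_3}$ and $\mathbf{H_4}$, that $\mathrm{Fix}(\Pi) = \{\mu^\star\}$ and that $\mu^\star$ is a global attractor of (\ref{eq:ODEE}). Fixed points of $\Pi$ are exactly the QSDs of $K$ (via the characterization recalled before Lemma~\ref{lemme1}), so uniqueness of $\mu^\star$ and global attractivity can be tackled together. The mild formulation $\mu_t = e^{-t}\mu_0 + \int_0^t e^{-(t-s)}\Pi_{\mu_s}\,ds$ shows that the long-time behaviour of the semiflow is driven by the same mechanism as in the Champagnat--Villemonais convergence of $K^n(x,\cdot)/K^n\mathbf{1}(x)$ to $\mu^\star$: minorization on $U$ coupled with a quantitative lower bound on survival. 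The integral divergence $\int_0^\infty C(s)\,ds = +\infty$ of $\mathbf{H_4}$ is precisely what propagates this averaging through infinite time even when $C$ decays; I would therefore expect the argument to build a Lyapunov-type functional, exploiting the convexity and monotonicity of $C$, whose derivative along the ODE is nonpositive and vanishes only at $\mu^\star$.

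This third step is the main obstacle: without a uniform lower bound on the survival rate, classical spectral-gap or contraction-in-total-variation arguments are unavailable, and the analysis must be genuinely integrated in time to exploit the divergence of $\int C$. Once the global attractor property is established, compactness of $\mathcal{P}(\MM)$ together with the pseudo-trajectory property of step two and the Benaim--Hirsch theorem (see \cite{B99}) imply that the almost sure limit set of $(\mu_n)$ is contained in $\{\mu^\star\}$, which proves the theorem.
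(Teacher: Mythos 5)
You correctly identify the overall strategy (Lemma~\ref{lemme1}--type reduction to fixed points of $\mu\mapsto\Pi_\mu$, asymptotic pseudo-trajectory of the ODE~\eqref{eq:ODEE}, then global asymptotic stability of the flow), but two of the three steps contain genuine gaps, and one preliminary attribution is incorrect.

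First, uniqueness of $\Pi_\mu$ does not require $\mathbf{H_3}$, and the mechanism you invoke (mass of $\mu$ on the small set $U$) is not the one that works: there is no reason $\mu(U)$ should be bounded away from zero. What does work, and is what the paper does in Lemmas~\ref{lem:expect} and~\ref{lem3}, is that $\mathbf{H_2}$ gives a uniform $N$ and $\delta_0>0$ with $\widebar{K}^N(x,\{\cim\})\ge\delta_0$, whence $K_\mu^k f \ge \mu(f)K^{k-1}\delta$ and the Poissonization $P_\mu$ satisfies the Doeblin bound $P_\mu(x,\cdot)\ge\varepsilon\,\mu(\cdot)$. The minorization measure is $\mu$ \emph{itself}, which is exactly why the contraction rate $\varepsilon$ is uniform over all $\mu\in{\cal P}(\MM)$. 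Your proposed route via $\Psi$ on $U$ does not obviously give this uniformity.

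Second, the martingale control is wrong as stated: $\gamma_n\ln n\to 0$ does \emph{not} imply $\sum\gamma_n^2<\infty$ (take $\gamma_n=(\ln n)^{-2}$). An $L^2$/Doob argument is therefore unavailable under Hypothesis~\ref{hypogain}. The increments $\Delta M_n(f)$ are bounded (hence sub-Gaussian) thanks to the uniform Poisson-equation bound of Lemma~\ref{prop:poisequa}(i), and one must use exponential-martingale concentration \`a la \cite[Prop.~4.4]{B99}, for which $\gamma_n\ln n\to0$ is precisely the sharp condition. So this step cannot be filled in as you describe.

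Third, and most importantly, the global asymptotic stability step is where your proposal stops being a proof and becomes a hope. The paper does not construct a Lyapunov functional. Its argument is a \emph{linearization}: the semiflow admits the closed-form expression $\widetilde\Phi_t(\mu)=\mu e^{t\Aa}/(\mu e^{t\Aa}\1)$ (with $\Aa=\sum_{n\ge0}K^n$, well defined by Proposition~\ref{lem:explicitpimu}), followed by the time change $\tau_\mu$. One then introduces the nonhomogeneous Markov operators $R_{t,s}f=e^{(t-s)\Aa}(f g_s)/g_t$, which satisfy $\delta_x R_t=\widetilde\Phi_t(\delta_x)$; Lemma~\ref{lem:suffisGAS} reduces GAS to $\sup_{x,y}d_\omega(\delta_x R_t,\delta_y R_t)\to 0$; under $\mathbf{H_3}$ one obtains a Doeblin minorization $R_{n+1,n}(x,\cdot)\ge\epsilon_n\Psi_n(\cdot)$ with $\epsilon_n\asymp\Psi(g_n)/\|g_n\|_\infty$; and $\mathbf{H_4}$ is converted into $\sum_n\epsilon_n=\infty$ via a combinatorial FKG argument on geometric/uniform/Poisson randomizations (Proposition~\ref{prop:GAS3}). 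None of this is visible from the mild-formulation $\mu_t=e^{-t}\mu_0+\int_0^t e^{-(t-s)}\Pi_{\mu_s}\,ds$, and the convexity of $C$ is used for FKG/Jensen, not as monotonicity of a Lyapunov functional. Until this step is supplied, the argument does not establish the unique global attractor, and the final appeal to compactness and Bena\"{i}m--Hirsch cannot conclude.
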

In fact, the previous setting also leads to the convergence in distribution of the reinforced random walk:

\begin{theo}[Convergence in distribution of $(X_n)_{n\ge0}$]
\label{th:cvmarche}
Suppose that the assumptions of Theorem \ref{theo1} hold. Then, for any starting distribution $\alpha$, $(X_n)_{n\ge0}$ \tcr{defined by  \eqref{dynamicsxn}} converges in distribution to $\mu^\star$.
\end{theo}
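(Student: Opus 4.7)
Write $\nu_n := \mathrm{Law}(X_n) \in \mathcal{P}(\MM)$; the goal is to prove $\nu_n \to \mu^\star$ weakly. Conditioning \eqref{dynamicsxn} on $\mathcal{F}_n$ gives, for every $f \in \mathcal{C}(\MM, \R)$,
\[
\nu_{n+1}(f) = \E[K_{\mu_n}f(X_n)] = \nu_n(Kf) + \E[\delta(X_n)\mu_n(f)].
\]
By Theorem \ref{theo1}, $\mu_n(f) \to \mu^\star(f)$ almost surely; dominated convergence (using boundedness of $\delta$ and $f$) then yields $\E[\delta(X_n)(\mu_n(f)-\mu^\star(f))] \to 0$, producing the approximate semigroup recursion
\[
\nu_{n+1}(f) = \nu_n(K_{\mu^\star}f) + o(1), \qquad n \to \infty. \qquad (\ast)
\]

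Compactness of $\mathcal{P}(\MM)$ (Prohorov on the compact $\MM$) gives a weak cluster point $\nu_{n_k} \to \nu_\infty$. Since $\delta = \1 - K\1$ is continuous by the Feller property of $K$, the kernel $K_{\mu^\star} = K + \delta \otimes \mu^\star$ is itself Feller; iterating $(\ast)$ by induction on $j$ produces $\nu_{n_k + j} \to \nu_\infty K_{\mu^\star}^j$ weakly for every fixed $j \geq 0$. By Theorem \ref{theo1} together with Lemma \ref{lemme1}, $\mu^\star$ is the unique invariant probability of $K_{\mu^\star}$, and the desired conclusion would follow from a uniform ergodicity statement of the form
\[
\bigl\|K_{\mu^\star}^N f - \mu^\star(f)\bigr\|_\infty \longrightarrow 0 \quad (N \to \infty), \qquad f \in \mathcal{C}(\MM, \R),
\]
since iterating $(\ast)$ $N$ times gives $\nu_{n+N}(f) = \nu_n(K_{\mu^\star}^N f) + o_n(1)$, so that
\[
\limsup_{n \to \infty} |\nu_n(f) - \mu^\star(f)| \leq \bigl\|K_{\mu^\star}^N f - \mu^\star(f)\bigr\|_\infty \xrightarrow[N \to \infty]{} 0.
\]

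The principal obstacle is precisely this uniform ergodicity of $K_{\mu^\star}$: the pointwise minorization $K_{\mu^\star}(x, \cdot) \geq \delta(x) \mu^\star(\cdot)$ is not uniform in $x$, as $\delta$ may vanish. The argument must iterate the kernel, combining the ``reset to $\mu^\star$'' mechanism with the accessibility of the cemetery ($\mathbf{H_2}$), the weak small set ($\mathbf{H_3}$), and the extinction-rate lower bound ($\mathbf{H_4}$), so as to upgrade the pointwise minorization into a uniform Doeblin-type condition on some iterate $K_{\mu^\star}^m$. This is expected to rely on essentially the same ingredients that establish the global attractivity of $\mu^\star$ for the ODE $\dot\mu = -\mu + \Pi_\mu$ in the proof of Theorem \ref{theo1}.
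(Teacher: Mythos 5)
Your first half is on target and in fact slightly more elementary than the paper's argument: you establish the same recursion $\nu_{n+1}(f)=\nu_n(K_{\mu^\star}f)+o_n(1)$ and then iterate it directly $N$ times to get
$\limsup_n|\nu_n(f)-\mu^\star(f)|\leq\|K_{\mu^\star}^Nf-\mu^\star(f)\|_\infty$,
whereas the paper invokes the asymptotic pseudo-trajectory framework (limit sets of pseudo-trajectories lie inside global attractors). Both are sound, and yours bypasses that machinery.

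The gap is that you flag the uniform ergodicity $\|K_{\mu^\star}^Nf-\mu^\star(f)\|_\infty\to0$ as the ``principal obstacle'' without actually proving it, and your sketch of what it would require is misleading. You suggest it should combine $\mathbf{H_2}$, $\mathbf{H_3}$ and $\mathbf{H_4}$ and mirror the global-attractivity argument for the ODE. In fact $\mathbf{H_3}$ and $\mathbf{H_4}$ play no role here: those hypotheses were only needed to prove $\mu_n\to\mu^\star$ a.s., which you have already used. The ergodicity of $K_{\mu^\star}$ follows from Lemma \ref{lem:expect}\textit{(i)} alone (hence from $\mathbf{H_1}$, $\mathbf{H_2}$): there are $N\in\N$ and $\delta_0>0$ with $\widebar{K}^N(x,\cim)\geq\delta_0$ for all $x$. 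Then, because the chain driven by $K_{\mu^\star}$ is redistributed according to $\mu^\star$ whenever the underlying $\widebar K$-chain is killed, and because $\mu^\star$ is $K_{\mu^\star}$-invariant, one has the exact identity
\[
K_{\mu^\star}^N(x,\cdot)=K^N(x,\cdot)+\widebar{K}^N(x,\cim)\,\mu^\star(\cdot),
\]
so that $\inf_{x\in\MM}K_{\mu^\star}^N(x,\cdot)\geq\delta_0\,\mu^\star(\cdot)$ is a genuine uniform Doeblin minorization, giving $\|\alpha K_{\mu^\star}^{kN}-\mu^\star\|_{\mathrm{TV}}\leq(1-\delta_0)^k\|\alpha-\mu^\star\|_{\mathrm{TV}}$ and hence exactly the statement you need. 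Note that no aperiodicity of $K$ is required; the killing-and-resurrection step supplies the mixing. So the obstacle you name is real, but it is simpler to resolve than your sketch suggests, and the pointwise bound $K_{\mu^\star}(x,\cdot)\geq\delta(x)\mu^\star(\cdot)$ you worry about vanishing is upgraded to a uniform bound simply by running for $N$ steps, not by appealing to the small-set or extinction-rate hypotheses.

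One minor remark: the subsequence extraction $\nu_{n_k}\to\nu_\infty$ and the claim $\nu_{n_k+j}\to\nu_\infty K_{\mu^\star}^j$ are never used in your final inequality; that paragraph can be deleted.
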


The two above results thus show that the algorithm both produces a way  to approximate $\mu^\star$ and also
to sample a random variable with this distribution. The convergence in law of $(X_n)_{n\ge0}$
may appear surprising due to the lack of aperiodicity assumption for $(Y_n)_{n\ge0}$. To overcome this problem,
 we prove in fact that $(X_n)_{n\ge0}$ gets asymptotically this property.


  The extinction rate $\Theta(\mu^\star)${, defined in \eqref{eq:defTheta},}  can be estimated through the same procedure. For this, we need to keep track of the times at which a "resurrection" occurs. We then construct $(X_n)$ as follows. Let  $((U_n, X_n))$ be a process adapted to $\{{\cal F}_n\},$ with $U_n \in \MM \cup \{\cim\}, X_n \in \MM,$ satisfying $X_0 = U_0 = x,$
$$\PE(U_{n+1}\in dy| {\cal F}_n)=\widebar{K}(X_n,dy)$$ and
$$X_{n+1}=V_{n+1}1_{\{U_{n+1}=\cim \} }+U_{n+1}1_{\{U_{n+1}\in \MM\}},$$
where  $(V_n)$ is a sequence of independent variables such that $V_{n+1}\sim \mu_n$, conditionally on {$\sigma(\mathcal{F}_{n},U_{n+1})$}. Clearly, $(X_n)$ satisfies (\ref{dynamicsxn}) and the  times at which $U_n=\cim$ are the "resurrection"  times (at which  $X_n$ is redistributed).

\begin{theo}[Extinction rate estimation]
\label{th:extinction}
Suppose that the assumptions of Theorem \ref{theo1} are satisfied. Then,
$$\widebar{\theta}_n:=\frac{1}{n}\sum_{k=1}^n \1_{\{U_k=\cim\}}\xrightarrow{n\rightarrow+\infty} 1- \Theta(\mu^\star).$$
\end{theo}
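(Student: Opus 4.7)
Set $N_k := \1_{\{U_k = \cim\}}$. By construction of the pair $(U_k,X_k)$, $\PE(U_k = \cim \mid \mathcal{F}_{k-1}) = \widebar{K}(X_{k-1},\{\cim\}) = \delta(X_{k-1})$, so $\xi_k := N_k - \delta(X_{k-1})$ is an $(\mathcal{F}_k)$-martingale difference with $|\xi_k|\le 1$. Azuma--Hoeffding together with Borel--Cantelli then give $\frac{1}{n}\sum_{k=1}^n \xi_k \to 0$ almost surely. Since $\delta = \1 - K\1$ and $\mu^\star(\1)=1$, we have $\mu^\star(\delta) = 1 - \Theta(\mu^\star)$, so the theorem reduces to
\[
\frac{1}{n}\sum_{k=1}^n \delta(X_{k-1}) \xrightarrow[n\to\infty]{a.s.} \mu^\star(\delta). \qquad (\star)
\]

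To establish $(\star)$ I would use the Poisson equation for the Markov kernel $K_{\mu^\star}$. Under Hypotheses~\ref{hypoK}--\ref{hypoH3}, $K_{\mu^\star}$ is Feller with unique invariant probability $\mu^\star$, and the small-set hypothesis $\mathbf{H_3}$ together with the lower bound $\mathbf{H_4}$ should yield uniform ergodicity of $K_{\mu^\star}$; the series $g := \sum_{n\ge 0}(K_{\mu^\star}^n\delta - \mu^\star(\delta))$ then defines a continuous, bounded function satisfying $g - K_{\mu^\star}g = \delta - \mu^\star(\delta)$. Using the identity $(K_{\mu^\star} - K_{\mu_{k-1}})g(x) = \delta(x)(\mu^\star - \mu_{k-1})(g)$ and the fact that $K_{\mu_{k-1}}g(X_{k-1}) = \ES[g(X_k)\mid \mathcal{F}_{k-1}]$, one rewrites
\[
\delta(X_{k-1}) - \mu^\star(\delta) = \bigl[g(X_{k-1}) - g(X_k)\bigr] + \bigl[g(X_k) - K_{\mu_{k-1}}g(X_{k-1})\bigr] + \delta(X_{k-1})\,(\mu_{k-1}-\mu^\star)(g).
\]
Averaging over $k=1,\dots,n$: the telescoping first piece is $O(1/n)$; the second is a bounded $(\mathcal{F}_k)$-martingale increment whose Cesàro mean vanishes by the same martingale SLLN as for $\xi_k$; and the third is bounded by $|(\mu_{k-1}-\mu^\star)(g)|$, which tends to $0$ almost surely by Theorem~\ref{theo1} and continuity of $g$, so its Cesàro mean vanishes as well. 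Combining the three yields $(\star)$, and with the first step completes the proof.

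The genuine technical point is producing the continuous bounded function $g$, i.e.\ verifying uniform ergodicity of $K_{\mu^\star}$. This should however be an immediate by-product of the analysis of Section~\ref{sect:ODE}, since proving that $\mu^\star$ is a global attractor of the ODE~\eqref{eq:ODEE} already requires quantitative contraction estimates on the family $(K_\mu)_\mu$ near $\mu^\star$. If one prefers to bypass Poisson's equation altogether, an alternative is to show that the unweighted empirical measure $\tilde\mu_n := \frac{1}{n}\sum_{k=0}^{n-1}\delta_{X_k}$ is itself an asymptotic pseudo-trajectory of~\eqref{eq:ODEE} -- a variant of the argument used for $\mu_n$ in Theorem~\ref{theo1} goes through, since the difference between the weighted and unweighted recursions contributes a remainder controlled by $\gamma_n\log n \to 0$ -- and to deduce $\tilde\mu_n(\delta) \to \mu^\star(\delta)$ from the Feller continuity of $\delta$ and the global attractiveness of $\mu^\star$.
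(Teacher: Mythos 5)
Your proof is correct, and the first step (isolating the martingale $M_n=\sum_{k=1}^n(\1_{\{U_k=\cim\}}-\delta(X_{k-1}))$ and applying the martingale SLLN) is exactly what the paper does. Where you diverge — and improve on the paper — is in the second step. After subtracting the martingale part, what remains is the \emph{unweighted} Ces\`aro average $\frac1n\sum_{k=0}^{n-1}\delta(X_k)$, whereas Theorem~\ref{theo1} gives almost sure convergence of the \emph{weighted} occupation measure $\mu_n=\bigl(\sum_{k\le n}\eta_k\bigr)^{-1}\sum_{k\le n}\eta_k\delta_{X_k}$. The paper simply writes the remaining term as $\mu_n(\delta)$ and invokes $\mu_n(\delta)\to\mu^\star(\delta)$; this is exact when $\eta_k\equiv 1$ (i.e.\ $\gamma_n=1/(n+1)$) but not for a general sequence satisfying Hypothesis~\ref{hypogain}. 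You flag this correctly as the genuine content of the statement.

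Your Poisson-equation route to $(\star)$ is sound. The solvability of the Poisson equation for $K_{\mu^\star}$ with a bounded continuous solution $g$ is indeed available: the paper itself proves a Doeblin minorization $\inf_x K^N_{\mu^\star}(x,\cdot)\ge\delta_0\,\mu^\star(\cdot)$ in the proof of Theorem~\ref{th:cvmarche} (inequality~\eqref{eq:erggeom}), giving geometric TV ergodicity of $K_{\mu^\star}$, and Feller continuity of $K$ makes each $K^n_{\mu^\star}\delta$ continuous so that the geometrically convergent series $g=\sum_{n\ge0}(K_{\mu^\star}^n\delta-\mu^\star(\delta))$ is continuous and bounded. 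Your three-term decomposition (telescoping part, martingale increment $g(X_k)-K_{\mu_{k-1}}g(X_{k-1})$, and the perturbation $\delta(X_{k-1})(\mu_{k-1}-\mu^\star)(g)$) is exactly the right identity, and the last piece indeed vanishes in Ces\`aro mean because $\mu_{k-1}(g)\to\mu^\star(g)$ a.s.\ by Theorem~\ref{theo1} together with continuity of $g$. Your alternative suggestion (show that the uniform-weight occupation measure is itself an asymptotic pseudo-trajectory of \eqref{eq:ODEE}) would also work and is closer in spirit to the machinery of Section~\ref{sect:APT}. In short: same starting decomposition as the paper, but you correctly noticed that passing from the weighted to the unweighted Ces\`aro average needs an argument the paper glosses over, and you supplied one.
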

\begin{proof}
 Since $\PE(U_{n+1}=\partial |{\cal F}_n)=\delta(X_n)$ , we can decompose $\widebar{\theta}_n$ as
$$\widebar{\theta}_n=\frac{M_n}{n}+{\mu}_{n}(\delta) $$
where $(M_n)$ is the martingale defined by $M_n=\sum_{k=1}^n (1_{\{U_k=\cim\}}-\delta(X_{k-1})).$ Since the increments of $(M_n)$ are uniformly bounded, $\langle M\rangle_n\le C n$ and it  follows from the strong law of large numbers for martingales, that $\frac{M_n}{n}\rightarrow 0$ $a.s.$ as $n\rightarrow+\infty$.
On the other hand, ${\mu}_{n}(\delta)\xrightarrow{n\rightarrow+\infty}\mu^\star(\delta)= 1-
\Theta (\mu^\star) \ a.s.$ This ends the proof.
\end{proof}

\section{Examples and applications}
\label{sect:appli}

\subsection{Finite Markov Chains}
\label{sec:finite}
In this entire subsection, we consider the simple situation where   $\MM$ is a finite set in order to discuss our main assumptions.

We use the notation $$K(x,y) = K(x,\{y\}),\quad \forall \;x,y \in \MM.$$

We say that $x$ leads to $y$, written $x \hookrightarrow y$, if $\sum_{n \geq 0} K^n(x,y) > 0.$ If $A, B \subset \MM$ we write $A \hookrightarrow B$ whenever there exist $x \in A$ and $y \in B$ such that $x \hookrightarrow y.$

Kernel $K$ is called {\em indecomposable} if there exists $x_0 \in \MM$ such that $x \hookrightarrow x_0$ for all $x \in \MM,$ and {\em irreducible} if $x \hookrightarrow y$ for all $x, y \in \MM.$

Note that Hypothesis $\mathbf{H_1}$ is automatically satisfied (endow $\MM$ with the discrete topology) and that $\mathbf{H_3}$ is equivalent to indecomposability (choose $U = \{x_0\}$ and $\Psi = \delta_{x_0}$). From now on, we investigate separately the irreducible and non-irreducible cases.\medskip

\noindent \textbf{Irreducible setting.} When  $K$ is irreducible Hypothesis $\mathbf{H_4}$ holds with $C(n) = c > 0.$ This follows from the following lemma.
\begin{lem}
\label{lem:hypo4fini}
There exists $c > 0$ such that
$$K^n \1(x)=K^n(x,\MM) \geq c K^n (y, \MM)=c K^n \1(y)$$ for all $n \in \N$ and $x,y \in \MM$ such that $x \hookrightarrow y.$
\end{lem}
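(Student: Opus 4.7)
The plan is to exploit the finiteness of $\MM$ to reduce the uniform bound to a pointwise bound for each pair $(x,y)$ with $x\hookrightarrow y$, and then take the minimum over the finitely many such pairs.

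So, fix such a pair and let $m=m(x,y)\ge 0$ be the smallest integer with $K^m(x,y)>0$; set $\alpha:=K^m(x,y)>0$. I would split according to whether $n\ge m$ or $n<m$. For $n\ge m$, the Chapman--Kolmogorov decomposition gives
$$K^n\1(x) \;=\; \sum_{z\in\MM} K^m(x,z)\,K^{n-m}\1(z)\;\ge\;\alpha\, K^{n-m}\1(y).$$
Because $K$ is sub-Markovian, $K\1\le\1$ pointwise, hence the sequence $(K^k\1(y))_{k\ge 0}$ is non-increasing; in particular $K^{n-m}\1(y)\ge K^n\1(y)$, giving $K^n\1(x)\ge \alpha\, K^n\1(y)$.

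For the range $0\le n<m$, the existence of a length-$m$ path $x=x_0,x_1,\dots,x_m=y$ with $K(x_{i-1},x_i)>0$ (which is what $K^m(x,y)>0$ encodes in the finite setting) shows that $K^n\1(x)\ge \prod_{i=1}^{n}K(x_{i-1},x_i)>0$ for every $n\le m$. Using the trivial bound $K^n\1(y)\le 1$, the constant
$$c(x,y)\;:=\;\min\Bigl(\alpha,\;\min_{0\le n<m}K^n\1(x)\Bigr)\;>\;0$$
then satisfies $K^n\1(x)\ge c(x,y)\,K^n\1(y)$ for every $n\in\N$.

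Finally, since $\MM$ is finite, the set of pairs $\{(x,y)\in\MM\times\MM:\, x\hookrightarrow y\}$ is finite, so $c:=\min_{x\hookrightarrow y}c(x,y)$ is a strictly positive constant doing the job uniformly. The only minor obstacle is the small-$n$ regime $n<m(x,y)$, where the Chapman--Kolmogorov trick does not yet apply; this is handled cleanly by the path-product lower bound noted above, which is essentially free once one has chosen $m$ to be minimal.
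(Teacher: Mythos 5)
Your proof is correct. It uses the same two ingredients as the paper --- Chapman--Kolmogorov to route through $y$, and the monotonicity $K^{k+1}\1\le K^k\1$ coming from sub-Markovianity --- but in the opposite order, which forces a case split. The paper first writes $K^n\1(x)\ge K^{n+r}\1(x)$ (monotonicity) and \emph{then} decomposes $K^{n+r}=K^r K^n$, so that $K^n\1(x)\ge K^r(x,y)\,K^n\1(y)$ holds for \emph{every} $n\ge 0$ at once, with no small-$n$ regime to worry about; combined with the uniform bound $K^r(x,y)\ge c_1^{|\MM|-1}$ (where $c_1=\min\{K(x,y):K(x,y)>0\}$ and $r\le|\MM|-1$), this gives the single explicit constant $c=c_1^{|\MM|-1}$ directly, without any minimization over pairs. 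You instead decompose $K^n=K^m K^{n-m}$ first and apply monotonicity to $K^{n-m}\1(y)$, which only works for $n\ge m$, so you need the separate (and entirely valid) path-product argument for $n<m$, followed by a minimum over the finitely many pairs. Both arguments are sound; the paper's is a touch cleaner because the "push forward then split" ordering eliminates the edge case and produces a uniform constant in one stroke.
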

\begin{proof} Let $c_1 = \min \{K(x,y) : K(x,y) > 0\}.$ \tcr{If $x \hookrightarrow y,$ then the path which links $x$ and $y$ has at most $|\MM|-1$ steps and hence,
$$\exists r_y\in\{1,\ldots,|\MM|-1\}\quad\textnormal{such that}\quad K^{r_y}(x,y)\ge c_1^{r_y}\ge c_1^{|\MM|-1}.$$ 
From the relation
$$K^n(x,\MM) \geq K^{n+r_y}(x,\MM) = \sum_y K^{r_y}(x,y) K^n(y,\MM)$$
it comes that $$ K^n(x,\MM) \geq c_1^{|\MM|-1} K^n(y,\MM).$$ This proves the result with $c =c_1^{|\MM|-1}.$} 
\end{proof}
{As a consequence, except for the rate of convergence,  we retrieve \cite[Theorem 1.2]{BC15} }({see also \cite{AFP,BGZ} for the convergence result in the case $\gamma_n=\frac{1}{n+1}$).}
\begin{theo}
Suppose $K$ is irreducible and $K(x_0,\MM) < 1$ for some $x_0 \in \MM.$ Then $K$ has a unique QSD $\mu^\star$ and under Hypothesis \ref{hypogain}, $(\mu_n)$ converges almost surely to $\mu^\star.$
\end{theo}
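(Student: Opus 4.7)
The plan is to deduce this theorem as a direct application of Theorem \ref{theo1}, by verifying that the standing assumptions \ref{hypoK} and \ref{hypoH3} are satisfied in the finite irreducible setting. Hypothesis \ref{hypogain} is granted, so the work reduces to checking $\mathbf{H_1}$--$\mathbf{H_4}$.

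First I would endow $\MM$ with the discrete topology, which makes every function continuous and immediately gives $\mathbf{H_1}$ (Feller). For $\mathbf{H_2}$, I would argue that since $K(x_0,\MM)<1$, the definition of $\widebar{K}$ yields $\widebar{K}(x_0,\{\partial\})>0$; by irreducibility every $x\in\MM$ satisfies $x\hookrightarrow x_0$, so $\sum_{n\ge1}\widebar{K}^n(x,\{\partial\})>0$ for all $x$, proving that $\{\partial\}$ is accessible.

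For $\mathbf{H_3}$, following the hint in the text, I would choose $U=\{x_0\}$ (open in the discrete topology) and $\Psi=\delta_{x_0}$. Accessibility of $U$ is again immediate from irreducibility. The weak small set condition $\sum_{n\ge1}K^n(x_0,\cdot)\ge\varepsilon\,\delta_{x_0}$ reduces to $\sum_{n\ge1}K^n(x_0,x_0)>0$, which holds since $x_0\hookrightarrow x_0$ by irreducibility; pick $\varepsilon$ to be any such positive term.

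Finally, to get $\mathbf{H_4}$ I would invoke Lemma \ref{lem:hypo4fini}: since $K$ is irreducible, $x_0\hookrightarrow y$ for every $y\in\MM$, hence $K^n\1(x_0)\ge c\,K^n\1(y)$ for all $y$ and $n$, where $c=c_1^{|\MM|-1}>0$. Taking the supremum over $y$ gives
$$\frac{\Psi(K^n\1)}{\sup_{y\in\MM}K^n\1(y)}=\frac{K^n\1(x_0)}{\sup_{y\in\MM}K^n\1(y)}\ge c,$$
so $\mathbf{H_4}$ holds with the constant function $C\equiv c$, which is non-increasing, convex, and has infinite integral on $\R^+$. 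All hypotheses of Theorem \ref{theo1} being verified, the unique QSD $\mu^\star$ exists and $(\mu_n)$ converges to it almost surely. I do not anticipate a real obstacle here; the only mildly delicate point is remembering that Lemma \ref{lem:hypo4fini} is one-sided (it requires $x\hookrightarrow y$), which is precisely why full irreducibility, rather than mere indecomposability, is used to get the uniform lower bound $C(n)\ge c>0$ rather than just a summable one.
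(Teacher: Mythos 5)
Your proposal is correct and follows essentially the same route as the paper, which states this theorem as an immediate consequence of the preceding discussion (H$_1$ via the discrete topology, H$_3$ with $U=\{x_0\}$ and $\Psi=\delta_{x_0}$, H$_4$ with constant $C$ from Lemma \ref{lem:hypo4fini}), and then invokes Theorem \ref{theo1}. The only cosmetic quibble: in your H$_3$ verification, $x_0\hookrightarrow x_0$ holds trivially via $n=0$, so the real content is that irreducibility gives a return to $x_0$ in $n\ge 1$ steps (leave to some $y\ne x_0$ and come back, or use $K(x_0,\MM)>0$ if $|\MM|=1$); once rephrased this way the argument is airtight.
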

\label{sect:exe}
\subsection*{Bottleneck effect and  condition $\mathbf{H_4}$}
Here we  discuss an example demonstrating the necessity of condition $\mathbf{H_4}$ for non irreducible chains.
 Note that this example  can also be understood as a benchmark of
 more general processes admitting several QSDs such as  general indecomposable  Markov chains.

Suppose $\MM = \MM_1 \cup \MM_2$ where $\MM_1$ and $\MM_2$ are nonempty disjoint sets such that
\begin{enumerate}
\item $\forall x,y \in \MM_i \: x \hookrightarrow y;$
\item $\MM_1 \hookrightarrow \MM_2;$
\item $\MM_2 \not \hookrightarrow \MM_1;$
\item $\MM_2 \hookrightarrow \cim,$ (that is $\exists x \in \MM_2 \:  K(x,\MM) < 1$) and $\MM_1 \not \hookrightarrow \cim.$
\end{enumerate}
Let $K_i$ be the kernel $K$ restricted to $\MM_i.$ That is  $$K_i  = (K(x,y))_{x, y \in \MM_i}.$$ Let $\mu^\star_i$ be the (unique) QSD of $K_i$ and $\Theta_i$ the associated extinction rate. Note that, by irreducibility of $K_i,$ and the Perron Frobenius Theorem, $\Theta_i$ is nothing but the spectral radius of $K_i.$

We consider $\mu^\star_i$ as an element of ${\cal P}(\MM)$ by identifying ${\cal P}(\MM_i)$ with the set of $\mu \in {\cal P}(\MM)$ supported by $\MM_i.$

As previously noticed, $\mathbf{H_1}, \mathbf{H_2}$ and $\mathbf{H_3}$ are always true. However, assumption $\mathbf{H_4}$ might fail to hold. More precisely, we have the following result.
\begin{prop}[Sharpness of $\mathbf{H_4}$]
\label{th:sharpH4}
Condition $\mathbf{H_4}$ holds if and only if $\Theta_1 \leq \Theta_2.$ In this case the unique QSD of $K$ is $\mu^\star_2$ and, under Hypothesis \ref{hypogain} $\mu_n \rightarrow \mu^\star_2.$
\end{prop}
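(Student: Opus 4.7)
The strategy is to combine Perron--Frobenius asymptotics for the two irreducible blocks $K_1, K_2$ with an algebraic identification of the QSDs of $K$, and then invoke Theorem \ref{theo1}. Let $\mu_i^\star$ and $h_i > 0$ denote the left and right Perron eigenvectors of $K_i$ with eigenvalue $\Theta_i$. Since $\MM_2 \not\hookrightarrow \MM_1$, once the chain enters $\MM_2$ it cannot return, so for $y \in \MM_2$, $K^n \1(y) = K_2^n \1(y) \asymp \Theta_2^n h_2(y)$. For $x \in \MM_1$, decomposing on the first entry time of the chain into $\MM_2$,
\begin{equation*}
K^n \1(x) \;=\; K_1^n \1_{\MM_1}(x) \;+\; \sum_{k=1}^n \sum_{y \in \MM_1,\, z \in \MM_2} K_1^{k-1}(x,y)\, K(y,z)\, K_2^{n-k} \1(z),
\end{equation*}
and applying Perron--Frobenius estimates to both factors, the second summand is of order $\sum_{k=1}^n \Theta_1^{k-1} \Theta_2^{n-k}$. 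Summing this geometric-like series yields
\begin{equation*}
\sup_{x \in \MM} K^n \1(x) \;\asymp\; \begin{cases} \Theta_2^n & \text{if } \Theta_1 < \Theta_2, \\ n\,\Theta^n & \text{if } \Theta_1 = \Theta_2 =: \Theta, \\ \Theta_1^n & \text{if } \Theta_1 > \Theta_2. \end{cases}
\end{equation*}

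Next I verify $\mathbf{H_4}$. Because $\MM_2 \not\hookrightarrow \MM_1$, any accessible small set $U$ must intersect $\MM_2$; choosing $x \in U \cap \MM_2$ in the small-set inequality $\sum_{n\ge 1} K^n(x, \cdot) \geq \epsilon \Psi$ forces $\mathrm{supp}(\Psi) \subset \MM_2$, so $\Psi(K^n \1) \asymp \Theta_2^n$. The ratio $\Psi(K^n \1)/\sup_x K^n \1(x)$ is therefore of order $1$, $1/n$, or $(\Theta_2/\Theta_1)^n$ in the three cases. Any non-increasing convex $C : \R^+ \to \R^+$ with $\int_0^\infty C = \infty$ cannot decay at a geometric rate (geometric sums being finite), so $\mathbf{H_4}$ holds if and only if $\Theta_1 \leq \Theta_2$, with $C(n) = c$ or $C(n) = c/(n+1)$ serving as admissible choices.

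For the QSD identification when $\Theta_1 \le \Theta_2$, I write any QSD as $\mu = \alpha \nu_1 + (1-\alpha) \nu_2$ with $\nu_i \in \mathcal{P}(\MM_i)$ and $\mu K = \Theta \mu$. Projecting onto $\MM_1$ gives $\alpha \nu_1 K_1 = \Theta \alpha \nu_1$; hence if $\alpha > 0$ then $\Theta = \Theta_1$ and $\nu_1 = \mu_1^\star$ by Perron--Frobenius uniqueness. Projecting onto $\MM_2$ and pairing with $h_2$ yields
\begin{equation*}
\alpha\, \lambda(h_2) \;=\; (1-\alpha)(\Theta_1 - \Theta_2)\, \nu_2(h_2), \qquad \lambda(A) := \int_{\MM_1} K(x, A)\, \mu_1^\star(dx).
\end{equation*}
The left-hand side is strictly positive (since $\mu_1^\star > 0$ on $\MM_1$ and $\MM_1 \hookrightarrow \MM_2$) while the right-hand side is $\le 0$ when $\Theta_1 \le \Theta_2$, forcing $\alpha = 0$ and $\mu = \mu_2^\star$. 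Since Hypotheses $\mathbf{H_1}$--$\mathbf{H_4}$ all hold, Theorem \ref{theo1} then delivers $\mu_n \to \mu_2^\star$ a.s.

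The principal obstacle is the asymptotic analysis of $K^n \1$ on $\MM_1$, in particular the resonant case $\Theta_1 = \Theta_2$ where the polynomial factor $n$ appears in $\sup_x K^n \1(x)$; this factor is precisely what distinguishes the sharp condition $\mathbf{H_4}$ from the stronger uniform lower bound $C \geq c > 0$ used in \cite{CV14}. Periodicity of $K_1$ or $K_2$ does not affect the conclusion, since the required estimates survive as uniform upper and $\liminf$ lower bounds along the relevant subsequences.
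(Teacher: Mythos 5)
Your proof is correct, and it reaches the same three-case estimate for $\sup_x K^n\1(x)$ (namely $\Theta_2^n$, $n\Theta^n$, $\Theta_1^n$) as the paper, but by a genuinely different route. Where the paper works probabilistically --- conditioning $\PE_x(\tau_2 > n)$ on the entry time $\tau_1$ into $\MM_2$, then performing an Abel-summation/integration-by-parts manipulation on the tail probabilities, using only the two-sided bound (\ref{eq:uniformtheta}) --- you instead expand $K^n\1$ via the block structure and apply Perron--Frobenius asymptotics directly. Both yield the same dichotomy. The more substantive divergence is in the converse direction: when $\Theta_1 > \Theta_2$, the paper infers that $\mathbf{H_4}$ fails indirectly, by invoking Theorem \ref{th:2points} to produce a second QSD (which is incompatible with $\mathbf{H_4}$ via the global-attractor machinery of Proposition \ref{prop:GAS3}); you instead show directly that the ratio $\Psi(K^n\1)/\sup_x K^n\1(x) \asymp (\Theta_2/\Theta_1)^n$, and that no non-increasing convex $C$ with $\int_0^\infty C = \infty$ can be dominated by a geometric sequence. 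Your route is more self-contained and avoids the forward reference to Theorem \ref{th:2points}, which is otherwise slightly awkward since that theorem comes later in the section. You also give an explicit algebraic identification of the unique QSD via the pairing $\alpha\,(\mu_1^\star K_{12})(h_2) = (1-\alpha)(\Theta_1 - \Theta_2)\,\nu_2(h_2)$, whereas the paper leaves uniqueness to the general theory (Lemma \ref{lemme1}, Proposition \ref{prop:GAS3}). Two minor imprecisions that do not affect correctness: (a) in the resonant case $\Theta_1 = \Theta_2$ only the upper bound $\sup_x K^n\1(x) \lesssim n\Theta^n$ is needed for $\mathbf{H_4}$ (the matching lower bound on the middle sum requires a small extra argument that you do not actually use); (b) your closing remark about periodicity is unnecessary caution --- the two-sided bound (\ref{eq:uniformtheta}) already holds for every $n$ without aperiodicity, so no passage to $\liminf$ along subsequences is ever needed.
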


\begin{proof} Fix $x_0 \in \MM_2$ and let $\Psi = \delta_{x_0}.$ Hence, ${\bf H_1}$, ${\bf H_2}$ and  ${\bf H_3}$ hold.
By Lemma \ref{lem:hypo4fini} there exists $c > 0$ such that $\Psi (K^n \1) = K^n \1 (x_0) \geq c K^n \1(x)$ for all $x \in \MM_2$ and $n\ge0$. Thus $\mathbf{H_4}$ is equivalent to
\begin{equation}
\label{eq:H4finite}
\forall x   \in \MM_1, \quad K^n \1 (x_0) \geq C(n) K^n \1 (x)
\end{equation}
{with $C$ satisfying \eqref{eq:condC1}}. Let $\tau_1 = \min \{n \geq 0 \: : Y_n \not \in \MM_1\}$ and $\tau_2 = \min \{n \geq 0 \: : Y_n = \cim \}.$
By Lemma \ref{lem:hypo4fini} applied to each of the kernel $K_i$, and from the relation  $\Theta_i^n = \mu_i K_i^n \1_{\MM_i},$  we get that
for all $x \in \MM_i$
\begin{equation}
\label{eq:uniformtheta}
\frac{1}{c} \Theta_i^n \geq \PE_x(\tau_i > n) \geq c \Theta_i^n
\end{equation} for some $c > 0.$
Thus for all $x \in \MM_1$,
$$\PE_x(\tau_2 > n) = \E_x\left[\PE(\tau_2 > n| {\cal F}_{\tau_1})\right] = \E_x \left[\PE_{Y_{\tau_1}}(\tau_2 > n -\tau_1)\right] $$
$$ \leq \frac{1}{c} \E_{x} \left[\Theta_2^{n-\tau_1} \1_{\tau_1 \leq n} + \1_{\tau_1 > n}\right]$$
by (\ref{eq:uniformtheta}). Thus,
$$\PE_x(\tau_2 > n) \leq \frac{1}{c} \Theta_2^n  \sum_{k = 1}^n \Theta_2^{-k} (\PE_{x}(\tau_1 > k-1) - \PE_{x}(\tau_1 > k))  + \frac{1}{c} \PE_{x}(\tau_1 > n)$$
 $$= \frac{1}{c} \Theta_2^n (\Theta_2^{-1} + \sum_{k = 1}^{n-1} (\Theta_2^{-k-1} - \Theta_2^{-k}) \PE_{x}(\tau_1 > k)) $$
 $$= \frac{1}{c} \Theta_2^{n-1} (1 + (1-\Theta_2) \sum_{k = 1}^{n-1} \Theta_2^{-k} \PE_{x}(\tau_1 > k))$$
 Then,  by (\ref{eq:uniformtheta}) again, we get
 $$\PE_x(\tau_2 > n)  \leq O( \Theta_2^{n-1}) =  O( \PE_{x_0} (\tau_2 > n))$$ when $\Theta_1 < \Theta_2$ and
 $$\PE_x(\tau_2 > n)  \leq O( \Theta_2^{n-1} (1 + n)) = O(  \PE_{x_0} (\tau_2 > n) (1 + n))$$ when $\Theta_2 = \Theta_1.$ This proves that (\ref{eq:H4finite})  holds with $C(t) = C$  when $\Theta_1 < \Theta_2$ and $C(t) = C/(1+t)$ when $\Theta_1 = \Theta_2.$ If now $\Theta_1 > \Theta_2,$ it follows from Theorem \ref{th:2points} below that  another QSD $\mu^\star \neq \mu^\star_2$ exists, hence $\mathbf{H_4}$ fails.

\end{proof}
 For $\Theta_1 \leq \Theta_2,$ $\mu_2^\star$ is a global attractor of the dynamics induced by (\ref{eq:ODEE}), but
  when $\Theta_1$ exceeds $\Theta_2$  a {\em transcritical bifurcation} occurs:   $\mu_2^\star$ becomes a saddle point whose stable manifold is ${\cal P}(\MM_2),$  while there is another linearly stable point $\mu^\star$  whose basin of attraction  is ${\cal P}(\MM) \setminus {\cal P}(\MM_2).$

  This behavior will be shown in section \ref{sec:PMR} and combined with standard techniques from stochastic approximation, it will be used to prove the following result.
\begin{theo}[{Behavior of the algorithm without Assumption $\mathbf{H_4}$}]
\label{th:2points}
 Suppose $\Theta_1 > \Theta_2.$ Then there is another QSD  $\mu^\star$ having full support (i.e~$\mu^\star(x) > 0$ for all $x \in \MM$). Under  Hypothesis \ref{hypogain},
\begin{itemize}
\item[(i)]  $(\mu_n)_{n\geq 0}$ converges almost surely to $\mu_\infty\in\{\mu_2^\star,\mu^\star\}$.
\item[(ii)] If $X_0 \in \MM_2,$ $X_n \in \MM_2$ for all $n$ and {$\mu_\infty = \mu_2^\star$} with probability one.
\item[(iii)] If $X_0 \in \MM_1,$ the event  $\{\mu_\infty=\mu^\star\}$ has positive probability.
\item[(iv)] If $\sum_n\prod_{i=1}^n (1-\gamma_i)<+\infty$,  the event $\{ \exists N \in \N : X_n \in \MM_2\, \mbox{ for all } n \geq N \}$ has positive probability, and on this event $\mu_\infty=\mu_2^\star.$
\end{itemize}
\end{theo}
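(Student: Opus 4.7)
The plan splits into an algebraic construction of $\mu^\star$, a dynamical analysis of the ODE $\dot\mu=-\mu+\Pi_\mu$, and the stochastic-approximation transfer.

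\textbf{Construction of $\mu^\star$.} The hypotheses $\MM_2\not\hookrightarrow\MM_1$ and $\MM_1\not\hookrightarrow\partial$ force $K(x,\MM_1)=0$ for all $x\in\MM_2$ and $K(x,\MM)=1$ for all $x\in\MM_1$, so $K$ is block-triangular with diagonal blocks $K_1$, $K_2$ and off-diagonal block $K_{12}(x,A):=K(x,A\cap\MM_2)$ defined for $x\in\MM_1$. Decomposing any probability as $\mu=\alpha\mu^{(1)}+(1-\alpha)\mu^{(2)}$ with $\mu^{(i)}\in\mathcal{P}(\MM_i)$, the eigenvalue equation $\mu K=\Theta\mu$ decouples into $\alpha\mu^{(1)}K_1=\alpha\Theta\mu^{(1)}$ and $\alpha\mu^{(1)}K_{12}+(1-\alpha)\mu^{(2)}K_2=(1-\alpha)\Theta\mu^{(2)}$. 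The case $\alpha=0$ recovers $\mu_2^\star$. For $\alpha>0$, Perron--Frobenius applied to the irreducible $K_1$ forces $\Theta=\Theta_1$ and $\mu^{(1)}=\mu_1^\star$; the second equation then reads $(1-\alpha)\mu^{(2)}(\Theta_1 I-K_2)=\alpha\mu_1^\star K_{12}$, uniquely solvable because $\Theta_1>\Theta_2=\rho(K_2)$ makes $\Theta_1 I-K_2$ invertible with nonnegative Neumann inverse $\sum_{k\ge 0}\Theta_1^{-k-1}K_2^k$. Since $\MM_1\hookrightarrow\MM_2$ and $K_2$ is irreducible, $\mu^{(2)}$ is strictly positive on $\MM_2$, and a suitable choice of $\alpha\in(0,1)$ yields a probability measure $\mu^\star$ of full support.

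\textbf{Items (i)--(ii).} Combined with the previous step, $\mathrm{Fix}(\Pi)=\{\mu_2^\star,\mu^\star\}$, so the ODE $\dot\mu=-\mu+\Pi_\mu$ has exactly these two equilibria. The face $\mathcal{P}(\MM_2)$ is invariant (since $K_\mu(x,\MM_1)=\delta(x)\mu(\MM_1)=0$ whenever $x\in\MM_2$ and $\mu\in\mathcal{P}(\MM_2)$) and within it the dynamics reduce to the irreducible algorithm on $\MM_2$, which attracts to $\mu_2^\star$. Linearizing $\mu\mapsto\Pi_\mu-\mu$ at $\mu_2^\star$ along perturbations carrying mass in $\MM_1$ yields a positive eigenvalue of order $\Theta_1/\Theta_2-1>0$, identifying $\mu_2^\star$ as a saddle with stable manifold exactly $\mathcal{P}(\MM_2)$; a Lyapunov/comparison argument then shows that every trajectory off this face is attracted globally to $\mu^\star$. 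The APT machinery of the preceding sections thus forces the limit set of $(\mu_n)$ to be an internally chain-transitive invariant set, hence one of the two fixed points, giving (i). Item (ii) follows by induction: $X_0\in\MM_2$ gives $\mu_0\in\mathcal{P}(\MM_2)$, and the invariance just noted propagates $X_n\in\MM_2$ and $\mu_n\in\mathcal{P}(\MM_2)$ for every $n$, so the irreducible case on $K_2$ applies.

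\textbf{Items (iii)--(iv) and main obstacle.} By (i), item (iii) reduces to proving $\PE(\mu_\infty=\mu^\star\mid X_0\in\MM_1)>0$. Since $\delta_{X_0}\in\mathcal{P}(\MM_1)$ lies strictly off the stable manifold $\mathcal{P}(\MM_2)$ and therefore inside the basin of the stable equilibrium $\mu^\star$, a standard local-attractor lemma for stochastic approximations with small steps $\gamma_n\to 0$ provides positive probability that $(\mu_n)$ enters a prescribed neighborhood of $\mu^\star$ contained in its basin, after which the APT property forces $\mu_n\to\mu^\star$. For (iv), on the event $E_N=\{X_n\in\MM_2\text{ for all }n\ge N\}$ the recursion (\ref{eq:mupasapas}) collapses to $\mu_{n+1}(\MM_1)=(1-\gamma_{n+1})\mu_n(\MM_1)$, so $\mu_n(\MM_1)\le\mu_N(\MM_1)\prod_{i=N+1}^n(1-\gamma_i)$ and the one-step exit probabilities $\PE(X_{n+1}\in\MM_1\mid\mathcal{F}_n)=\delta(X_n)\mu_n(\MM_1)$ are summable along $E_N$ under the hypothesis on $(\gamma_n)$. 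A conditional Borel--Cantelli bound, after choosing $N$ large so that $\{X_N\in\MM_2,\,\mu_N(\MM_1)\le\varepsilon\}$ has positive probability, yields $\PE(E_N)>0$, and on $E_N$ the decay $\mu_n(\MM_1)\to 0$ combined with (i) forces $\mu_\infty=\mu_2^\star$. The main technical obstacle is the ODE analysis behind (i): verifying that $\mu_2^\star$ is a genuine saddle with stable manifold exactly $\mathcal{P}(\MM_2)$ and establishing global attraction to $\mu^\star$ off that face; once this and the noise geometry are secured, the local-attractor argument for (iii) and the Borel--Cantelli estimate for (iv) are fairly routine.
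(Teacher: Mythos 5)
The construction of the second QSD $\mu^\star$ is correct and parallels the paper's: both rest on the block--triangular form of $K$ and Perron--Frobenius applied to the irreducible $K_1$; your Neumann-series inversion of $\Theta_1 I - K_2$ makes the off-diagonal part explicit where the paper just invokes the non-irreducible Perron--Frobenius theorem. Your treatment of items (ii), (iii), (iv) also tracks the paper's line closely: (ii) is invariance of $\mathcal{P}(\MM_2)$ plus the irreducible case, (iii) is attainability of $\mu^\star$ and a positive-probability convergence result for stochastic approximation (the paper cites \cite[Theorem 7.3]{B99}), (iv) is a product/Borel--Cantelli lower bound on staying in $\MM_2$, and the two formulations are equivalent in content.

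The genuine gap is in item (i). You correctly identify the crucial claim — that $\mu_2^\star$ is a saddle whose stable manifold is exactly $\mathcal{P}(\MM_2)$ and that every trajectory of $\dot\mu = -\mu + \Pi_\mu$ starting off this face converges to $\mu^\star$ — but your proposed route (linearize $\mu \mapsto \Pi_\mu - \mu$ at $\mu_2^\star$, then ``a Lyapunov/comparison argument shows global attraction'') is never carried out and would not by itself be sufficient: a local linearization yields a positive unstable eigenvalue but does not preclude other $\omega$-limit behavior (periodic orbits, heteroclinic cycles) for the nonlinear flow, so ``global attraction to $\mu^\star$ off the face'' is not established. Without that, Corollary \ref{cor:limset} cannot be invoked to pin the limit set down to one of the two equilibria, and both (i) and (iii) are left open. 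The paper closes this gap by a different mechanism: it uses the explicit time-changed flow $\widetilde\Phi_t(\mu) = \mu e^{t\Aa} / \mu e^{t\Aa}\1$ from Section \ref{sect:ODE}, decomposes $\mathcal{M}(\MM) = \R\mu^\star \oplus (f^*)^\perp$ where $f^*$ is the right Perron eigenvector of $K$ supported on $\MM_1$, and shows that the induced operator $\Aa^\perp$ on $(f^*)^\perp$ has all eigenvalues with negative real part; since $\widetilde\Phi_t(\mu)$ is linear in the numerator, this gives directly $\widetilde\Phi_t(\mu) \to \mu^\star$ uniformly on compacts of $\mathcal{P}(\MM)\setminus\mathcal{P}(\MM_2)$, and hence the attractor structure needed for (i) and (iii). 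Your proof needs this (or an equivalent global argument) inserted; as written, the central dynamical step is asserted rather than proved.
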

\begin{exe}[{Two points space}]{\rm
\label{ex:2pt}
The previous results are in particular adapted to the case where $\MM_i = \{i\}, i = 1, 2$ and
$${K = \left(
        \begin{array}{cc}
         a & 1- a\\
          0 & b \\
        \end{array}
      \right)}$$
      with $a,b\in(0,1)$.
      Write $\mu \in {\cal P}(\MM)$ as $\mu = (x,1-x), 0 \leq x \leq 1.$ Then
      $$K_{\mu} = \left(
        \begin{array}{cc}
         a  &  1-a\\
         (1-b) x & b + (1-b) (1-x) \\
        \end{array}
      \right)$$
   and the ODE
      (\ref{eq:ODEE}) writes
 \begin{equation}\label{EDO:2points}
 \dot{x} = - x + \frac{(1-b)x}{(1-a) + (1-b) x}.
 \end{equation}

{In this case, one can check that $\Theta_1=a$ and $\Theta_2=b$, $\mu_2^\star=\delta_2$ and  when $a>b$, $\mu^\star=\frac{a-b}{1-b}\delta_1+\frac{1-a}{1-b}\delta_2$.
In Figure \ref{bifurcationfig},  for a fixed value of $b$, we draw  the phase portrait of the ODE \eqref{EDO:2points} in terms of $a$ and especially the bifurcation which appears when $a>b$. }

  \begin{figure}[h]
\begin{center}
\includegraphics[scale=0.4]{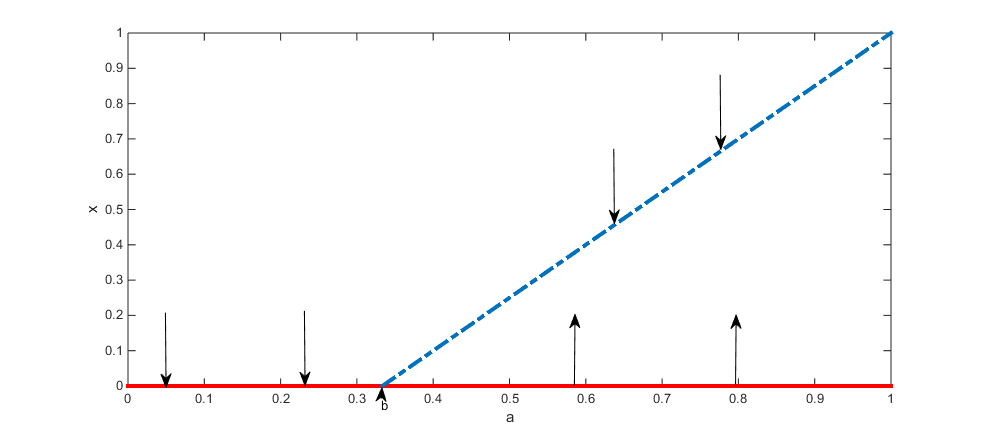}
\caption{\label{bifurcationfig} Transcritical bifurcation associated to Equation \eqref{EDO:2points}; $b=1/3$, Continuous line: $a\mapsto \mu^\star_2(1)$, dotted line: $a\mapsto \mu^\star(1)$ .}\end{center}
\end{figure}

      }
\end{exe}

\begin{Rq}[{Open problem}]{\rm
Suppose $\gamma_n = \frac{A}{n}.$ Although $\mu^\star_2$ is a saddle point when $\Theta_1 > \Theta_2,$ Theorem \ref{th:2points} shows that the event $\mu_n \rightarrow \mu_2^\star$ has positive probability when $A > 1.$   A challenging question would be to prove (or disprove) that this event has zero probability when $A \leq 1.$ This is reminiscent of the situation thoroughly analyzed for two-armed bandit problems
in \cite{LP08,LPT}}.
\end{Rq}

{
\begin{Rq}[Conditioned dynamics]{\rm
Note that by mimicking the proof of Lemma \ref{lem:ode2points} below, one is also able to compute the limit of the conditioned dynamics:
$$
\lim_{n \to \infty} \mathbb{P}_y (Y_n \in \cdot | Y_n \in \MM) = \lim_{n \to \infty} \frac{K^n(y, \cdot)}{K^n \1 (y)}  =\nu,
$$
where $\nu=\mu_2^\star$ if $\Theta_1 \leq \Theta_2$ or $y\in{\cal E}_2$ and $\nu=\mu^\star$ when $\Theta_1 > \Theta_2$ and $y\in{\cal E}_1$. Furthermore, at least for Example \ref{ex:2pt} above, it is worth noting that the convergence is not exponential when $\Theta_1=\Theta_2$.
}
\end{Rq}
}
{
\begin{Rq}[{Fleming-Viot algorithm}]{\rm
{
Theorem \ref{th:2points} shows that, with positive probability,  our algorithm asymptotically matches with  the behavior of the dynamics conditioned to the non-absorption. Surprisingly, this is not the case for the discrete-time (or continuous-time) Fleming-Viot particle system (see \cite[Section 3]{BC15}, for the definition) which always converges to $\mu_2^\star$. Actually, let us recall that this algorithm has two parameters: the (current) time $t\geq 0$ and the number of particles $N\geq 1$. When the number of particles goes to infinity, it is known that the empirical measure (induced by the particles at a fixed time) converges to the laws conditioned to not be killed; see for instance \cite{CT13,V11} in the continuous-time setting. However, if we keep constant the number of particles and  let first the time $t$ tend to infinity then,  one obtains the convergence to $\mu_2^\star$ in place of $\mu^\star$. This comes from the fact that  the state where all the particles are in ${\cal E}_2$ is absorbing and accessible. In this case, the commutation of the limits established in \cite[Section 3]{BC15} fails. Finally, note that the study of the rate of convergence of Fleming-Viot processes in a two-points space is investigated in \cite{CT16}.
}}
\end{Rq}
}
\smallskip

 \subsection{Approximation of QSD of diffusions}
A potential application of this work is to generate a way to simulate QSD of continuous-time Markov dynamics. {To this end, the natural idea is to apply the procedure to a  discretized version (Euler scheme in the sequel) of the process.} Here, we focus on the case of non-degenerate diffusions $(\XX_t)_{t\ge 0}$ in $\ER^d$ killed when leaving a bounded connected open  set $D$.
More precisely, let $(\XX_t)_{t\ge 0}$ be the unique solution to the $d$-dimensional SDE 
 $$d\XX_t=b(\XX_t)dt+\sigma(\XX_t) dW_t, \quad \XX_0 \in D,$$
 where $b$ and $\sigma$ are  defined on $\ER^d$ with values in $\ER^d$  and $\mathbb{M}_{d,d}$ respectively. One assumes below that the diffusion is uniformly elliptic and that $b$ and $\sigma$ belong to ${\cal C}^2(\ER^d)$ (see Remark \ref{Unique_QSD}).\smallskip

 For a given step $h>0$, we denote  by  $(\xi_{t}^h)_{t\ge0}$, the stepwise constant Euler scheme defined by $\xi_0=y\in D$,
 $$\forall n\in \mathbb{N},\quad \xi_{(n+1) h}^h=\xi_{nh}^h+ h b(\xi_{nh}^h)+\sigma(\xi_{nh}^h)(W_{(n+1)h}-W_{nh}),$$
 and for all $t\in[nh, (n+1)h)$, $\xi_t^h=\xi_{nh}^h$. Under the ellipticity assumption  on the diffusion, the Markov chain $(Y_{n}:=\xi_{nh}^h)_n$ satisfies the assumptions of Theorem \ref{theo1} (with ${\MM}=\widebar{D}$) (in particular \eqref{eq:minmaj}) and thus admits a unique QSD  that we denote by $\mu_h^\star$.\smallskip

\noindent   This QSD can be approximated through the procedure defined above and the  natural question is: does $(\mu_h^\star)_h$ converge to $\mu^\star$ when $h\rightarrow0$, where $\mu^\star$ denotes the unique QSD of $(\XX_t)_{t\ge 0}$ killed when leaving $D$ ? A positive answer is given below.

 \begin{theo}[Euler scheme approximation]\label{prop:diff1} Assume that $(\XX_t)_{t\ge0}$ is a uniformly elliptic diffusion and that $D$ is a bounded domain (i.e. connected open set)  with ${\cal C}^{3}$-boundary. Then, $((\XX_t)_{t\ge0},\partial D)$
 admits a unique QSD $\mu^\star$ and  $(\mu_h^\star)_{h>0}$ converges weakly to $\mu^\star$ when $h\rightarrow0$.
 \end{theo}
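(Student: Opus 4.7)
The plan is to proceed in three stages: (i) establish existence and uniqueness of a QSD $\mu^\star$ for the continuous killed diffusion; (ii) use tightness on the compact set $\bar D$ to reduce the convergence to identifying subsequential weak limits of $(\mu_h^\star)_{h>0}$; (iii) show that any such limit $\nu$ is a QSD of the continuous diffusion, so that uniqueness forces $\nu = \mu^\star$.

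Stage (i) is classical under our hypotheses (uniform ellipticity, ${\cal C}^2$ coefficients, ${\cal C}^3$ boundary). The killed semigroup $(P_t)_{t\ge 0}$ is compact on $C_0(D)$, and the generator $Lf = \tfrac12\,\mathrm{tr}(\sigma\sigma^\top\nabla^2 f) + b\cdot\nabla f$ with Dirichlet boundary conditions has a simple principal eigenvalue $-\lambda_0<0$ with a positive right eigenfunction $\phi\in{\cal C}^2(\bar D)$ and a positive left eigenfunction $\tilde\phi\in L^1(D)$. The unique QSD is $\mu^\star(dx)=\tilde\phi(x)\,dx/\|\tilde\phi\|_{L^1}$ and its extinction rate over a time window of length $t$ is $e^{-\lambda_0 t}$ (see for instance Pinsky's monograph on diffusions and positive harmonic functions, or the work of Cattiaux and M{\'e}l{\'e}ard). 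Stage (ii) is free: every $\mu_h^\star$ is supported on the compact $\bar D$, so Prohorov's theorem yields tightness and it is enough to argue on subsequences.

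For stage (iii), fix $t>0$ and set $n_h:=\lfloor t/h\rfloor$. Iterating the QSD relation $\mu_h^\star K_h=\Theta_h\mu_h^\star$ gives
\begin{equation*}
\mu_h^\star(K_h^{n_h} f) \;=\; \Theta_h^{n_h}\, \mu_h^\star(f), \qquad f\in {\cal B}(\bar D,\R).
\end{equation*}
Suppose $\mu_h^\star\to\nu$ weakly along a subsequence $h=h_n\downarrow 0$. The proof then reduces to establishing two convergences: (a) $\mu_h^\star(K_h^{n_h} f)\to\nu(P_t f)$ for every $f\in C(\bar D)$ vanishing on $\partial D$; and (b) $\Theta_h^{n_h}\to e^{-\lambda_0 t}$. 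Once (a) and (b) are in hand, the displayed identity passes to the limit as $\nu(P_t f)=e^{-\lambda_0 t}\nu(f)$ on a separating class of continuous functions, which characterizes $\nu$ as a QSD of the killed diffusion with extinction rate $e^{-\lambda_0 t}$, and uniqueness forces $\nu=\mu^\star$.

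The main obstacle is point (a), namely the weak convergence of the Euler scheme killed at discrete times to the diffusion killed at $\partial D$. The difficulty is well known: the scheme only inspects the boundary at times $nh$ and misses intra-step excursions, which degrades the naive weak error of killed Euler schemes to $O(\sqrt h)$ in a boundary layer. Nevertheless, by coupling the scheme and the diffusion through the same Brownian motion and exploiting the ${\cal C}^3$-regularity of $\partial D$ together with non-degeneracy of $\sigma$ (so that the exit time of the diffusion is almost surely a continuity point of the Skorokhod-type exit map and the diffusion occupies $\partial D$ for zero Lebesgue time), one obtains $K_h^{n_h} f(x)\to P_t f(x)$ pointwise and locally uniformly on compact subsets of $D$; the quantitative weak-error bounds of Gobet for killed Euler schemes can be invoked to control the boundary layer. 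Combined with $f|_{\partial D}=0$ and uniform mass estimates for $\mu_h^\star$ near $\partial D$ (barrier arguments based on the first eigenfunction $\phi$), this lifts pointwise convergence to convergence of integrals against $\mu_h^\star$. Point (b) is then handled in parallel by recognizing $-h^{-1}\log\Theta_h$ as the principal eigenvalue of the Euler-discretized operator and invoking standard spectral convergence for compact-convergent families, again using the heat-kernel bounds available here.
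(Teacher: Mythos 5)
Your overall strategy matches the paper's: prove existence and uniqueness of the continuous QSD $\mu^\star$ via Pinsky (the paper cites \cite{P95}), extract weak subsequential limits of $(\mu_h^\star)$, pass the discrete QSD identity to the limit using Gobet's $O(\sqrt h)$ weak-error bound for killed Euler schemes, and invoke uniqueness to conclude. Two of your steps, however, deserve a closer look.

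First, the tightness claim. Saying tightness is ``free'' by Prohorov on the compact $\bar D$ is misleading: the delicate point is not compactness of $\bar D$, but that a subsequential weak limit might put positive mass on $\partial D$, in which case it is not a probability on $D$ and the QSD identity for the killed diffusion no longer characterizes it. You do acknowledge at the end that ``uniform mass estimates near $\partial D$'' are needed, but this is the actual content of the tightness step, not an afterthought. The paper handles it with a clean and quantitative argument (their Step~2): one writes $\mu_h^\star(B_\delta)\le \PE_{\mu_h^\star}(\xi_1^h\in B_\delta)$ using the discrete QSD identity at time $1$, and then bounds the one-step Euler density uniformly in $h$ by \cite{lemaire-menozzi}, giving $\mu_h^\star(B_\delta)\le C\lambda_d(B_\delta)$ uniformly in $h$. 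Your proposed barrier argument through the principal eigenfunction $\phi$ may work, but it requires extra regularity and uniformity arguments that you do not supply.

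Second, your point (b), $\Theta_h^{n_h}\to e^{-\lambda_0 t}$, is framed as ``standard spectral convergence for compact-convergent families.'' This is the weakest link: the killed Euler kernels $K_h$ do not converge to $P_h$ in a topology for which naive spectral continuity applies, and justifying this would be a nontrivial paper in itself. It is also unnecessary. The paper instead establishes (their Step~1) two-sided bounds $0<\lambda_{\min}\le\lambda_h\le\lambda_{\max}<\infty$ on $\lambda_h:=-h^{-1}\log\Theta_h$, obtained purely from uniform hitting-time estimates for the Euler scheme (Lemmas \ref{lem:contL2Euler} and \ref{lem:consunifellip}), with no spectral theory. One then extracts a subsequence along which $\lambda_{h_n}\to\lambda\in(0,\infty)$, passes the identity to the limit, and identifies $\lambda$ \emph{a posteriori} with $\lambda_0$ via uniqueness of the QSD. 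Alternatively, as you almost notice, (b) actually follows from (a): once $\mu_{h}^\star(K_h^{n_h}f)\to\nu(P_t f)$ and $\mu_{h}^\star(f)\to\nu(f)>0$ for some positive compactly supported $f$, the quotient $\Theta_h^{n_h}$ automatically converges, so no spectral argument is needed. If you replace ``spectral convergence'' by either of these arguments and make the boundary tightness bound quantitative, your proof is essentially the paper's proof.
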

\begin{Rq}[{Smoothness assumptions}]\label{Unique_QSD}
The uniqueness of $\mu^\star$ is given by Theorem 5.5 of  Chapter $3$ in \cite{P95} (see also \cite[Theorem (C)]{GongZhao}). Also note that in the proof of the above theorem, one makes use of some results of \cite{gobet} about the discretization of killed diffusions. The ${\cal C}^2$-assumption on $b$ and $\sigma$ is adapted to the setting of {these} papers but could  be probably relaxed in our context.
\end{Rq}

We propose to illustrate the previous results by some simulations.
We consider an Ornstein-Uhlenbeck process
$$d\XX_t=-\XX_t dt+ dB_t,\quad t\ge0$$
killed outside an interval $[a,b]$ and thus compute the sequence $(\mu_n^h)_{n\ge1}$ with step $h$.

We will assume that $a=0$ and $b=3$. In Figure \ref{fig:OU}, we represent on the left the approximated density of $\mu_n^h$ (obtained by a convolution with a Gaussian kernel) for a fixed value of  $h$ and different values of $n$. Then, on the right, $n$ is fixed $(n=10^7)$ and $h$ decreases to $0$.
\begin{figure}[htbp]
\hspace{-0cm}
   \begin{minipage}[c]{.46\linewidth}
   \includegraphics[scale=0.5]{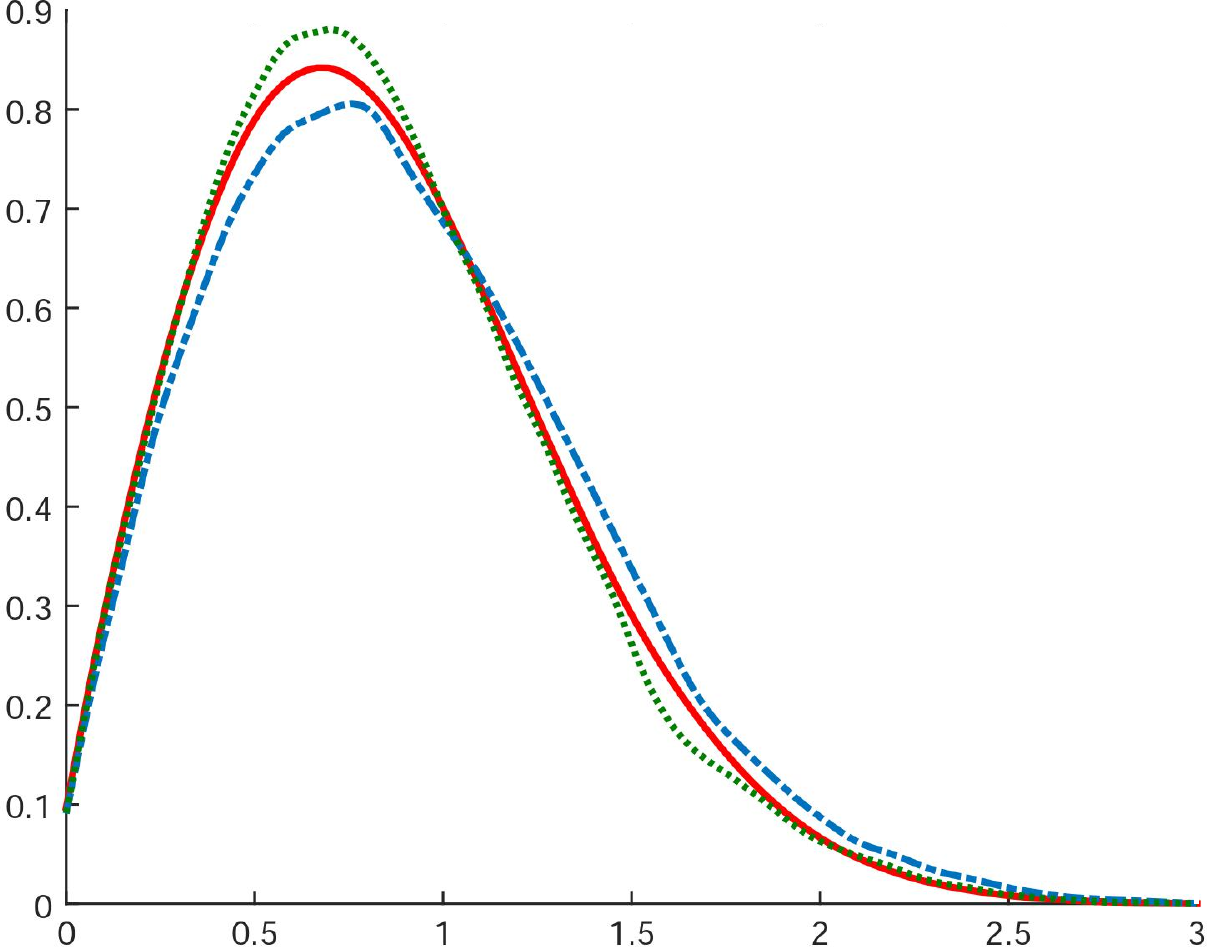}
   \end{minipage} \hfill
   \hspace{-4cm}
   \begin{minipage}[c]{.46\linewidth}
     \includegraphics[scale=0.5]{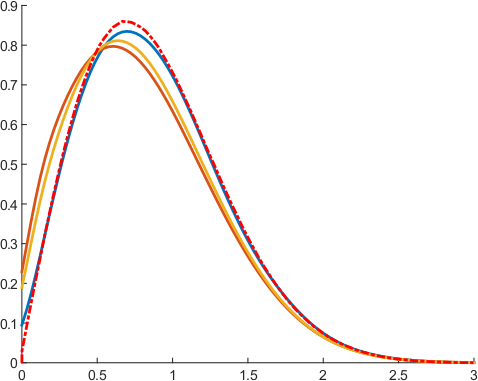}
   \end{minipage}
   \caption{\label{fig:OU} Left: Approximated density of $(\mu_n^h)$ with $h=0.01$ and $n=5.10^4,10^5,10^6$ (green, blue, red) Right: Comparison of $\mu^\star_h$ for $h=0.05,0.01, 0.001$, (red, orange, blue)
   with $\mu^\star$ (red, dotted-line) }
\end{figure}
Unfortunately, even though the convergence in $n$ seems to be fast, the convergence of $\mu_h^\star$ towards $\mu^\star$ is very slow: the  discretization of the problem {underestimates} the probability to be killed between two discretization times. The slow convergence means in fact that this probability decreases slowly to $0$ with $h$. \smallskip

\noindent However, it is now well-known that, under some conditions on the domain and/or on the dimension, it is possible to compute a sharp estimate of this probability. More precisely, let  ${(\widetilde{\xi}_t^h})_t$
denote the refined continuous-time Euler scheme $\widetilde{\xi}_{nh}^h=\xi_{nh}^h$ and for all $t\in[nh, (n+1)h)$,
$$ \widetilde{\xi}_t^h=\widetilde{\xi}_{nh}^h+ (t-nh) b(\widetilde{\xi}_{nh}^h)+\sigma(\widetilde{\xi}_{nh}^h) (W_t-W_{nh}).$$
It can be shown that
$${\cal L}\left((\widetilde{\xi}_t^h)_{t\in[nh,(n+1)h]}|\widetilde{\xi}_{nh}^h=x,\widetilde{\xi}_{(n+1)h}^h=y\right)={\cal L}\left(x+\frac{t-nh}{h}(y-x)+\sigma(\widetilde{\xi}_{nh}^h) {\bf B}_t^h\right)$$
where for a given $T>0$, ${\bf B}^T$ denotes the Brownian Bridge on the interval $[0,T]$ defined by: $ {\bf B}_t^T= W_t-\frac{t}{T}W_T$. In dimension $1$, the law of the infimum and the supremum of
the Brownian Bridge can be computed (see \cite{gobet} for details and a discussion about higher dimension). One has for every $z\ge \max(x,y)$,
$$\PE\left(\sup_{t\in[0,T]} \left(x+(y-x)\frac{t}{T} +\lambda {\bf B}^{T}\right)\le z \right) =1-\exp\left(-\frac{2}{T\lambda^2}(z-x)(z-y)\right).$$
Thus, this means that at each step $n$, if $\xi_{(n+1)h}\in D$, one  can compute, with the help of the above properties, a Bernoulli random variable $V$ with parameter
$$p=\PE( \exists t\in(nh,(n+1)h), \xi_t^h\in D^c|\xi_{nh}=x,\xi_{(n+1)h}=y)\quad\textnormal{ (If $V=1$, the particle is killed)}.$$
 This refined algorithm has been tested numerically and illustrated in Figure \ref{fig2:OU}.

\begin{figure}[h]
\begin{center}
\vspace{-4cm}
\includegraphics[scale=0.5]{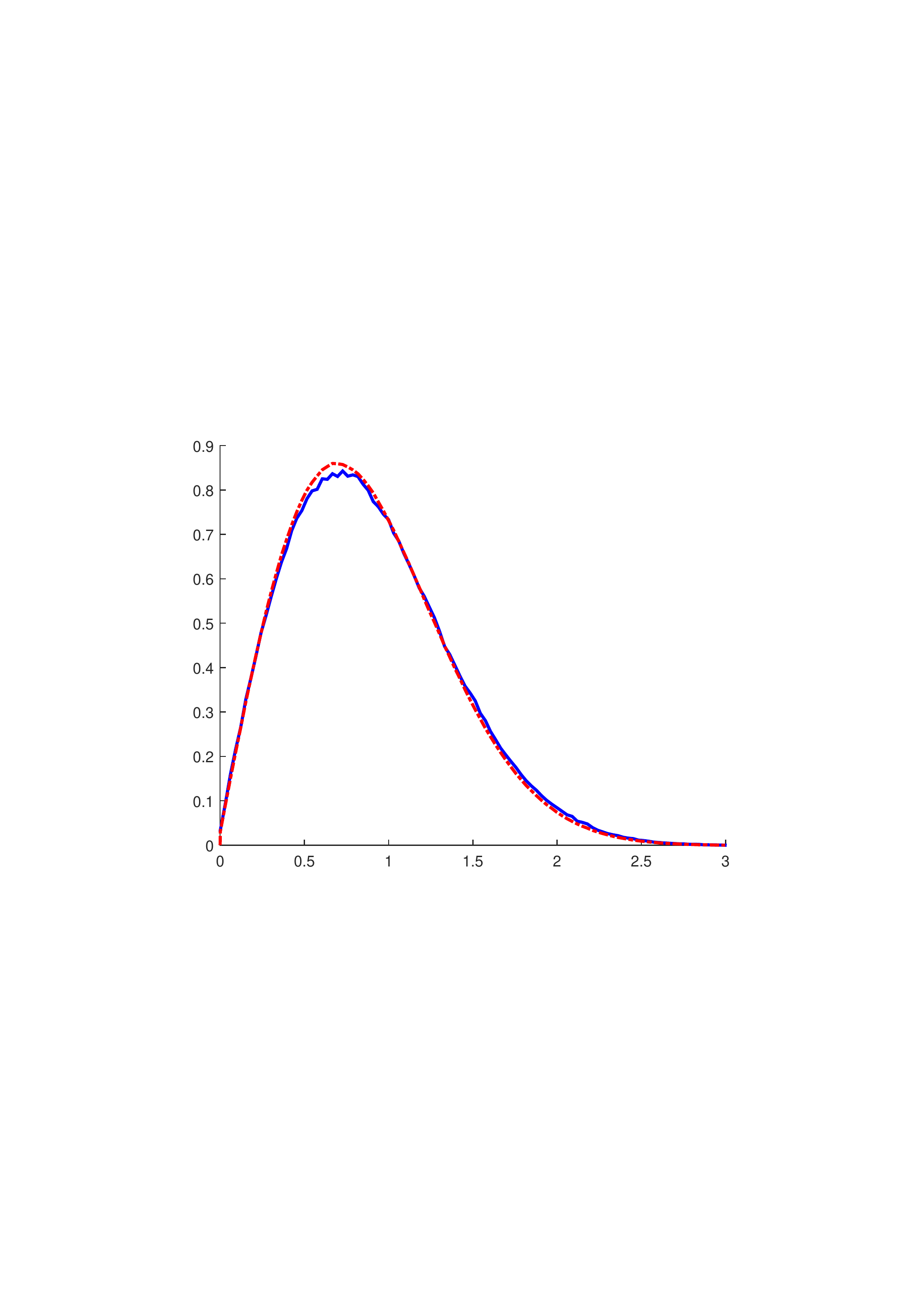}
\vspace{-4cm}
\caption{\label{fig2:OU} Approximation of $\mu^\star_h$ with the Brownian Bridge method
for $h=0.1$ (blue) compared with the reference density (red, dotted-line)}
\end{center}
\end{figure}

{
\begin{Rq} \label{rq:decreasstep} Here, the effect of the Brownian Bridge method is only considered from a numerical viewpoint. The theoretical consequences on the rate of convergence are outside of the scope of this paper.
Also remark that in order to get only one asymptotic for the algorithm, it would be natural to replace the constant step $h$ by a decreasing sequence as in \cite{L07,P08}. Once again, such a theoretical extension is left to a future work.\end{Rq}
}

\noindent \textbf{Outline of the proofs.}  In Section \ref{sect:Prelim}, we begin by some preliminaries: the starting point is to show that the QSD is a fixed point for the application $\mu\mapsto\Pi_\mu$ (on ${\cal P}({\MM})$) where $\Pi_\mu$ denotes the invariant distribution of $K_\mu$ (see Lemma \ref{lemme1} below). Then,  in order to give a rigorous sense to the ODE \eqref{eq:ODEE}, we prove that this application is Lipschitz continuous for the total variation norm (Proposition \ref{lem:explicitpimu}) by taking advantage of the exponential ergodicity of the transition kernel $K_\mu$  and the control of the exit time $\tau$ (see Lemma \ref{lem:expect} and Lemma \ref{lem3} ). In Section \ref{sect:ODE}, {we define the solution of the ODE and prove its global asymptotic stability}. In Section \ref{sect:APT}, we then show that {(a scaled version of)} $(\mu_n)_{n\ge0}$ is an asymptotic  pseudo-trajectory for the ODE.
The proofs of Theorems  \ref{theo1} and \ref{th:cvmarche} are finally achieved at the beginning of Section \ref{sec:PMR}. In this section, we also prove the main results of Section 3: Theorems  \ref{th:2points} and \ref{prop:diff1}. We end the paper by some possible  extensions of our present work.\smallskip

\section{Preliminaries}
\label{sect:Prelim}


We begin the proof by a series of preliminary lemmas.
The first one provides uniform estimates on the extinction time
\begin{equation}
\label{eq:deftau}
\tau = \min\{ n \geq 0 : Y_n = \cim\}
 \end{equation}
  \tcr{where  $(Y_n)_{n\ge0}$ is a Markov chain with transition  $\widebar{K}$ defined in \eqref{eq:defkbar}}.
\begin{lem}[Expectation of the extinction time] \label{lem:expect}
Assume $\mathbf{H_1}$ and $\mathbf{H_2}.$ Then \begin{itemize}
\item[(i)] There exist $N \in \N$ and $\delta_0 > 0$ such that for all $x\in {\MM}$,  $\widebar{K}^N(x,\{\cim\})\geq \delta_0$.
\item[(ii)]
$$\sup_{x\in{\MM}}\ES_x[\tau]<+\infty.$$
\end{itemize}
\end{lem}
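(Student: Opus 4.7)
The plan is to first secure a uniform positive lower bound for the killing probability within a fixed time horizon (part (i)), and then to bootstrap this into an exponential uniform tail bound on $\tau$ by the Markov property (part (ii)).

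For (i), I combine a continuity/compactness argument with the monotonicity induced by absorption. Three ingredients are needed. First, because $\cim$ is absorbing under $\widebar{K}$, the map $n \mapsto \widebar{K}^n(x, \{\cim\}) = 1 - K^n \1(x)$ is non-decreasing in $n$ for every fixed $x \in \MM$. Second, $\mathbf{H_1}$ (Feller) and an immediate induction (starting from $\1 \in {\cal C}(\MM,\R)$) show that $K^n \1 \in {\cal C}(\MM,\R)$, so $x \mapsto \widebar{K}^n(x, \{\cim\})$ is continuous. Third, by $\mathbf{H_2}$, for each $x \in \MM$ there exists $n_x \geq 1$ with $\widebar{K}^{n_x}(x, \{\cim\}) > 0$. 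Continuity then yields an open neighborhood $V_x \ni x$ and a constant $\delta_x > 0$ such that $\widebar{K}^{n_x}(y, \{\cim\}) \geq \delta_x$ for every $y \in V_x$. Extracting a finite subcover $\{V_{x_1}, \ldots, V_{x_k}\}$ of the compact set $\MM$ and setting $N := \max_{1 \leq i \leq k} n_{x_i}$ and $\delta_0 := \min_{1 \leq i \leq k} \delta_{x_i}$, monotonicity in $n$ gives $\widebar{K}^N(y, \{\cim\}) \geq \delta_0$ for all $y \in \MM$, proving (i).

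For (ii), point (i) says exactly that $K^N \1(x) = \PE_x(\tau > N) \leq 1 - \delta_0$ uniformly in $x \in \MM$. Applying the Markov property at times $kN$ yields the standard sub-geometric estimate
$$\PE_x(\tau > kN) \leq (1 - \delta_0)^k, \qquad k \in \N,$$
by a routine induction on $k$. Summing over $n$ and grouping terms in blocks of length $N$ then gives
$$\ES_x[\tau] = \sum_{n \geq 0} \PE_x(\tau > n) \leq N \sum_{k \geq 0} \PE_x(\tau > kN) \leq \frac{N}{\delta_0},$$
which is the desired uniform bound.

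The only genuinely delicate point is the upgrade from pointwise accessibility to a uniform time horizon $N$: it rests crucially on the joint use of $\mathbf{H_1}$ (to convert pointwise positivity into a neighborhood estimate) and the compactness of $\MM$ (to extract a finite subcover), together with the monotonicity afforded by absorption at $\cim$. Once (i) is in hand, (ii) is a textbook consequence of the Markov property.
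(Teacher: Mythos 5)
Your proof is correct, and it follows essentially the same strategy as the paper's: uniform killing within a fixed horizon via Feller continuity plus compactness in (i), then a Markov-property bootstrap giving a geometric tail for $\tau$ in (ii). The only difference is cosmetic: in (i) you run the compactness argument directly by patching together the pointwise accessibility via a finite open subcover (using monotonicity of $n\mapsto\widebar{K}^n(x,\{\cim\})$ to move to the common horizon $N=\max_i n_{x_i}$), whereas the paper argues by contradiction, extracting a convergent subsequence $x_N\to x^*$ and using the same monotonicity to force $\widebar{K}^k(x^*,\{\cim\})=0$ for every $k$, violating $\mathbf{H_2}$. Both use exactly the ingredients you flagged — Feller continuity, compactness of $\MM$, and absorption at $\cim$ — so this is the same proof in two standard guises. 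Part (ii) is identical, with your bound $\ES_x[\tau]\leq N/\delta_0$ even slightly tighter than the one written in the paper.
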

\begin{proof}
\textit{(i)} By $\mathbf{H_1}$ the map $x  \mapsto \widebar{K}^N(x,\cim)=1-{K}^N \1(x)$ is continuous on $\MM.$ It then suffices to show that there exists $N\in\N$ such that $\widebar{K}^N(x,\cim)>0$ for all $x\in{\cal E}$.
Suppose to the contrary that for all $N \in \N$ there exists $x_N \in \MM$ such that $\widebar{K}^N(x_N,\cim) =  0.$ Hence $\widebar{K}^k(x_N,\cim) =  0$ for all $k \leq N.$
By  compactness of ${\MM}$, we can always   assume (by replacing $(x_N)$ by a subsequence) that $x_N \rightarrow x^*\in{\MM}.$ Thus
 $\widebar{K}^k (x_N,\partial)\xrightarrow{N\rightarrow \infty} \widebar{K}^k (x^*,\partial) = 0$ for all $k\in\mathbb{N}.$ This leads to a contradiction with assumption $\mathbf{H_2}.$

\textit{(ii)} Let $N$ and $\delta_0$ be like in $(i).$  By the Markov property, for all $k\in\mathbb{N}^*$
\begin{equation}\label{eq:queueexpo}
\PE_x(\tau>k N) =\E_x \left[\PE_{Y_{(k-1)N}}(\tau>N) 1_{ \tau>(k-1)N} \right] \le (1-\delta_0) \PE_x(\tau> (k-1)N).
\end{equation}
Thus, for all $k \in \N$ $$\PE_x(\tau>kN)\le (1-\delta_0)^k$$ and, consequently, $$\frac{1}{N}\ES_x[\tau] \leq \frac{1}{\delta_0} + 1.$$
\end{proof}
\tcr{\begin{Rq} Note that \eqref{eq:queueexpo} leads in fact to the following statement: there exists $\lambda>0$ such that $\sup_{x\in\MM}\ES[e^{\lambda \tau}]<+\infty$.
\end{Rq}}
\tcr{The following lemma is reminiscent of the approach developed in \cite{Ferrari} for Markov chains on the positive integers killed at the origin  and in \cite{BenAri} for diffusions killed on the boundary of a domain.}
\begin{lem}[Invariant distributions and QSD]\label{lemme1} Assume $\mathbf{H_1}$. Then,
\begin{itemize}
\item[(i)] For every $\mu\in{\cal P}({\MM})$, $K_\mu$ is a Feller kernel  and admits at least one invariant probability.
\item[(ii)] A probability  $\mu^\star$ is a QSD for $K$ if and only if it is an invariant probability of $K_{\mu^\star}$.
\item[(iii)] Assume {that for every $\mu$,}  $K_{\mu}$ has a unique invariant probability  $\Pi_\mu$. Then $\mu \mapsto \Pi_\mu$ is continuous in ${\cal P}(\MM)$ (i.e~for the topology of weak convergence)  and then there exists $\mu^\star\in{\cal P}({\MM})$ such that $\mu^\star=\Pi_{\mu^{\star}}$ or, equivalently, a QSD $\mu^\star$ for $K.$
\end{itemize}
\end{lem}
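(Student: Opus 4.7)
\textbf{Plan for Lemma \ref{lemme1}.}

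\emph{Part (i).} First I would verify that the kernel $K_\mu$ is Feller. Since the map $\1$ is continuous and $K$ is Feller, $K\1$ is continuous, so the killing rate $\delta = \1 - K\1$ is continuous on $\MM$. For $f \in \cal{C}(\MM,\R)$, the identity $K_\mu f = Kf + \delta\cdot \mu(f)$ then exhibits $K_\mu f$ as a sum of two continuous functions, hence continuous. Existence of an invariant probability for $K_\mu$ is then standard: $\MM$ being compact, $\cal{P}(\MM)$ is compact in the weak topology (Prohorov), and a Krylov--Bogolyubov argument applied to the Cesàro averages $\frac{1}{n}\sum_{k=0}^{n-1}\delta_x K_\mu^k$ furnishes an invariant probability using the Feller property to pass to the limit in $\pi K_\mu = \pi$.

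\emph{Part (ii).} The key observation is that for any $\mu \in \cal{P}(\MM)$ one has $\mu(\delta) = 1 - \mu K \1 = 1 - \Theta(\mu)$. For any test $f \in \cal{B}(\MM,\R)$:
\begin{equation*}
\mu K_\mu (f) = \mu(Kf) + \mu(\delta)\mu(f) = \mu K(f) + (1-\Theta(\mu))\mu(f).
\end{equation*}
If $\mu^\star$ is a QSD then $\mu^\star K = \Theta(\mu^\star) \mu^\star$, which immediately gives $\mu^\star K_{\mu^\star} = \mu^\star$. Conversely, if $\mu^\star K_{\mu^\star} = \mu^\star$, the identity forces $\mu^\star K = \Theta(\mu^\star) \mu^\star$, so $\mu^\star$ is a QSD. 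This is a direct computation with no obstacle.

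\emph{Part (iii).} For the continuity of $\mu \mapsto \Pi_\mu$, I would exploit the compactness of $\cal{P}(\MM)$ together with uniqueness. Suppose $\mu_n \to \mu$ weakly. By compactness, any subsequence of $(\Pi_{\mu_n})$ admits a further subsequence converging to some $\nu \in \cal{P}(\MM)$; call it $(\Pi_{\mu_{n_k}})$. For $f \in \cal{C}(\MM,\R)$, Feller continuity ensures $Kf$ and $\delta$ are continuous, so
\begin{equation*}
\Pi_{\mu_{n_k}}(f) = \Pi_{\mu_{n_k}}(Kf) + \mu_{n_k}(f)\,\Pi_{\mu_{n_k}}(\delta) \xrightarrow[k\to\infty]{} \nu(Kf) + \mu(f)\nu(\delta) = \nu(K_\mu f).
\end{equation*}
Hence $\nu = \nu K_\mu$, and by the uniqueness assumption $\nu = \Pi_\mu$. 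Since every subsequence has a further subsequence converging to the same limit, $\Pi_{\mu_n} \to \Pi_\mu$; this proves continuity. Finally, since $\mu \mapsto \Pi_\mu$ is a continuous self-map of the compact convex metrizable set $\cal{P}(\MM)$ (viewed inside the locally convex space of signed measures with the weak-$*$ topology), the Schauder--Tychonoff fixed point theorem yields a fixed point $\mu^\star = \Pi_{\mu^\star}$, which is a QSD by part (ii).

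The only step requiring care is the continuity claim in (iii); the rest is essentially bookkeeping. The potential subtlety is that without uniqueness the argument above would only show that limit points of $\Pi_{\mu_n}$ lie in the fixed-point set of $K_\mu$, so the uniqueness hypothesis is used precisely to collapse this set to a single point.
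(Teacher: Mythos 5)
Your proposal is correct and follows essentially the same route as the paper's own proof: verify the Feller property of $K_\mu$ directly and apply a Krylov--Bogolyubov argument for (i); use the identity $\mu K_\mu(f) = \mu K(f) + (1-\mu K\1)\mu(f)$ to get the QSD--invariance equivalence in (ii); and for (iii), extract a weakly convergent subsequence of $(\Pi_{\mu_n})$, pass to the limit in the invariance equation via Feller continuity, invoke uniqueness, and finish with the Schauder--Tychonoff fixed-point theorem. The only cosmetic difference is that you spell out the "every subsequence has a further subsequence converging to the same limit" step, which the paper leaves implicit.
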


\begin{proof} \textit{(i)} The Feller property is obvious under $\mathbf{H_1}$ and it is well known that a Feller Markov chain on a compact space has an invariant probability (since any weak limit of the sequence  $(\frac{1}{n} \sum_{k = 1}^n \nu K^n_{\mu})_{n \geq 0}$ is an invariant probability). \smallskip

\noindent  \textit{(ii)} Since $\delta =\1 -K\1$, for every $A\in {\cal B}({\MM})$, we have
$$\mu^\star (A) = (\mu^\star K_{\mu^\star})(A) \Leftrightarrow \mu^\star(A)=\frac{(\mu^\star K)(A)}{(\mu^\star K) \1}.$$
But, by definition $\mu^\star$ is a QSD if  and only if the right-hand side is satisfied for  every $A\in {\cal B}({\MM})$.

\noindent  \textit{(iii)} Let $(\mu_n)_{n\geq 0}$ be a probability sequence converging  to some $\mu$ in ${\cal P}(\MM).$ Replacing $(\mu_n)_{n\geq 0}$ by a subsequence,  we can always assume, by compactness of ${\cal P}(\MM),$  that  $(\Pi_{\mu_n})_{n\geq 0}$ converges to some $\nu.$  For every $n\geq 0$ and $f \in {\cal C}(\MM,\R)$, we have
$$
\Pi_{\mu_n}(f) =\Pi_{\mu_n}(K_{\mu_n} f)= \Pi_{\mu_n} (Kf) + \Pi_{\mu_n}(\delta) \mu_n(f).
$$
By $\mathbf{H_1}$, the maps $Kf$ and $\delta$ are continuous and hence by letting $n \to \infty$, one obtains
$$
\nu(f) = \nu(Kf)+ \nu(\delta) \mu(f),
$$
namely $\nu$ is an invariant for $K_\mu$. By uniqueness $\nu = \Pi_\mu$. This proves the continuity of the map $\mu \mapsto \Pi_{\mu}.$
Now, since ${\cal P}({\MM})$ is a convex compact subset of a locally convex topological space (the space of signed measures equipped with the weak* topology)  every continuous mapping from ${\cal P}({\MM})$ into itself has a fixed point by Leray-Schauder-Tychonoff fixed point theorem.

\end{proof}
For all $\mu \in {\cal P}(M)$ and $t \geq 0$ we let $P_t^{\mu}$ denote the Markov kernel on $\MM$ defined by
 \begin{equation}\label{poissonization}
 {P}_t^{\mu}(x,\cdot)
:= e^{-t} \sum_n \frac{t^n}{n!} K_{\mu}^n(x,\cdot).
\end{equation}
It is classical (and easy to verify) that
\begin{itemize}
\item[(a)] $(P_t^{\mu})_{t \geq 0}$ is a semigroup (i.e $P_{t+s}^{\mu} f = P_t^{\mu} P_s^{\mu} f$ for all $f \in {\cal B}(\MM,\R)$);
    \item[(b)] Every invariant probability for $K_{\mu}$ is invariant for $P_t^{\mu};$
    \item[(c)] $P_t^{\mu}$ is Feller whenever $K_{\mu}$ is (in particular under $\mathbf{H_1}$).
    \end{itemize}
\smallskip
{ If $(X_n^\mu)_{n\ge0}$ is  a Markov chain with transition $K_\mu$, $(P_t^{\mu})_{t\geq0}$ denotes the semi-group of $(X_{N_t}^\mu)_{t\ge0}$ where $(N_t)_{t\ge0}$ is an independent Poisson process with intensity $1$.}

For any finite signed measure $\nu$ on $M$ recall that the {\em total variation norm} of $\nu$ is defined
as
\begin{eqnarray}
\label{def:vartot}
\Vert \nu \Vert_{\textrm{TV}}  &=& \sup\{ |\nu f| ~:~ f \in {\cal B}(\MM,\R),  \|f\|_{\infty} \leq 1\}\\
&=& \nu^+(\MM) + \nu^-(\MM) \nonumber
\end{eqnarray}
where $\nu = \nu^+ - \nu^-$ is the Hahn Jordan decomposition of $\nu.$
Let us recall that if {$P$} is a Markov kernel on $M$ and $\alpha, \beta \in {\cal P}(\MM),$ then
{\begin{equation}
\label{eq:contractTV}
\Vert\alpha P - \beta P\Vert_{\textrm{TV}} \leq \Vert\alpha - \beta\Vert_{\textrm{TV}}
\end{equation}
since $\|P f\|_{\infty} \leq \|f\|_{\infty}.$}
\begin{lem}[Uniform exponential ergodicity] \label{lem3} Assume $\mathbf{H_1}$ and $\mathbf{H_2}.$ Then there exists $0 < \varepsilon < 1$ such that for all  $\alpha, \beta, \mu \in{\cal P}({\cal E})$ and  $t \geq 0$
$$ \|\alpha P_t^{\mu}-\beta P_t^{\mu}\|_{\textrm{TV}}\le (1-\varepsilon)^{\lfloor t \rfloor} \|\alpha-\beta\|_{\textrm{TV}}.$$ In particular, {if $\Pi_\mu$ denotes an invariant probability for $K_\mu$},
$$ \|\alpha P_t^{\mu}-\Pi_{\mu}\|_{\textrm{TV}}\le (1-\varepsilon)^{\lfloor t \rfloor} \|\alpha-\Pi_{\mu}\|_{\textrm{TV}}.$$
{As a consequence,  $K_\mu$ has a unique invariant probability.}
\end{lem}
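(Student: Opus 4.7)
My strategy is the classical Doeblin minorization: establish the existence of $\varepsilon > 0$, independent of $\mu$, and, for each $\mu \in {\cal P}(\MM)$, of a probability measure $\tilde\nu^\mu \in{\cal P}(\MM)$ such that
$$P_1^\mu(x,\cdot) \ge \varepsilon\, \tilde\nu^\mu \quad \text{for all } x \in \MM.$$
The standard Doeblin decomposition $P_1^\mu = \varepsilon \tilde\nu^\mu + (1-\varepsilon) R^\mu$ (with $R^\mu$ Markov) then gives $\|\alpha P_1^\mu - \beta P_1^\mu\|_{\textrm{TV}} \le (1-\varepsilon)\|\alpha - \beta\|_{\textrm{TV}}$; iterating at integer multiples of $1$ and applying the TV non-expansivity \eqref{eq:contractTV} of $P_r^\mu$ to the fractional part $r = t - \lfloor t \rfloor$ then yields the announced bound.

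The core of the proof -- and the step I expect to be the main obstacle -- is thus producing a uniform (in $x$) minorizer. I would exploit the decomposition $K_\mu = K + \delta \otimes \mu$ and the first-order Dyson expansion $K_\mu^n = K^n + \sum_{k=1}^n K^{k-1}(\delta \otimes \mu) K_\mu^{n-k}$; Poissonizing and performing an elementary Beta-integral leads to the identity
$$P_1^\mu(x,\cdot) = \bar P_1(x,\cdot) + \int_0^1 \bar P_s(\delta)(x)\, \mu P_{1-s}^\mu(\cdot)\, ds,$$
where $\bar P_s(x, A) := e^{-s}\sum_n (s^n/n!)\, K^n(x, A)$ is the sub-Markov Poissonized semigroup attached to $K$ alone -- the crucial point being that $\bar P_s$ does not depend on $\mu$. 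Discarding the first non-negative term, $P_1^\mu(x,\cdot) \ge \int_0^1 \bar P_s(\delta)(x)\, \mu P_{1-s}^\mu(\cdot)\, ds$. Under $\mathbf{H_1}$, the function $\delta = \1 - K\1$ is continuous, hence so is $\bar P_s(\delta)$; and since $K^n\delta(x) = \PE_x(\tau = n+1)$, the accessibility of $\partial$ ($\mathbf{H_2}$) forces $\bar P_s(\delta)(x) > 0$ for every $x \in \MM$ and every $s > 0$. Compactness of $\MM$ then yields $h(s) := \inf_{x \in \MM} \bar P_s(\delta)(x) > 0$ for all $s > 0$, with $h$ independent of $\mu$. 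Consequently
$$P_1^\mu(x,\cdot) \ge \int_0^1 h(s)\, \mu P_{1-s}^\mu(\cdot)\, ds,$$
the right-hand side being a measure with total mass $\varepsilon := \int_0^1 h(s)\, ds > 0$ independent of $x$ and $\mu$; normalizing delivers the desired $\tilde\nu^\mu$.

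Once the contraction is in hand, the ``in particular'' claim follows by taking $\beta = \Pi_\mu$, invariance of $\Pi_\mu$ under $P_t^\mu$ being inherited from its invariance under the Feller kernel $K_\mu$ (Lemma \ref{lemme1}(i)). Uniqueness of $\Pi_\mu$ is then immediate: any two invariant probabilities $\Pi, \Pi'$ satisfy $\|\Pi - \Pi'\|_{\textrm{TV}} \le (1-\varepsilon)^{\lfloor t \rfloor}\|\Pi - \Pi'\|_{\textrm{TV}}$ for every $t \geq 0$, forcing $\Pi = \Pi'$.
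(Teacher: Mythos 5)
Your proof is correct, and it establishes the Doeblin minorization of $P_1^\mu$ by a genuinely different route than the paper. The paper's proof first observes, by a one-line induction, the pointwise lower bound $K_\mu^k f \geq \mu(f)\,K^{k-1}\delta$ for $f\geq 0$; Poissonizing and summing $\sum_{k=1}^N K^{k-1}\delta = \1 - K^N\1$ then yields $P_1^\mu(x,\cdot) \geq \varepsilon\,\mu(\cdot)$ with $\varepsilon = \delta_0/(eN!)$, where $N, \delta_0$ are taken directly from Lemma~\ref{lem:expect}(i). You instead derive the exact Duhamel-type identity $P_1^\mu(x,\cdot) = \bar P_1(x,\cdot) + \int_0^1 \bar P_s\delta(x)\,(\mu P_{1-s}^\mu)(\cdot)\,ds$ from the Dyson expansion of $K_\mu^n = K^n + \sum_{k=1}^n K^{k-1}(\delta\otimes\mu)K_\mu^{n-k}$ and the Beta integral, then minorize by the $\mu$-dependent mixture $\int_0^1 h(s)\,\mu P_{1-s}^\mu\,ds$ where $h(s) = \inf_x \bar P_s\delta(x)$. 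Your positivity argument for $h$ -- pointwise positivity of $\bar P_s\delta$ from accessibility of $\{\cim\}$, continuity from Feller plus uniform convergence of the Poisson series, uniform lower bound by compactness -- essentially reproves Lemma~\ref{lem:expect}(i) in a smoother guise rather than citing it. Both proofs produce a uniform-in-$(x,\mu)$ Doeblin constant $\varepsilon$ with a $\mu$-dependent minorizing measure, so the ensuing TV contraction, iteration over integer times, non-expansivity for the fractional part, and the uniqueness corollary are the same. The paper's minorizer $\varepsilon\mu$ is cheaper to obtain; your route costs a bit more work but yields the structural identity for $P_1^\mu$ and avoids the explicit appeal to the $N$-step killing bound.
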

\begin{proof}
\textit{(i)}. Set $P_{\mu} = P_1^{\mu}.$  Let $\delta_0 > 0$ and $N \in \N$ be given by Lemma \ref{lem:expect} \textit{(i)}.
It easily seen by induction  that for all $k \geq 1$ and $f : \MM \mapsto [0,\infty[$ measurable, $$K^k_{\mu} f \geq \mu(f) K^{k-1} \delta.$$
Thus,
\begin{eqnarray}
\label{doeblin}
 \nonumber P_{\mu} f  \geq   \frac{1}{e} \mu(f) \sum_{k = 1}^{N} \frac{1}{k!}  K^{k-1} \delta  \geq \frac{1}{e N!}
    \mu(f) \sum_{k = 1}^N K^{k-1} \delta \\
  =    \frac{1}{e N!}  \mu(f) (\1 - K^N \1)   \geq \varepsilon \mu(f)
\end{eqnarray}
where $$\varepsilon =  \frac{1}{e N!} \delta_0.$$
Let  $R_\mu$ be the kernel on $\MM$ defined by
\begin{equation}
\label{eq:K=psi+R}
\forall x\in {\MM}, \quad P_\mu(x,.)=\varepsilon \mu(.) +(1-\varepsilon) R_\mu(x,.).
\end{equation}
Inequality (\ref{doeblin})  makes $R_{\mu}$ a Markov kernel. Thus for all $\alpha, \beta \in {\cal P}(\MM)$

$$\|\alpha P_\mu-\beta P_\mu\|_{\textrm{TV}} = (1-\varepsilon) \|\alpha R_\mu-\beta R_\mu\|_{\textrm{TV}} \le (1-\varepsilon)\|\alpha-\beta\|_{\textrm{TV}},$$ (where the last inequality follows from (\ref{eq:contractTV}))
and, by induction,
$$\|\alpha P_\mu^n-\beta P_\mu^n\|_{\textrm{TV}}\le (1-\varepsilon)^n\|\alpha-\beta\|_{\textrm{TV}}.$$
Now, for all $t \geq 0$ write $t = n + r$ with $n \in \N$ and $0\leq r < 1.$ Then, {
\begin{align*}
\|\alpha P_t^{\mu}-\beta P_t^\mu \|_{\textrm{TV}} &= \|\alpha  P_r^{\mu} P_{\mu}^n -\beta P_r^\mu P_{\mu}^n \|_{\textrm{TV}}\\
&\le (1-\varepsilon)^{n}\|\alpha P^\mu_{r}-\beta P^\mu_r\|_{\textrm{TV}} \le (1-\varepsilon)^n \|\alpha-\beta\|_{\textrm{TV}}.
\end{align*}}
{As mentioned before, if $\Pi_\mu$ is an invariant probability for $K_\mu$, $\Pi_\mu$ is also an invariant probability for $(P^\mu_t)_{t\geq 0}$. The second inequality is thus obtained by setting $\beta=\Pi_\mu$ and uniqueness of the invariant probability is a consequence of the convergence of $(\alpha P_t^\mu)_{t\ge0}$} towards $\Pi_\mu$.
\end{proof}

\subsection{Explicit form for $\Pi_\mu$.} Let us denote by $ \Aa$ the transition kernel on $\MM$ defined by
$$\Aa(x,.)=\sum_{n\ge0} K^n(x,.)$$
and set
 $$\|\Aa\|_{\infty} = \sup \{ \|\Aa f\|_{\infty} ~:~ f \in {\cal B}(\MM,\R), \|f\|_{\infty} \leq 1 \}.$$
 {Remark that}
  $$\|\Aa\|_{\infty} = \sup_{x \in \MM} \Aa(x,\MM) \in [0, \infty].$$
\begin{prop}  \label{lem:explicitpimu} Assume $\mathbf{H_1}$ and $\mathbf{H_2}$. Then:
\begin{itemize}
\item[(i)] For all  $x\in {\MM},$
$$1 \leq \Aa(x,\MM) {=\Aa\mathbf{1}(x) } = \ES_x[\tau] \leq  \|\Aa\|_{\infty} < \infty.$$
\item[(ii)] For all $\mu \in {\cal P}(M),$
\begin{equation}\label{explicitpimu}
\Pi_\mu=\frac{\mu \Aa}{(\mu \Aa)(\1)}.
\end{equation}
\item[(iii)] The map $\mu \mapsto \Pi_{\mu}$ is Lipschitz continuous for the total variation distance.
\end{itemize}
\end{prop}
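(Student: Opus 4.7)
My strategy is to treat the three items in order, each leveraging the previous one.

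For (i), the key identity is $K^n \1(x) = \PE_x(Y_n \in \MM) = \PE_x(\tau > n)$, so that by Tonelli
\[
\Aa\1(x) = \sum_{n \geq 0} K^n \1(x) = \sum_{n \geq 0} \PE_x(\tau > n) = \ES_x[\tau].
\]
The lower bound $1$ comes from the $n=0$ term, and finiteness (uniformly in $x$) is exactly the content of Lemma \ref{lem:expect}(ii), which then also gives $\|\Aa\|_\infty = \sup_x \ES_x[\tau] < \infty$.

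For (ii), I will verify directly that the finite measure $\mu \Aa$ is invariant under $K_\mu$, and conclude by the uniqueness provided by Lemma \ref{lem3}. The core computation relies on the telescoping identity
\[
\Aa \delta = \sum_{n \geq 0} K^n(\1 - K\1) = \1 - \lim_{n\to\infty} K^n \1 = \1,
\]
where the limit vanishes because $K^n \1(x) = \PE_x(\tau > n) \to 0$ by Lemma \ref{lem:expect}. Then
\[
(\mu\Aa) K_\mu = \mu \Aa K + (\mu \Aa \delta)\, \mu = \mu \sum_{n \geq 1} K^n + \mu = \mu \Aa.
\]
Since $(\mu\Aa)(\1) = \mu(\Aa\1) \in [1, \|\Aa\|_\infty]$ by (i), normalizing yields an invariant probability, which must coincide with $\Pi_\mu$.

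For (iii), I will start from the explicit formula \eqref{explicitpimu} and decompose
\[
\Pi_\mu - \Pi_\nu = \frac{(\mu - \nu)\Aa}{(\mu \Aa)(\1)} + \nu \Aa \cdot \frac{(\nu - \mu)\Aa \1}{(\mu\Aa)(\1)\,(\nu\Aa)(\1)}.
\]
The two elementary bounds I need are: for any signed measure $\lambda$, $\|\lambda \Aa\|_{\textrm{TV}} \leq \|\Aa\|_\infty \|\lambda\|_{\textrm{TV}}$ (by the definition \eqref{def:vartot} and duality with $\|\Aa f\|_\infty \leq \|\Aa\|_\infty \|f\|_\infty$), and $|(\nu - \mu)\Aa \1| \leq \|\Aa\1\|_\infty \|\nu - \mu\|_{\textrm{TV}} = \|\Aa\|_\infty \|\nu - \mu\|_{\textrm{TV}}$. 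Combined with the denominators being at least $1$ (thanks to (i)), this yields
\[
\|\Pi_\mu - \Pi_\nu\|_{\textrm{TV}} \leq \|\Aa\|_\infty\bigl(1 + \|\Aa\|_\infty\bigr) \, \|\mu - \nu\|_{\textrm{TV}},
\]
which is the desired Lipschitz estimate.

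No step looks genuinely hard: (i) is a Fubini, (ii) is a one-line telescoping combined with the invariant-measure uniqueness already established, and (iii) is routine total-variation bookkeeping. The only point that requires a minimum of care is making sure in (ii) that $\mu\Aa$ is a \emph{finite} measure before invoking uniqueness, which is exactly what (i) provides through the uniform bound $\|\Aa\|_\infty < \infty$.
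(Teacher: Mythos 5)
Your proof is correct and follows essentially the same route as the paper: (i) is the same Tonelli computation invoking Lemma \ref{lem:expect}, (ii) is the same telescoping identity $\Aa\delta=\1$ followed by verifying invariance and invoking uniqueness from Lemma \ref{lem3}, and (iii) is the same "reduce the fraction" bookkeeping. The only cosmetic difference is in (iii): the paper exploits that $\nu\Aa$ is a positive measure (so $\|\nu\Aa\|_{\textrm{TV}}=(\nu\Aa)(\1)$), which cancels against the denominator and yields the slightly sharper constant $2\|\Aa\|_\infty$ versus your $\|\Aa\|_\infty(1+\|\Aa\|_\infty)$; both are fine for Lipschitz continuity.
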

\begin{proof} \textit{(i)} The inequality ${\cal A}(x,{\cal E})\ge 1$ is obvious. For the second one, we remark that for all $x \in \MM$
$$\Aa(x,\MM) = \sum_{n\ge0} K^n(x,\MM) = \sum_{n\ge0} \mathbb{P}_x(\tau > n) = \ES_x[\tau] \leq \sup_x \ES_x(\tau) < \infty$$
 where the last inequality follows from {Lemma \ref{lem:expect}}.

 \textit{(ii)}  For any $f \in {\cal B}(\MM,\R),$
$$\mu \Aa K_\mu(f)=\mu\left(\sum_{n\ge0} (K^{n+1} f+K^n\delta \mu(f))\right)=\sum_{n\ge0}\mu K^{n+1}f+\mu(f)\mu(\sum_{n\ge0} K^n(\delta)).$$
Since $\sum_{n\ge0} K^n\delta(x) = \sum_{n\ge0}\left(K^n(x,\MM) - K^{n+1}(x, \MM)\right) = \Aa(x,\MM) - (\Aa(x,\MM) - 1)= 1,$ it follows that
$$(\mu  \Aa ) K_\mu(f)=\mu(f)+\sum_{n\ge1} \mu K^n f=(\mu  \Aa)(f).$$
As a consequence, $\mu  \Aa$ is an invariant measure and it remains to divide by its mass to obtain an invariant probability.

\textit{(iii)} It follows from \textit{(i)} that $\|\mu \Aa\|_{\textrm{TV}} \leq \|\mu\|_{\textrm{TV}} \|\Aa\|_{\infty}$ and $\mu \Aa 1 \geq 1.$ Thus, {reducing the fraction,} it easily follows from \textit{(ii)} that $\|\Pi_{\mu} - \Pi_{\nu}\|_{\textrm{TV}} \leq 2 \|\Aa\|_{\infty} \|\mu - \nu\|_{\textrm{TV}}.$
\end{proof}
\section{The limiting ODE}
\label{sect:ODE}

As mentioned before, the idea of the proof of Theorem \ref{theo1} is to show that the long time behavior of   $(\mu_n)_{n\geq 0}$ can be precisely related to the long term behavior of a deterministic dynamical system  ${\cal P}(\MM)$ induced by the "ODE"
\begin{equation}
\label{eq:ODE}
`` \dot{\mu} = - \mu + \Pi_{\mu}."
\end{equation}
The purpose of this section is to define rigorously this dynamical system and to investigate some of its asymptotic properties.

Throughout the section, hypotheses $\mathbf{H_1}$ and $\mathbf{H_2}$ are implicitly assumed. Recall that ${\cal P}(\MM)$ is a compact metric space equipped with a distance metrizing the weak* convergence.

A {\em semi-flow} on ${\cal P}(\MM)$ is a continuous map $$\Phi : \R^+ \times {\cal P}(\MM) \to {\cal P}(\MM),$$
$$(t,\mu) \mapsto \Phi_{t}(\mu)$$ such that
$$\Phi_0(\mu) = \mu \mbox{ and } \Phi_{t+s}(\mu) = \Phi_t \circ \Phi_s(\mu).$$ We call such a semi-flow  {\em injective} if each of the maps $\Phi_t$ is injective.

A {\em weak solution} to (\ref{eq:ODE}) with initial condition $\mu \in  {\cal P}(\MM),$ is a continuous map  $\xi : \R^+ \mapsto {\cal P}(\MM)$ such that
$$\xi(t) f = \mu f + \int_0^t (-\xi(s) f +  \Pi_{\xi(s)} f) ds$$  for all $f \in {\cal C}(\MM)$ and $t \geq 0.$

We shall now show that there exists an injective semi-flow $\Phi$ on ${\cal P}(\MM)$ such that the trajectory $t \rightarrow \Phi_t(\mu)$ is the unique weak solution to (\ref{eq:ODE}) with initial condition $\mu.$

Let  ${\cal M}_s(\MM)$ be the space of finite signed measures on $\MM$  equipped with the total variation norm $\Vert \cdot \Vert_{\textrm{TV}}$  (defined by  equation (\ref{def:vartot})). By a Riesz type theorem,
 ${\cal M}_s(\MM)$ is a Banach space which can be identified with the dual space of ${\cal C}(\MM,\R)$ equipped with the uniform norm  (see e.g~ \cite[chapter 7]{dudley}). In particular, the supremum in the definition of $\Vert \cdot \Vert_{\textrm{TV}}$  can be taken over continuous functions.

Proposition \ref{lem:explicitpimu} \textit{(i)} and the fact that $K$ is Feller imply  that  $\sum K^n f  $ is normally convergent in ${\cal C}(\MM, \R)$ for any $f\in {\cal C}(\MM, \R)$. More precisely, $\sum_{n\ge0} \| K^n f\|_\infty\le \|{\cal A}\|_\infty\|f\|_\infty $ and hence $f \rightarrow  \Aa f$ is a bounded operator on  ${\cal C}(\MM, \R)$. Furthermore,  its adjoint
$\mu \rightarrow \mu \Aa$ is bounded on  ${\cal M}_s(\MM).$ Thus, by standard results on linear differential equations in Banach spaces,  $e^{t \Aa}$ is a well defined bounded operator
and the mappings $(t,f) \rightarrow  e^{t \Aa} f$ and  $(t,\mu) \rightarrow \mu e^{t \Aa}$ are $C^{\infty}$ mappings satisfying the differential equations
 $$\frac{d}{dt} ( e^{t \Aa} f) =  (e^{t \Aa} \Aa f) = \Aa e^{t \Aa} f$$ and $$\frac{d}{dt} (\mu e^{t \Aa}) = \mu  (e^{t \Aa} \Aa) = \mu \Aa e^{t \Aa}.$$

 For $\mu \in {\cal P}(M)$ and $t \geq 0$ set
 \beq
 \label{eq:gt}
 g_t = e^{t \Aa} \1 \in {\cal C}(\MM),
 \eeq
 $$\widetilde{\Phi}_t(\mu) : = \frac{ \mu e^{t \Aa}}{\mu g_t} \in {\cal P}(\MM),$$ and
 $$s_{\mu}(t) = \int_0^t \widetilde{\Phi}_s(\mu) \Aa \1 ds.$$
 Note that{, by Proposition \ref{lem:explicitpimu} (i), $\dot{s}_{\mu}(t) =   \widetilde{\Phi}_t(\mu) \Aa \1 =\frac{\mu e^{t \Aa}\Aa \1}{\mu e^{t \Aa} \1} \geq 1$} and hence $s_{\mu}$ maps diffeomorphically $\R^+$ onto itself. We let
 $\tau_{\mu}$ denote its inverse and
 \beq
 \label{eq:defPhi}
 \Phi_t(\mu) = \widetilde{\Phi}_{\tau_{\mu}(t)}(\mu)
 \eeq

\begin{prop}
The map $\Phi$ defined by (\ref{eq:defPhi}) is an injective semi-flow  on ${\cal P}(\MM)$ and
for all $\mu \in {\cal P}(\MM),$
$t \mapsto \Phi_t(\mu)$ is the unique  weak solution to (\ref{eq:ODE}) with initial condition $\mu.$
\end{prop}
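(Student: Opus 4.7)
The strategy is to show that the rescaled trajectory $\widetilde{\Phi}_t(\mu) = \mu e^{t \Aa}/(\mu g_t)$ is already a semi-flow on ${\cal P}(\MM)$, and that the time change $\tau_\mu$ turns it into a weak solution of the ODE. First I would check that $\widetilde{\Phi}_t(\mu) \in {\cal P}(\MM)$: since $\Aa$ preserves the positive cone of ${\cal C}(\MM,\R)$, so does $e^{t\Aa} = \sum_{n \geq 0} t^n \Aa^n/n!$, hence $\mu e^{t\Aa}$ is a positive measure, and $g_t \geq \1$ ensures $\mu g_t \geq 1 > 0$. The semigroup law $e^{(s+t) \Aa} = e^{s\Aa} e^{t\Aa}$ together with the identity $\widetilde{\Phi}_s(\mu)\, g_t = \mu g_{s+t}/\mu g_s$ then yields $\widetilde{\Phi}_t \circ \widetilde{\Phi}_s = \widetilde{\Phi}_{s+t}$. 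Since $s_\mu'(t) = \widetilde{\Phi}_t(\mu)\Aa\1 \geq 1$ by Proposition \ref{lem:explicitpimu}(i), the function $s_\mu$ is a $C^\infty$ increasing bijection of $\R^+$ with smooth inverse $\tau_\mu$.

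Next I would transfer the semi-flow property to $\Phi$. Setting $\nu = \Phi_t(\mu) = \widetilde{\Phi}_{\sigma}(\mu)$ with $\sigma = \tau_\mu(t)$, the change of variable $s' = s + \sigma$ combined with the semi-flow identity for $\widetilde{\Phi}$ gives
\[
s_\nu(r) = \int_0^r \widetilde{\Phi}_{s+\sigma}(\mu)\Aa\1 \, ds = s_\mu(r+\sigma) - t,
\]
hence $\tau_\nu(u) = \tau_\mu(u+t) - \sigma$ and $\Phi_u(\Phi_t(\mu)) = \widetilde{\Phi}_{\tau_\nu(u) + \sigma}(\mu) = \Phi_{u+t}(\mu)$. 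The ODE is then verified by direct differentiation: using $\sigma'(t) = 1/(\Phi_t(\mu) \Aa\1)$ one finds
\[
\frac{d}{dt} \Phi_t(\mu) f = \sigma'(t) \bigl[\Phi_t(\mu) \Aa f - (\Phi_t(\mu) f)\,(\Phi_t(\mu) \Aa\1) \bigr] = \frac{\Phi_t(\mu) \Aa f}{\Phi_t(\mu) \Aa \1} - \Phi_t(\mu) f,
\]
which equals $\Pi_{\Phi_t(\mu)} f - \Phi_t(\mu) f$ by Proposition \ref{lem:explicitpimu}(ii). In particular, the total mass satisfies $\frac{d}{dt}\Phi_t(\mu)\1 = 0$, consistent with $\Phi_t(\mu) \in {\cal P}(\MM)$.

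Finally, uniqueness of weak solutions (and hence injectivity of each $\Phi_t$) follows from the Lipschitz continuity of $\mu \mapsto \Pi_\mu$ for the total variation norm (Proposition \ref{lem:explicitpimu}(iii)). The vector field $\nu \mapsto -\nu + \Pi_\nu = -\nu + \nu\Aa/(\nu\Aa\1)$ extends to a locally Lipschitz map on an open neighborhood of ${\cal P}(\MM)$ in the Banach space $({\cal M}_s(\MM), \|\cdot\|_{\textrm{TV}})$, since $\nu \mapsto \nu \Aa \1$ is bounded away from zero there. The Cauchy--Lipschitz theorem in this Banach setting then yields both uniqueness and local backward solvability; the latter forces $\Phi_t$ to be injective, since two initial conditions reaching the same point at time $t$ must coincide at time $0$. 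I expect the main obstacle to be the correct bookkeeping of the nonlinear time change $\tau_\mu$ when verifying the cocycle identity $\Phi_u \circ \Phi_t = \Phi_{u+t}$; once that step is in place, the differentiation and Cauchy--Lipschitz arguments proceed cleanly.
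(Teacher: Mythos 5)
Your proof is largely correct, and several of its ingredients are genuinely different from the paper's. You verify the cocycle identity $\Phi_u \circ \Phi_t = \Phi_{t+u}$ directly by bookkeeping of the nonlinear time change ($s_\nu(r) = s_\mu(r+\sigma) - t$, $\tau_\nu(u) = \tau_\mu(u+t) - \sigma$), whereas the paper deduces the semi-flow property from uniqueness of weak solutions at the very end. Likewise you obtain injectivity from backward uniqueness in the Cauchy--Lipschitz theory, whereas the paper proves injectivity with a short direct argument (Step 2) by comparing $\tau_\mu(t)$ and $\tau_\nu(t)$ and using that $s_\mu(\cdot)$ is strictly increasing. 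Your calculations for the semi-flow law of $\widetilde{\Phi}$, the time-change identities, and the derivative of $\Phi_t(\mu) f$ are all correct.

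However, there is a genuine gap: you never verify the \emph{joint continuity} of $(t,\mu) \mapsto \Phi_t(\mu)$ for the weak* topology on ${\cal P}(\MM)$, which is part of the definition of a semi-flow. This does not follow from the Lipschitz estimates in total variation that underlie your Cauchy--Lipschitz argument, because weak* convergence of $\mu_n \to \mu$ does \emph{not} imply $\|\mu_n - \mu\|_{\textrm{TV}} \to 0$; a Gronwall bound $\|\Phi_t(\mu) - \Phi_t(\nu)\|_{\textrm{TV}} \leq e^{Lt}\|\mu-\nu\|_{\textrm{TV}}$ is therefore useless for concluding weak* continuity in the $\mu$-variable. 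The paper's Step 1 handles this point explicitly, by splitting $|\mu_n e^{t_n \Aa} f - \mu e^{t\Aa} f|$ using strong continuity of $t \mapsto e^{t\Aa}$ on ${\cal C}(\MM)$ and weak* convergence $\mu_n \to \mu$, then propagating continuity through $s_\mu$, $\tau_\mu$ and finally $\Phi$. You need some version of this argument.

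A secondary and more easily repaired point: the Cauchy--Lipschitz theorem gives uniqueness among TV-absolutely-continuous curves, but a ``weak solution'' is a priori only weak*-continuous. To bridge the gap one should note that the integral identity $\xi(t)f - \xi(t')f = \int_{t'}^{t}\bigl(-\xi(s)f + \Pi_{\xi(s)}f\bigr)\,ds$ together with $\|f\|_\infty \leq 1$ and the fact that $\xi(s), \Pi_{\xi(s)} \in {\cal P}(\MM)$ already forces $\|\xi(t)-\xi(t')\|_{\textrm{TV}} \leq 2|t-t'|$ (using the Riesz duality so that the TV norm is a supremum over \emph{continuous} $f$), so weak solutions are automatically TV-Lipschitz and your Banach-space framework applies. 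The paper takes a slightly more economical route, establishing merely the measurability of $t \mapsto \|\mu_t - \nu_t\|_{\textrm{TV}}$ by separability of ${\cal C}(\MM)$ and then invoking the measurable version of Gronwall's lemma from Ethier--Kurtz.
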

\begin{proof}
{\textbf{Step 1} (Continuity of $\Phi$) :}
 Let $\mu_n \rightarrow \mu$ in ${\cal P}(\MM)$ and $t_n\rightarrow t.$ Then for all $f \in {\cal C}(\MM)$
$$|\mu_n e^{t_n \Aa} f - \mu e^{t \Aa} f| \leq |\mu_n e^{t_n \Aa} f - \mu_n e^{t \Aa} f| + |\mu_n e^{t \Aa} f -  \mu e^{t \Aa} f|$$
 $$\leq \| e^{t_n \Aa}  -  e^{t \Aa}\|_{\infty} \|f\|_{\infty} + |\mu_n e^{t \Aa} f -  \mu e^{t \Aa} f|.$$ The second term goes to zero because $\mu_n  \rightarrow \mu$ and the first one by strong continuity of $t \mapsto e^{t \Aa}.$ This easily implies that the maps $(t,\mu) \rightarrow \widetilde{\Phi}_t(\mu)$  and  $(t,\mu) \rightarrow s_{\mu}(t)$ are  continuous.
The continuity of the  latter combined with the relation $s_{\mu_n} \circ \tau_{\mu_n}(t_n) = t_n$ implies that every limit point of $\{\tau_{\mu_n}(t_n)\}$ equals $\tau_{\mu}(t);$ but since
$\tau_{\mu}(t) \leq t$ (because $s_{\mu}(t) \geq t$) the sequence $\{\tau_{\mu_n}(t_n)\}$ is bounded and this proves the continuity of $(t,\mu) \rightarrow \tau_{\mu}(t).$ Continuity of $\Phi$ follows.\smallskip

\noindent {\textbf{Step 2}  (Injectivity of $\Phi$):} Suppose $\Phi_t(\mu) = \Phi_t(\nu)$ for some $t \geq 0, \mu, \nu \in {\cal P}(\MM).$ Set $\tau = \tau_{\mu}(t)$ and $\sigma = \tau_{\nu}(t).$ {Assume $\sigma \ge \tau.$ Multiplying the  equality $\widetilde{\Phi}_{\tau}(\mu) = \widetilde{\Phi}_{\sigma}(\nu)$ by $e^{-\tau \Aa}$ shows that $\mu = \widetilde{\Phi}_{\sigma-\tau}(\nu).$
 Thus
\begin{align*}
t&=s_\mu(\tau)=\int_0^\tau \widetilde{\Phi}_{s+\sigma-\tau}(\nu) \Aa 1ds=\int_0^\sigma \widetilde{\Phi}_{s}(\nu) \Aa 1ds - \int_0^{\sigma-\tau} \widetilde{\Phi}_{s}(\nu) \Aa 1ds \\
&=t- s_\nu(\sigma-\tau)
\end{align*}
This implies that $\tau = \sigma,$ hence $\mu = \nu.$}\smallskip

\noindent {\textbf{Step 3} ( $t \rightarrow  \Phi_t(\mu)$ is a weak solution):} The mappings $t \rightarrow \widetilde{\mu}_t  := \widetilde{\Phi}_t(\mu)$ and $t \rightarrow \mu_t  := \Phi_t(\mu)$ are $C^{\infty}$ from $\R^+$ into ${\cal M}_s(M).$ Furthermore,
$$\dot{\widetilde{\mu}}_t = \widetilde{\mu}_t \Aa - (\widetilde{\mu}_t \Aa \1) \widetilde{\mu_t} = \dot{s}_{\mu}(t) (-\widetilde{\mu_t} + \Pi_{\widetilde{\mu_t}}),$$
so that
$$\dot{\mu}_t = - \mu_t + \Pi_{\mu_t}$$ and, in particular,
 $$\mu_t f - \mu_0 f = \int_0^t (-\mu_s f + \Pi_{\mu_s} f) ds$$ for all $f \in {\cal C}(\MM).$\smallskip

\noindent {\textbf{Step 4} (Uniqueness and flow property):}
Let $\{\mu_t\}$ and $\{\nu_t\}$ be two weak solutions of (\ref{eq:ODE}). By separability of ${\cal C}(\MM)$, $\|\mu_t - \nu_t\|_{\textrm{TV}} = \sup_{f \in {\cal H}}  |\mu_t f - \nu_t f|$ for some countable set ${\cal H} \subset {\cal C}(\MM).$ This shows that $t \rightarrow \|\mu_t - \nu_t\|_{\textrm{TV}}$ is measurable, as a countable supremum of continuous functions. Thus, by Lipschitz continuity of $\mu \mapsto \Pi_{\mu}$ with respect to the total variation distance (see Lemma \ref{lem:explicitpimu})
we get that
$$\|\mu_t - \nu_t\|_{\textrm{TV}} \leq \|\mu_0 - \nu_0\|_{\textrm{TV}} + L \int_0^t \|\mu_s - \nu_s\|_{\textrm{TV}} ds$$ for some $L > 0.$ Hence, by the measurable version of Gronwall's inequality (\cite[Theorem 5.1 of the Appendix]{EthierKurtz})
$$\|\mu_t - \nu_t\|_{\textrm{TV}} \leq e^{Lt} \|\mu_0 - \nu_0\|_{\textrm{TV}}$$ and hence there is at most one weak solution with initial condition $\mu_0.$
This, combined with (ii) above shows that $t \rightarrow \Phi_t(\mu)$ is the unique weak solution to (\ref{eq:ODE}).
The semi-flow property $\Phi_{t+s} = \Phi_t \circ \Phi_s$ follows directly from this uniqueness.
\end{proof}
\subsection{Attractors and attractor free sets}
A set $\cal K \subset {\cal P}(\MM)$ is called {\em invariant} under $\Phi$ (respectively {\em positively invariant}) if $\Phi_t(\cal K) = \cal K$ (respectively $\Phi_t({\cal K}) \subset {\cal K}),$ for all $t \geq 0.$

 If $\cal K$ is compact and invariant, then by injectivity of $\Phi$ and compactness, each map $\Phi_t$ maps homeomorphically $\cal K$ onto itself.
 In this case we set $$\Phi^{\cal K}_{t} =  \Phi_t|_{\cal K}$$ for $t \geq 0$ and $$\Phi^{\cal K}_{t}=  (\Phi_{-t}|_{\cal K})^{-1}$$ for all $t \leq 0.$
 It is not hard to check that
 $\Phi^{\cal K} : \R \times {\cal K} \mapsto {\cal K}$ is a flow, $i.e.$ a continuous map such that $\Phi^{\cal K}_t \circ \Phi^{\cal K}_s = \Phi^{\cal K}_{t+s}$ for all $t,s \in \R.$

An {\em attractor} for $\Phi$ is a non empty compact invariant set $A$ having a neighborhood $U_A$ (called a fundamental neighborhood) such that
for every neighborhood $V$ of $A$ there exists $t \geq 0$ such that
$$s \geq t \Rightarrow \Phi_s(U_A) \subset V.$$
Equivalently, if $d$ is a distance metrizing ${\cal P}(\MM)$ $$\lim_{t \rightarrow \infty} d(\Phi_t(\mu), A) = 0,$$
{\bf uniformly} in $\mu \in U_A.$

The {\em basin of attraction} of $A$ is the set $\mathsf{Bas}(A)$ consisting  of points $\mu \in {\cal P}(\MM)$ such that {$\lim_{t \rightarrow \infty} d(\Phi_t(\mu), A) = 0$.}

Attractor $A$ is called {\em global} if its basin is the full space ${\cal P}(\MM).$ It is not hard to verify that there is always a (unique)
{\em global attractor} for $\Phi$ given as
$$A = \bigcap_{t \geq 0} \Phi_t({\cal P}(\MM)).$$

{If ${\cal K}$ denotes} a compact invariant set, an attractor for $\Phi^{\cal K}$ is a non empty compact invariant set $A \subset {\cal K}$ having a neighborhood $U_A$  such that
for every neighborhood $V$ of $A$ there exists $t \geq 0$ such that
$$s \geq t \Rightarrow \Phi_s(U_A \cap {\cal K}) \subset V.$$ If furthermore  $A \neq {\cal K},$ $A$ is called a proper attractor.

{${\cal K}$ is called} {\em attractor free} provided ${\cal K}$ is compact invariant and $\Phi^{\cal K}$ has no proper attractors. Attractor free sets coincide with {\em internally chain transitive sets} and characterize the limit sets of {\em asymptotic pseudo trajectories} (see \cite{BH96,B99}). {Recall that the limit set of $(\mu_n)$ is defined by
$$
L = \bigcap_{n \geq 0} \overline{\left\{\mu_k \ | \ k\geq n \right\}}.
$$
}
 In the present context, by Theorem \ref{prop:asymptraj} of Section \ref{sect:APT}, this  implies  that
 \begin{theo}[{Characterisation of $L$}]
 \label{th:limset}
 Under Hypotheses \ref{hypoK} and \ref{hypogain}, the limit set of $\{\mu_n\}$ is almost surely attractor free for $\Phi.$
 \end{theo}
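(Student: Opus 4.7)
The plan is to reduce Theorem \ref{th:limset} to the combination of two ingredients: Theorem \ref{prop:asymptraj} (whose content is precisely that a suitable continuous-time interpolation of $(\mu_n)$ is an asymptotic pseudo-trajectory of $\Phi$), and the general dynamical systems fact that limit sets of precompact asymptotic pseudo-trajectories are internally chain transitive, hence attractor free. Since the target statement is exactly the bridge between the "algorithmic'' analysis (Section \ref{sect:APT}) and the "ODE'' analysis (Section \ref{sect:ODE}), the proof should be short once these two pieces are in hand.

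First I would introduce the rescaled time $\tau_n = \sum_{k=1}^{n} \gamma_k$ and define the continuous interpolation $\overline{\mu} : \R^+ \to {\cal P}(\MM)$ by $\overline{\mu}(\tau_n + s) = \mu_n$ for $s \in [0, \gamma_{n+1})$ (or a piecewise affine variant, which is harmless because ${\cal P}(\MM)$ sits in the locally convex space ${\cal M}_s(\MM)$). By Hypothesis \ref{hypogain} we have $\tau_n \to \infty$, so this interpolation is well defined on all of $\R^+$. Theorem \ref{prop:asymptraj} then gives that $\overline{\mu}$ is almost surely an asymptotic pseudo-trajectory of $\Phi$, i.e. for every $T > 0$,
\begin{equation*}
\lim_{t \to \infty} \sup_{0 \leq s \leq T} d\bigl(\overline{\mu}(t+s), \Phi_s(\overline{\mu}(t))\bigr) = 0 \quad \text{a.s.},
\end{equation*}
where $d$ is any distance metrizing the weak-$*$ topology on ${\cal P}(\MM)$.

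Next I would invoke the Benaim--Hirsch characterization of limit sets of asymptotic pseudo-trajectories (\cite[Theorem 5.7]{B99}, based on \cite{BH96}): if $X : \R^+ \to E$ is an APT of a semi-flow $\Phi$ on a metric space $E$ and if $X$ has a precompact image, then the limit set
\begin{equation*}
L(X) = \bigcap_{t \geq 0} \overline{\{X(s) : s \geq t\}}
\end{equation*}
is internally chain transitive. Here ${\cal P}(\MM)$ is compact (by compactness of $\MM$ and Prohorov's theorem), so precompactness is automatic. The limit set $L$ of the discrete sequence $(\mu_n)$ coincides with $L(\overline{\mu})$ because interpolation only inserts convex combinations of consecutive terms and $\gamma_n \to 0$. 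Therefore $L$ is almost surely internally chain transitive for $\Phi$.

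Finally I would recall that internal chain transitivity is equivalent to the attractor-free property: by \cite[Proposition 5.3]{B99} (or the original statement in \cite{BH96}), a compact invariant set ${\cal K}$ is internally chain transitive if and only if $\Phi^{{\cal K}}$ admits no proper attractor, which is the definition of "attractor free'' adopted just before the statement of Theorem \ref{th:limset}. This immediately yields the conclusion. The only non-routine step in this scheme is Theorem \ref{prop:asymptraj}, which is where the probabilistic content lives; its proof (deferred to Section \ref{sect:APT}) will have to combine the Lipschitz continuity $\mu \mapsto \Pi_\mu$ in total variation (Proposition \ref{lem:explicitpimu}(iii)) with a martingale estimate on $\varepsilon_n = \delta_{X_{n+1}} - \Pi_{\mu_n}$, the condition $\gamma_n \ln n \to 0$ being exactly what is needed to make the Borel--Cantelli/maximal-inequality argument work uniformly on finite time windows. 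That step, rather than the present theorem, is where the main technical effort is concentrated.
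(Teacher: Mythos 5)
Your proposal is correct and follows essentially the same route as the paper: the paper proves Theorem \ref{prop:asymptraj} (that the interpolated process $\widehat{\mu}$ is an asymptotic pseudo-trajectory of $\Phi$) and then derives Theorem \ref{th:limset} in a short remark by citing the standard fact that limit sets of precompact asymptotic pseudo-trajectories are internally chain transitive, which the paper has already noted coincides with being attractor free. The only cosmetic difference is that the paper uses the piecewise affine interpolation $\widehat{\mu}$ rather than the piecewise constant $\overline{\mu}$, which you correctly observe is immaterial.
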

This theorem,  combined with  elementary properties of attractor free sets, gives the following (more tractable) result.
\begin{coro}[{Limit set and attractors}]
\label{cor:limset} Assume Hypotheses \ref{hypoK} and \ref{hypogain}. Let $L$ be the limit set of $\{\mu_n\}.$ With probability one,
 \begin{itemize}
 \item[(i)]  $L$ is a compact connected invariant set.
 \item[(ii)] If $A$ is an attractor and $L \cap \mathsf{Bas}(A) \neq \emptyset,$ then $L \subset A.$  In particular,  $L$ is contained in the global attractor of $\Phi.$
 \end{itemize}
 \end{coro}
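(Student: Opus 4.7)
My plan is to derive the corollary directly from Theorem \ref{th:limset}, which asserts that $L$ is almost surely attractor free for $\Phi$, by invoking general properties of attractor free (equivalently, internally chain transitive) sets as developed in \cite{B99}.

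For part \textit{(i)}, compactness and invariance of $L$ are already built into the notion of attractor-freeness, so only connectedness requires an argument. I would first observe, from the update formula \eqref{eq:mupasapas}, the elementary bound
$$\|\mu_{n+1}-\mu_n\|_{\textrm{TV}} \leq 2\gamma_{n+1},$$
whose right-hand side tends to zero by Hypothesis \ref{hypogain}. Since ${\cal P}(\MM)$ is a compact metric space whose weak$^*$ metric is dominated by the total variation norm, the consecutive distances of $(\mu_n)$ also vanish in any metric compatible with the topology of ${\cal P}(\MM)$. A classical lemma then yields that the limit set of such a sequence in a compact metric space is connected: were $L$ to split into two disjoint nonempty closed pieces, one could separate them by disjoint open neighborhoods, and the combination of infinitely many visits to each piece with $d(\mu_{n+1},\mu_n) \to 0$ would force accumulation points outside $L$. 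Alternatively, connectedness is itself a standard property of attractor free sets established in \cite{B99}.

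For part \textit{(ii)}, fix an attractor $A$ of $\Phi$ with fundamental neighborhood $U_A$ and suppose $\mu \in L \cap \mathsf{Bas}(A)$. Invariance and closedness of $L$ ensure that the trajectory $(\Phi_t(\mu))_{t\ge 0}$ remains in $L$, while $\Phi_t(\mu) \to A$, so any accumulation point lies in $A \cap L$; in particular $A \cap L$ is a nonempty compact invariant subset of $L$. The key (and main technical) step is then to verify that $A \cap L$ is an attractor for the restricted semi-flow $\Phi^L$ in the quantitative sense required: namely, $U_A \cap L$ serves as a fundamental neighborhood of $A \cap L$ in $L$, and the uniform attraction of $U_A$ toward $A$ under $\Phi$ transfers, thanks to the invariance of $L$, to uniform attraction of $U_A \cap L$ toward $A \cap L$ under $\Phi^L$. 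Attractor-freeness of $L$ then forces $A \cap L$ not to be a proper attractor in $L$, so $A \cap L = L$, i.e. $L \subset A$. The ``in particular'' statement is obtained by applying this with $A$ the global attractor of $\Phi$, whose basin is all of ${\cal P}(\MM)$ and therefore meets $L$.
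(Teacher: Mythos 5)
Your proof is correct and follows the same route as the paper, which simply asserts that the corollary is Theorem \ref{th:limset} combined with ``elementary properties of attractor free sets'' from \cite{B99,BH96}; you are reproving precisely those standard facts (connectedness of the limit set of a sequence with vanishing increments, and the result that an internally chain transitive set meeting the basin of an attractor $A$ must be contained in $A$). The one step worth spelling out is the uniform attraction of $U_A \cap L$ toward $A \cap L$ under $\Phi^L$: it follows by compactness of ${\cal P}(\MM)$, since for any $x_n \in U_A \cap L$ and $s_n \to \infty$ every subsequential limit of $\Phi_{s_n}(x_n)$ lies in $A$ (uniform attraction of $U_A$) and in $L$ (invariance and closedness of $L$), hence in $A \cap L$, which rules out escape from any open neighborhood of $A \cap L$.
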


{Note that in the two previous theorems, we do not assume Hypothesis \ref{hypoH3}. In particular, the previous result {may be true in some settings with several QSDs}. This flexibility is, for instance, used in the proof of Theorem \ref{th:2points}.}

\subsection{Global Asymptotic Stability}
The flow $\Phi$ is called {\em globally asymptotically stable} if its global attractor reduces to a singleton $\{\mu^\star\}.$ Observe that, in such a case,  $\mu^\star$ is necessarily the unique equilibrium of $\Phi,$ hence the unique QSD of $K.$

We shall give here sufficient conditions {ensuring} global asymptotic stability.
The main idea is to relate the (nonlinear) dynamics of $\Phi$ to the (linear) Fokker-Planck equation of a nonhomogeneous Markov process on $\MM.$ This idea  is due to Champagnat and Villemonais in \cite{CV14}  where it  was successfully used to prove the exponential {convergence of the conditioned laws and the exponential} ergodicity of the \textit{$Q$-process} for a general almost surely absorbed Markov process.

For all $t \geq 0$ and  $s \in \R$ let $R_{t,s}$ be the bounded operator defined on ${\cal C}(\MM)$ by
$$R_{t,s} f = \frac{e^{(t-s) \Aa}(f g_s)}{g_t}{=\frac{e^{(t-s) \Aa}(f e^{s \Aa} \1)}{e^{t \Aa} \1}}$$
{where $g$ is defined by \eqref{eq:gt}}.
It is easily checked\footnote{One can also note that $R_{t,s}$ is the resolvent of the
 linear differential equation on ${\cal C}(\MM) ~ \dot{u} = \frac{1}{g_t}(\Aa (u g_t) - (\Aa u) g_t)$. {{This explains the unusual order for the indices of $R$ ($w.r.t.$ the standard notation of in-homogeneous Markov processes).}}} that $R_{t,t} = Id$ and $R_{t,s} \circ R_{s, u} = R_{t,u}$ for all $t, s \geq 0$ and $u \in \R.$
Furthermore, for all $t \geq s \geq 0$ $R_{t,s}$ is a Markov operator. That is $R_{t,s} \1 = \1$ and $R_{t,s} f \geq 0$ whenever $f \geq 0.$\smallskip

\noindent To shorten notation we set $$R_t = R_{t,0}.$$ The flow $\widetilde{\Phi}$ and the family $\{R_t\}_{t \geq 0}$ are
 linked by the relation
$$\widetilde{\Phi}_t(\delta_x) = \delta_x R_t$$ for all $t \geq 0$ and $x \in \MM.$
However, note that for an arbitrary $\mu \in {\cal P}(\MM)$ $\widetilde{\Phi}_t(\mu)$ and $\mu R_t$ are not  equal. {Indeed, recall that $\mu R_t f = \int_{\MM} R_tf(x) \mu(dx)$.}
\begin{lem}
\label{lem:suffisGAS}
Let $d_\omega$ be any distance on ${\cal P}(\MM)$ metrizing the weak* convergence.  Assume that
$$\Delta_t := \sup_{x,y \in \MM} d_\omega(\delta_x R_t, \delta_y R_t) \rightarrow 0$$ as $t \rightarrow \infty.$
Then  $\Phi$ is globally asymptotically stable.
\end{lem}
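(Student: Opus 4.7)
The plan is to exploit the hypothesis, a uniform contractivity condition on the auxiliary linear family $(R_t)_{t \ge 0}$, to establish uniform asymptotic stability of the auxiliary semiflow $\widetilde{\Phi}$, and then transfer the conclusion to $\Phi$ via the time change $\tau_\mu$.

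My first step is to observe that, since $d_\omega$ metrizes weak$^*$ convergence and ${\cal P}(\MM)$ is compact, the condition $\Delta_t \to 0$ implies that for every $f \in {\cal C}(\MM, \R)$,
$$
\omega_f(t) := \sup_{x,y \in \MM} |\delta_x R_t f - \delta_y R_t f| \xrightarrow{t \to \infty} 0,
$$
by a subsequence-extraction argument in the compact space ${\cal P}(\MM)$. Next, using $g_0 = \1$ to write $\delta_x R_t f = e^{t \Aa}(f)(x)/g_t(x)$, so that $\mu e^{t\Aa}(f) = \int g_t(x)\, \delta_x R_t(f)\, \mu(dx)$, I would derive the convex representation
$$
\widetilde{\Phi}_t(\mu) = \int_{\MM} \delta_x R_t\, \alpha_\mu^t(dx), \qquad \alpha_\mu^t(dx) := \frac{g_t(x)}{\mu(g_t)}\, \mu(dx),
$$
where $\alpha_\mu^t$ is a probability measure on $\MM$. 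This immediately yields $\sup_{\mu, \nu} |\widetilde{\Phi}_t(\mu) f - \widetilde{\Phi}_t(\nu) f| \le \omega_f(t) \to 0$ for every $f \in {\cal C}(\MM, \R)$, and then, via a standard diagonal argument over a countable dense family in the unit ball of ${\cal C}(\MM, \R)$,
$$
D(t) := \sup_{\mu, \nu \in {\cal P}(\MM)} d_\omega\bigl(\widetilde{\Phi}_t(\mu), \widetilde{\Phi}_t(\nu)\bigr) \xrightarrow{t \to \infty} 0.
$$

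The main step is to promote this uniform contractivity of pairs of orbits to a genuine common limit. I would first verify, using $g_{t+s} = e^{s\Aa} g_t$, that $\widetilde{\Phi}$ itself is a continuous semiflow on ${\cal P}(\MM)$. Let $W$ denote its $\omega$-limit set starting from any fixed $\delta_{y_0}$; the uniform bound $D(t) \to 0$ ensures that every initial measure has the same $\omega$-limit set $W$. Then $W$ is compact, nonempty and invariant, and a standard argument for continuous semiflows on compact metric spaces gives the surjectivity $\widetilde{\Phi}_t(W) = W$ for all $t \ge 0$. For any $\alpha, \beta \in W$, writing $\alpha = \widetilde{\Phi}_t(\alpha_t)$ and $\beta = \widetilde{\Phi}_t(\beta_t)$ for some $\alpha_t, \beta_t \in W$, the inequality $d_\omega(\alpha, \beta) \le D(t) \to 0$ forces $\alpha = \beta$, so $W = \{\mu^\star\}$ for a unique $\mu^\star \in {\cal P}(\MM)$. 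Consequently $\widetilde{\Phi}_t(\mu) \to \mu^\star$ uniformly in $\mu$. I expect this to be the crux of the argument: uniform contractivity of pairs does not by itself yield convergence, and the invariance of $W$ together with the surjectivity of $\widetilde{\Phi}_t$ on $W$ is the essential additional ingredient.

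To conclude, Proposition \ref{lem:explicitpimu}\textit{(i)} gives $\widetilde{\Phi}_t(\mu)\Aa \1 \le \|\Aa\|_\infty$, so $s_\mu(t) \le \|\Aa\|_\infty\, t$ and therefore $\tau_\mu(t) \ge t/\|\Aa\|_\infty$ uniformly in $\mu$. Since $\Phi_t(\mu) = \widetilde{\Phi}_{\tau_\mu(t)}(\mu)$ and $\tau_\mu(t) \to \infty$ uniformly as $t \to \infty$, the uniform convergence of $\widetilde{\Phi}_s$ to $\mu^\star$ transfers to $\Phi$, yielding $\sup_\mu d_\omega(\Phi_t(\mu), \mu^\star) \to 0$. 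Since $\mu^\star$ is fixed by $\widetilde{\Phi}$ it is also fixed by $\Phi$, so $\{\mu^\star\}$ is the global attractor of $\Phi$, proving that $\Phi$ is globally asymptotically stable.
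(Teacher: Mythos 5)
Your proof is correct, and the key step differs from the paper's. Both arguments first establish (via the convex representation $\widetilde{\Phi}_t(\mu) = \int \delta_x R_t\, \alpha_\mu^t(dx)$, which the paper uses implicitly through the identity $\widetilde{\Phi}_t(\mu) f - \mu^\star f = \mu\bigl((R_t f - \mu^\star f) g_t\bigr)/\mu g_t$) that the pairwise separation of $\widetilde{\Phi}_t$-orbits is controlled by $\Delta_t$, and both finish identically with the time change $\tau_\mu$. The divergence is in how one upgrades this uniform contraction of \emph{pairs} to the existence of a genuine common limit — precisely the point you flag as the crux. The paper exploits the cocycle structure of the family $(R_{t,s})$: since $R_{t+s} = R_{t+s,t}\circ R_t$ with $R_{t+s,t}$ Markov, one has $d_\omega(\nu R_{t+s}, \nu R_t) = d_\omega\bigl((\nu R_{t+s,t}) R_t,\, \nu R_t\bigr) \le \Delta_t$, so $\{\nu R_t\}_{t\ge 0}$ is Cauchy in the compact metric space ${\cal P}(\MM)$ and converges to a point $\mu^\star$; this yields the quantitative estimate $d_\omega(\widetilde{\Phi}_t(\mu),\mu^\star)\le\Delta_t$ directly. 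You instead appeal to the semiflow property of $\widetilde{\Phi}$ and standard $\omega$-limit set theory: the common $\omega$-limit set $W$ is compact, invariant, with $\widetilde{\Phi}_t|_W$ surjective, and $D(t)\to 0$ forces $W$ to be a singleton. Both routes are valid; the paper's Cauchy argument is more elementary, avoids invoking surjectivity of semiflows on limit sets, and gives the explicit rate $\Delta_t$ for free, whereas your argument is more structurally dynamical and does not need the cocycle identity at all, only the semigroup law $\widetilde{\Phi}_{t+s}=\widetilde{\Phi}_t\circ\widetilde{\Phi}_s$. One minor presentational advantage of the paper's choice of the Fortet--Mourier metric is that the equality $\Delta_t = \sup_{\mu,\nu}d_\omega(\mu R_t,\nu R_t)$ is immediate, whereas your approach replaces this by the compactness/diagonal argument passing through the $\omega_f(t)$ — correct but a bit heavier.
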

\begin{proof}
{By compactness of $\MM$ the condition $\Delta_t \rightarrow 0$ is independent of the choice of $d_\omega.$} We can then assume that $d_{\textrm{FM}}$ is the {\em Fortet-Mourier distance} (see e.g~\cite{dudley,RKSF}) given as
\beq
\label{eq:defFM}
d_\omega(\mu,\nu) =  \sup \{ |\mu f - \nu f | \: : \|f\|_{\infty} + \textrm{Lip}(f) \leq 1 \}.
\eeq
where  $\textrm{Lip}(f)$ stands for  $\sup_{x \neq y}  \frac{|f(x) -f(y)|}{d(x,y)}.$

Since
$$|\mu R_t f - \nu R_t f| = | \int  (R_t(x) - R_t f (y)) d\mu(x) d \nu(y) | \leq  \sup_{x,y \in \MM} |R_t f(x) - R_t f (y)|,$$
it follows from (\ref{eq:defFM})  that
\beq
\label{eq:supsurmunu}
\Delta_t = \sup_{\mu, \nu \in {\cal P}(M)} d_\omega(\mu R_t, \nu R_t).
\eeq
Fix $\nu \in {\cal P}(\MM).$ Then $$\sup_{s \geq 0} d_\omega(\nu R_{t+s}, \nu R_t) ) = \sup_{s \geq 0} d_\omega( (\nu R_{t+s,t}) R_t, \nu R_t) \leq \Delta_t.$$  This shows that $\{\nu R_t\}_{t   \geq 0}$ is a Cauchy sequence in ${\cal P}(\MM).$ Then $\nu R_t \rightarrow \mu^\star$ for some $\mu^\star$ and for all $\mu \in {\cal P}(\MM)$
$$d_\omega(\mu R_t, \mu^\star) \leq \Delta_t.$$
Now, for all $f \in {\cal C}(\MM)$ {$$|\widetilde{\Phi}_t(\mu) f - \mu^\star f|   =  \left|\frac{\mu \left( (R_t f - \mu^\star f) g_t \right)}{\mu g_t}\right|
 \leq \|R_t f - \mu^\star f\|_{\infty} =
\sup_x |\delta_x R_t f - \mu^\star f|.$$} Therefore $d_\omega(\widetilde{\Phi}_t(\mu), \mu^\star) \leq \Delta_t$
and
$$d_\omega(\Phi_t(\mu), \mu^\star)  \leq \Delta_{\tau_{\mu}(t)} \leq \sup\Big\{ \Delta_s \: : s \geq \frac{t}{\|\Aa\|_{\infty}}\Big \}$$ where the last inequality  follows from the fact that $\dot{s}_{\mu}(t) \leq \|\Aa\|_{\infty}.$ This proves that $\{\mu^\star\}$ is a global attractor for $\Phi.$
\end{proof}
\noindent Recall that $g_t(x) = e^{t \Aa} \1$ (see equation (\ref{eq:gt})).
\begin{lem}
\label{lem:GAS2}
Assume  $\mathbf{H_1}, \mathbf{H_2}, \mathbf{H_3}.$  Assume furthermore that
$$\sum_n \frac{\Psi(g_n)}{\|g_n\|_{\infty}} = \infty$$ where $\Psi$ is the probability measure given by (\ref{eq:smallset}).
Then $\Phi$ is globally asymptotically stable.
\end{lem}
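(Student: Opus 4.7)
The plan is to apply Lemma~\ref{lem:suffisGAS}, so it is enough to show that $\Delta_t\to 0$, and since total variation dominates any $d_\omega$ metrizing the weak topology, I will work directly in $\|\cdot\|_{\mathrm{TV}}$. The idea is a Doeblin-type coupling on each unit-time kernel $R_{k+1,k}$, with minorization rates $\alpha_k$ whose series diverges, combined with the standard multiplicative iteration $\Delta_t \leq 2\prod_k(1-\alpha_k)$.

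The first (and central) step is a \emph{uniform} minorization $e^\Aa(x,\cdot)\geq \gamma\,\Psi$ for some $\gamma>0$ independent of $x\in\MM$. Starting from $\Aa(z,\cdot)\geq\varepsilon\Psi$ for $z\in U$ (which is $\mathbf{H_3}$), one gets
\[
\Aa^2(x,dy)\;\geq\;\int_U \Aa(x,dz)\,\varepsilon\Psi(dy)\;=\;\varepsilon\,\Aa(x,U)\,\Psi(dy),
\]
so it suffices to prove $\beta:=\inf_{x\in\MM} \Aa(x,U)>0$. Since $U$ is open and $K$ is Feller, each function $K^n(\cdot,U)$ is lower semi-continuous, hence so is $\Aa(\cdot,U)=\sum_n K^n(\cdot,U)$; accessibility of $U$ makes this function strictly positive everywhere, and compactness of $\MM$ promotes pointwise positivity to a strictly positive infimum $\beta$. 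Combined with $e^\Aa\geq \Aa^2/2$, this gives $\gamma:=\varepsilon\beta/2$.

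Using the explicit formula $R_{k+1,k}(x,dy)=e^\Aa(x,dy)\,g_k(y)/g_{k+1}(x)$ and the elementary bound $\|g_{k+1}\|_\infty\leq \|g_1\|_\infty\|g_k\|_\infty$ (immediate from $g_{k+1}(x)=\int e^\Aa(x,dy)g_k(y)$), I then deduce
\[
R_{k+1,k}(x,\cdot)\;\geq\;\alpha_k\,\rho_k, \qquad \alpha_k\;:=\;\frac{\gamma}{\|g_1\|_\infty}\cdot\frac{\Psi(g_k)}{\|g_k\|_\infty},
\]
where $\rho_k$ is a probability on $\MM$ that depends on $k$ but \emph{not} on $x$. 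The hypothesis of the lemma forces $\sum_k\alpha_k=+\infty$. Writing $R_{t,0}=R_{t,n}R_{n,n-1}\cdots R_{1,0}$ with $n=\lfloor t\rfloor$ and iterating the Markov contraction $\|\mu P-\nu P\|_{\mathrm{TV}}\leq (1-\alpha)\|\mu-\nu\|_{\mathrm{TV}}$ across each $R_{k+1,k}$ yields $\Delta_t\leq 2\prod_{k=0}^{n-1}(1-\alpha_k)\to 0$, and Lemma~\ref{lem:suffisGAS} concludes.

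The main difficulty is precisely the uniform bound $\inf_\MM \Aa(\cdot,U)>0$: it is the point where the non-uniform minorization of $\mathbf{H_3}$ is lifted to a uniform one, and it genuinely requires the three ingredients Feller continuity, accessibility and compactness of $\MM$ simultaneously. Once this is in hand, the rest is the Champagnat--Villemonais style coupling applied to the Doob-transformed semigroup $(R_{t,s})$, with the divergence condition $\sum_n \Psi(g_n)/\|g_n\|_\infty=+\infty$ taking the role usually played by an exponential lower bound on the extinction rate.
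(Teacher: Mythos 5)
Your proof is correct and follows essentially the same route as the paper's: establish a Doeblin minorization $R_{k+1,k}(x,\cdot)\geq\alpha_k\rho_k$ uniformly in $x$ with $\alpha_k$ proportional to $\Psi(g_k)/\|g_k\|_\infty$, iterate the TV contraction across unit-time blocks to get $\Delta_t\leq 2\prod(1-\alpha_k)\to 0$, and invoke Lemma~\ref{lem:suffisGAS}. The only real variation is cosmetic: to lift the local minorization of $\mathbf{H_3}$ to a uniform one, the paper covers $\MM$ by finitely many open sets $U(n_i,\delta)=\{x:K^{n_i}(x,U)>\delta\}$ and reduces to the case $U=\MM$ (then bounding $e^{\Aa}\geq\Aa$ and $e^\Aa g_n\leq e^{\|\Aa\|_\infty}\|g_n\|_\infty$), whereas you argue that $\Aa(\cdot,U)$ is lower semi-continuous and everywhere positive, hence bounded below on the compact $\MM$, and then use $e^\Aa\geq\Aa^2/2$; both are the same compactness-plus-Feller-plus-accessibility argument in slightly different clothing.
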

\begin{proof}
We first assume that $U = \MM$ in condition $\mathbf{H_3}.$ That is $\Aa(x,dy) \geq \epsilon \Psi(dy)$ for all $x \in \MM.$
Then, for all $f \geq 0$ and $n \in \N$
$$R_{n+1,n} f = \frac{e^{\Aa} (f g_n)}{e^{\Aa} g_n} \geq \frac{\Aa (f g_n)}{e^{\|\Aa\|_{\infty}}\| g_n\|_{\infty}} \geq  \epsilon
\frac{\Psi(f g_n)}{e^{\|\Aa\|_{\infty}}\| g_n\|_{\infty}}.$$
Let $\Psi_n \in {\cal P}(\MM)$ be defined as
$\Psi_n(f) = \frac{\Psi(f g_n)}{\Psi(g_n)}.$ {W}e get
$$R_{n+1,n}(x,\cdot) \geq \epsilon_n \Psi_n( \cdot )$$ with {$\epsilon_n = \epsilon e^{- \Vert \Aa \Vert_\infty}\frac{\Psi(g_n)}{\|g_n\|_{\infty}}.$}  Thus, reasoning exactly like in the proof of Lemma \ref{lem3}, for all $\mu,\nu \in {\cal P}(\MM)$
$$\parallel\mu R_{n+1,n} - \nu R_{n+1,n}\parallel_{\textrm{TV}} \leq (1-  \epsilon_n)\parallel\mu - \nu\parallel_{\textrm{TV}}$$
and, consequently,
$$\parallel \delta_x R_{n+1} - \delta_y R_{n+1}\parallel_{\textrm{TV}}  \leq 2 \prod_{k = 0}^n  (1-  \epsilon_k).$$
The condition $\sum_n \epsilon_n = \infty$ then implies that $\parallel \delta_x R_{n+1} - \delta_y R_{n+1}\parallel_{\textrm{TV}} \rightarrow 0$ uniformly in $x,y$ as $t\rightarrow\infty.$ In particular, the assumption, hence the conclusion, of Lemma \ref{lem:suffisGAS} is satisfied.

To conclude the proof it remains to show that there is no loss of generality in assuming that $U = \MM$ in $\mathbf{H_3}.$
 By Feller continuity,  and Portmanteau's theorem, for all $n \in \N$ and $\delta > 0$ the set
 $$U(n,\delta) = \{x \in \MM: \: K^n(x,U) > \delta \}$$ is open.
Thus by  $\mathbf{H_3}$ and compactness of $\MM,$ there exist $\delta > 0$ and $n_1, \ldots, n_k \in \N$ such that $$\MM = \bigcup_{i = 1}^k U(n_i,\delta).$$ Let now $x \in \MM.$ Then $x \in U(n_i, \delta)$ for some $i$ and
 $$\Aa(x,dy) \geq \sum_{n \geq 0} K^{n_i + n}(x,dy) =  \int_U K^{n_i}(x,dz) \Aa(z,dy) \geq \epsilon \delta \Psi(dy).$$
\end{proof}
The next proposition shows that under  $\mathbf{H_1}, \mathbf{H_2}, \mathbf{H_3}$ and $\mathbf{H_4}$, the assumptions of the preceding lemma are satisfied.
\begin{prop}[{Convergence of $\Phi$}]
\label{prop:GAS3}
Assume $\mathbf{H_1}, \mathbf{H_2}, \mathbf{H_3}$ and $\mathbf{H_4}.$  Then the assumptions
of Lemma \ref{lem:GAS2} are satisfied. In particular, $\Phi$ is globally asymptotically stable.
\end{prop}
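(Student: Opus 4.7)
By Lemma \ref{lem:GAS2}, it is enough to show that the series $\sum_n \Psi(g_n)/\|g_n\|_\infty$ diverges under $\mathbf{H_4}$. The plan is to exploit the representation of $g_n = e^{n\Aa}\1$ as a positive combination of the functions $K^m\1$, and then convert the lower bound provided by $\mathbf{H_4}$ into a Jensen-type bound on the ratio $\Psi(g_n)/\|g_n\|_\infty$.

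The first step is to expand $g_n$ using that $\Aa=(I-K)^{-1}$ (Proposition \ref{lem:explicitpimu}) and the combinatorial identity $\Aa^k\1=\sum_{m\ge0}\binom{m+k-1}{k-1}K^m\1$ for $k\ge 1$, which yields
$$g_n=\sum_{m\ge 0}\beta_m(n) K^m\1,\qquad \beta_m(n)=[z^m]e^{n/(1-z)}\ge 0.$$
Next, pick $x_n\in \MM$ such that $g_n(x_n)=\|g_n\|_\infty$ (this is possible by compactness of $\MM$ and continuity of $g_n$ under $\mathbf{H_1}$). Using $\mathbf{H_4}$ together with $\|K^m\1\|_\infty\ge K^m\1(x_n)$, we get $\Psi(K^m\1)\ge C(m)K^m\1(x_n)$, so that dividing $\Psi(g_n)$ by $g_n(x_n)$ yields
$$\frac{\Psi(g_n)}{\|g_n\|_\infty}\ge \int_\N C(m)\,d\nu_n(m)\ge C\Bigl(\int_\N m\,d\nu_n(m)\Bigr),$$
where $\nu_n$ is the probability on $\N$ defined by $\nu_n(\{m\})=\beta_m(n)K^m\1(x_n)/g_n(x_n)$, and the second inequality is Jensen applied to the convex function $C$.

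The main obstacle is the upper bound on the weighted mean $\int m\,d\nu_n$. The key point is a functional calculus trick: differentiating the generating function $\sum_m\beta_m(n)z^m=e^{n/(1-z)}$ in $z$ gives $\sum_m m\beta_m(n)z^m=\frac{nz}{(1-z)^2}e^{n/(1-z)}$, and since $K$ and $\Aa$ commute (both being functions of $K$), this translates, applied to $\1$, into the operator identity
$$\sum_{m\ge 0}m\beta_m(n)K^m\1=nK\Aa^2g_n=n(\Aa^2-\Aa)g_n.$$
Because $\Aa$ is a bounded positive operator with $\|\Aa\|_\infty=\sup_x\E_x[\tau]<\infty$ (Lemma \ref{lem:expect} and Proposition \ref{lem:explicitpimu}(i)), we conclude that $\|nK\Aa^2 g_n\|_\infty\le n\|\Aa\|_\infty^2\|g_n\|_\infty$, and hence $\int m\,d\nu_n\le n\|\Aa\|_\infty^2$.

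Setting $\kappa=\|\Aa\|_\infty^2$, the two previous steps combine into $\Psi(g_n)/\|g_n\|_\infty\ge C(\kappa n)$. Since $C$ is non-increasing, a Riemann-sum comparison gives $\sum_{n\ge 1}C(\kappa n)\ge \kappa^{-1}\int_\kappa^\infty C(s)\,ds=\infty$ by $\mathbf{H_4}$, and the divergence of $\sum_n\Psi(g_n)/\|g_n\|_\infty$ follows. Lemma \ref{lem:GAS2} then yields the global asymptotic stability of $\Phi$.
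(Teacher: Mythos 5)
Your proof is correct, and it follows a genuinely different route from the paper's. The paper constructs a probabilistic representation of $e^{s\Aa}$ with $s = t(1-\Theta)/N$: starting from $N, \Theta$ such that $K^N\1 \le \Theta < 1$ on $\MM$, it writes $e^{s\Aa}$ as an expectation over a random sum $\sum_{i=1}^{N_t}(NZ_i+U_i)$ of geometric and uniform variables driven by a Poisson process, splits the integrand into a $C$-part and a $K$-power part, and uses the FKG inequality to decouple the two monotone functionals before applying Jensen to the $C$-part. Your argument instead expands $g_n = e^{n\Aa}\1 = \sum_m\beta_m(n)K^m\1$ directly from $\Aa = (I-K)^{-1}$, and folds the ``$K$-power part'' into the definition of the probability measure $\nu_n$ on $\N$, so that a single application of Jensen to $\int C\,d\nu_n$ does the job with no FKG step. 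The bound $\int m\,d\nu_n\le n\|\Aa\|_\infty^2$, obtained via the resolvent identity $\sum_m m\beta_m(n)K^m = nK\Aa^2 e^{n\Aa}$ and $\|K\Aa^2\|_\infty\le\|\Aa\|_\infty^2$, replaces the paper's computation of $\E[\sum(NZ_i+U_i)]$ via Wald's identity. Your route is arguably tidier: it removes the need for the auxiliary constants $N,\Theta$ and the FKG step (whose monotonicity directions, incidentally, are stated backwards in the paper's proof, though harmlessly), at the cost of manipulating the generating function $e^{n/(1-z)}$ rather than an explicit random sum. Both ultimately rest on convexity of $C$ via Jensen and on $\|\Aa\|_\infty<\infty$.
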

\begin{proof}
By Lemma \ref{lem:expect} \textit{(i)} there exists $N \in \N^*$ and $\Theta < 1$ such that
$$K^{{N}}(x,\MM) \leq \Theta$$ for all $x \in \MM.$
Let $(Z_n)_{n\geq 1}$ be a sequence of i.i.d random variables on $\N$ having a geometric distribution,
$$\PE(Z_n = k) = \Theta^k(1-\Theta), \quad k \geq 0.$$ Let $(U_n)$ be a sequence of i.i.d random variables on $\{0,\ldots,N-1\}$ having a uniform distribution,
$$\PE(U_n = k) = \frac{1}{N}, \quad k = 0, \ldots, N-1,$$ and
let $(N_t)_{t \geq 0}$ be a standard Poisson process with parameter $1.$ We assume that  $(Z_n)_{n\geq 1}, (U_n)_{n\geq 1}, (N_t)_{t \geq 0}$ are mutually independent.

By independence we get  that
\begin{align*}
\E \left[ \frac{K^{\sum_{i = 1}^{N_t} (N Z_i + U_i)}}{\Theta^{\sum_{i = 1}^{N_t} Z_i}} \right] &= \sum_{n\geq 0} \frac{t^n}{n!} e^{-t} \E \left [ \frac{K^{N Z_1 + U_1}}{\Theta^{Z_1}} \right]^n\\
&=\sum_{n\geq 0} \frac{t^n}{n!} e^{-t} \left (\frac{(1-\Theta)}{N} \sum_{k \geq 0} \sum_{r = 0, \ldots, N-1} K^{ N k + r} \right)^n
&= e^{-t} e^{ t\frac{(1-\Theta)}{N} \Aa}.
\end{align*}

To shorten notation, set $s = t \frac{(1-\Theta)}{N}.$ Then,  for all $x \in \MM$
$$\Psi(g_{s}) = \Psi (e^{s \Aa} \1) =  e^t  \E \left[  \frac{ \Psi(K^{\sum_{i = 1}^{N_t} (N Z_i + U_i)} \1)}{\Theta^{\sum_{i = 1}^{N_t} Z_i}} \right]$$
$$\geq e^t \E \left[ C \left (\sum_{i = 1}^{N_t} (N Z_i + U_i)  \frac{ K^{\sum_{i = 1}^{N_t} (N Z_i + U_i)} \1(x)}{\Theta^{\sum_{i = 1}^{N_t} Z_i}} \right ) \right]$$
where the last inequality comes from hypothesis  $\mathbf{H_4}.$

For all $n \in \N, {\bf k} = (k_i) \in \N^{\N^*}$ and ${\bf r} = (r_i) \in \{0, \ldots, N-1\}^{\N^*}$
set
$$F(n, {\bf k}, {\bf r}) =  \frac{ K^{\sum_{i = 1}^{n} (N k_i + r_i)} \1(x)}{\Theta^{\sum_{i = 1}^{n} k_i}} $$
and
$$G(n, {\bf k}, {\bf r}) = C\left (\sum_{i = 1}^{n} (N k_i + r_i) \right)$$
and hence, the preceding inequality can be rewritten as,
$$\Psi(g_{s}) \geq  e^t \E \left[ G(N_t, Z, U) F(N_t, Z, U) \right].$$
Write $(n, {\bf k}, {\bf r}) \leq (n', {\bf k}', {\bf r}')$ when  $n \leq n', k_i \leq k'_i$ and $r_i \leq r'_i.$
The relations $\frac{K^N(x,\MM)}{\Theta} \leq 1$ and $K(x,\MM) \leq 1$  on one hand, and the monotonicity of $C$ on the other hand, imply that
$$F(n, {\bf k}, {\bf r} ) \leq F(n', {\bf k}', {\bf r}')$$
and
$$G(n, {\bf k}, {\bf r} ) \leq G(n', {\bf k}', {\bf r}')$$ whenever $(n, {\bf k}, {\bf r}) \leq (n', {\bf k}', {\bf r}')$
Then, by tensorisation of the classical FKG inequality, and Jensen inequality, we get that
\begin{align*}
\Psi(g_{s}) 	&\geq  e^t \E \left[ G(N_t, Z, U) \right] \E \left[ F(N_t, Z, U) \right]\\
&\geq e^t C\left( t \frac{ N}{1-\Theta} + \frac{N-1}{2}\right) \E \left [ F(N_t, Z, U) \right] = C\left( t \frac{ N}{1-\Theta} + \frac{N-1}{2}\right) g_s(x).
\end{align*}
That is
$$\Psi(g_{s}) \geq C\left(\frac{N^2}{(1-\Theta)^2} s + \frac{N-1}{2}\right) g_s(x),$$
so that the assumptions of Lemma \ref{lem:GAS2} are fulfilled.

\end{proof}

\section{Asymptotic pseudo-trajectory}
\label{sect:APT}
Our aim is now to prove that $(\mu_n)_{n\ge0}$, correctly normalized, is an asymptotic pseudo-trajectory of the flow $\Phi$ defined by (\ref{eq:defPhi}).

\subsection{Background} 
To prove that our procedure {has asymptotically the dynamics} of an ODE, we first need to embed it in a continuous-time process at an appropriate scale. Let us add some notation to explain this point. For $n\geq0$ and $t\geq 0$, set $\tau_n=\sum_{k=1}^n\gamma_k$ and $m(t)=\sup\{k\ge 0,  t\geq \tau_k\}$.  Let  $(\widehat{\mu}_t)_{t\ge0}$, $(\widebar{\mu}_t)_{t\ge0}$, $(\widebar{\epsilon}_t)_{t\ge0}$, $(\widebar{\gamma}_t)_{t\ge0}$ defined for all $n\geq 0$ and $s \in [0,\gamma_{n+1})$ by
$$
\widehat{\mu}_{\tau_n +s} = \left(1 - \frac{s}{\gamma_{n+1}}\right) \mu_{n} +  \frac{s}{\gamma_{n+1}} \mu_{n+1}, \ \widebar{\mu}_{\tau_n +s} =\mu_{n},
$$
$ \widebar{\epsilon}_{\tau_n +s}= \epsilon_n \ \text{ and } \widebar{\gamma}(\tau_n + s) = \gamma_n.$
With this notation, Equation \eqref{eq:algosto} can be written as follows:
$$\widehat{\mu}_t= \mu_0 + \int_0^t h(\widebar{\mu}_s) ds+\int_0^{t} \widebar{\epsilon}_{s} ds$$
with $h(\mu)=-\mu+\Pi_{\mu}$. The aim of this section is now to show that $\widehat{\mu}$ is a pseudo-trajectory of $\Phi$ defined in \eqref{eq:defPhi}. Let $d_\omega$ be a metric on $\mathcal{P}$ whose topology corresponds to the convergence in law {(as for instance the Fortet-Mourier distance defined in \eqref{eq:defFM})}. A continuous map $\zeta : \mathbb{R_+} \to \mathcal{P}$ is called an asymptotic pseudo-trajectory for $\Phi$ if
$$
\forall \, T>0,\quad \lim_{t \to \infty} \left( \sup_{0 \leq s \leq T} d_\omega(\zeta(t+s),\Phi(s,\zeta(t))) \right)=0.
$$
Note that this definition makes an explicit reference to $d_\omega$ but is in fact purely topological  (see \cite[Theorem 3.2]{B99}). In our setting, the asymptotic pseudo-trajectory property can be obtained by the following characterization:

\begin{theo}[Asymptotic pseudo-trajectories]
\label{equiv:prop3.5}
The following assertions are equivalent.
\begin{enumerate}
 \item The function $\widehat{\mu}$ is (almost surely) an asymptotic pseudo-trajectory for $\Phi$.
 \item For all continuous and bounded $f$ and $T>0$,
 \begin{equation}\label{eq:characAPT2}
 \lim_{t \to \infty} \sup_{0 \leq s \leq T } |\int_t^{t+s} \widebar{\epsilon}_{u} f du| =0 \quad\text{a.s.}
 \end{equation}
\end{enumerate}
\end{theo}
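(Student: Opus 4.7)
The plan is to work with the integral identity obtained by subtracting the defining equations of $\widehat{\mu}$ and $\Phi$: for any $f \in {\cal C}(\MM,\R)$,
\[
\widehat{\mu}_{t+s} f - \Phi_s(\widehat{\mu}_t) f = \int_0^s \bigl[h(\widebar{\mu}_{t+u}) - h(\Phi_u(\widehat{\mu}_t))\bigr] f\, du + \int_t^{t+s} \widebar{\epsilon}_u f\, du,
\]
with $h(\mu) = -\mu + \Pi_\mu$. Two preliminary observations will be used throughout. First, by Lemma~\ref{lemme1}(iii) the map $\mu \mapsto h(\mu) f = -\mu f + \Pi_\mu f$ is continuous on the weakly compact space ${\cal P}(\MM)$, hence uniformly continuous for any metric $d_\omega$ metrizing weak convergence. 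Second, from (\ref{eq:mupasapas}) one easily gets $\|\widebar{\mu}_u - \widehat{\mu}_u\|_{\textrm{TV}} \leq 2\widebar{\gamma}(u)$ which tends to $0$, so $\widebar{\mu}$ and $\widehat{\mu}$ become asymptotically indistinguishable even in total variation.

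For the direction (1) $\Rightarrow$ (2) I would rearrange the identity to isolate the noise integral. The APT property, combined with the fact that $d_\omega$-convergence implies convergence when tested against a fixed $f \in {\cal C}(\MM,\R)$, yields $\sup_{s \leq T} |\widehat{\mu}_{t+s}f - \Phi_s(\widehat{\mu}_t) f| \to 0$. For the drift term, splitting $d_\omega(\widebar{\mu}_{t+u}, \Phi_u(\widehat{\mu}_t)) \leq d_\omega(\widebar{\mu}_{t+u}, \widehat{\mu}_{t+u}) + d_\omega(\widehat{\mu}_{t+u}, \Phi_u(\widehat{\mu}_t))$, the first summand vanishes uniformly in $u \in [0,T]$ by the second preliminary observation and the second by APT; applying the uniform continuity of $\nu \mapsto h(\nu)f$ shows that $\int_0^s [h(\widebar{\mu}_{t+u}) - h(\Phi_u(\widehat{\mu}_t))]f \,du \to 0$ uniformly in $s$, which gives (2).

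For the converse (2) $\Rightarrow$ (1) I will argue by contradiction via compactness. If $\widehat{\mu}$ were not an APT, there would exist $\varepsilon > 0$ and sequences $t_n \uparrow \infty$, $s_n \in [0,T]$ with $d_\omega(\widehat{\mu}_{t_n+s_n}, \Phi_{s_n}(\widehat{\mu}_{t_n})) \geq \varepsilon$. Compactness of $[0,T]$ and ${\cal P}(\MM)$ allows one to assume $s_n \to s^*$ and $\widehat{\mu}_{t_n} \to \mu^*$. From $\mu_{n+1} - \mu_n = \gamma_{n+1}(\delta_{X_{n+1}} - \mu_n)$ together with the piecewise-linear definition of $\widehat{\mu}$, one checks that $\|\widehat{\mu}_{t+s} - \widehat{\mu}_{t+s'}\|_{\textrm{TV}} \leq 2|s-s'|$, so the curves $\eta_n \colon s \mapsto \widehat{\mu}_{t_n+s}$ are equicontinuous on $[0,T]$ (in TV, hence in $d_\omega$). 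Arzelà--Ascoli in ${\cal C}([0,T], {\cal P}(\MM))$ then provides a further subsequence along which $\eta_n \to \eta$ uniformly, with $\eta(0) = \mu^*$.

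One then passes to the limit, for each fixed $f$, in $\eta_n(s) f = \eta_n(0) f + \int_0^s h(\widebar{\mu}_{t_n+u}) f\, du + \int_0^s \widebar{\epsilon}_{t_n+u} f\, du$: condition (2) kills the noise term, while $\widebar{\mu}_{t_n+u} \to \eta(u)$ weakly (from the second observation and $\eta_n \to \eta$) combined with the continuity of $\nu \mapsto h(\nu) f$ and dominated convergence turns the drift integral into $\int_0^s h(\eta(u)) f\, du$. Hence $\eta$ is a weak solution of (\ref{eq:ODE}) starting at $\mu^*$, so by uniqueness $\eta(s) = \Phi_s(\mu^*)$ on $[0,T]$. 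Then $\widehat{\mu}_{t_n+s_n} = \eta_n(s_n) \to \eta(s^*) = \Phi_{s^*}(\mu^*)$ while continuity of $\Phi$ gives $\Phi_{s_n}(\widehat{\mu}_{t_n}) \to \Phi_{s^*}(\mu^*)$, contradicting $d_\omega(\widehat{\mu}_{t_n+s_n}, \Phi_{s_n}(\widehat{\mu}_{t_n})) \geq \varepsilon$. The main obstacle lies here: the natural Grönwall approach fails because $h$ is Lipschitz only in total variation (Proposition~\ref{lem:explicitpimu}(iii)), while (2) controls $\widebar{\epsilon}$ only in the weak sense and not in TV, so one must bypass Grönwall and use the qualitative Arzelà--Ascoli/uniqueness route above, which crucially leverages the weak continuity of $\mu \mapsto \Pi_\mu$ from Lemma~\ref{lemme1}(iii) rather than its TV Lipschitz property.
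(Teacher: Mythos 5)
Your proof is correct. The paper's ``proof'' of Theorem~\ref{equiv:prop3.5} is a one-line citation of \cite[Proposition~3.5]{BLR02}; what you have written is essentially the self-contained argument that citation hides, and it follows the standard Bena\"{\i}m-type methodology for establishing the asymptotic-pseudo-trajectory property. The direction $(1)\Rightarrow(2)$ is the easy rearrangement of the integral identity, using (a) that for fixed continuous $f$ on compact $\MM$, $d_\omega$-closeness gives closeness of integrals against $f$ (by Lipschitz approximation), and (b) the weak-weak uniform continuity of $\nu\mapsto h(\nu)f$ from Lemma~\ref{lemme1}(iii) plus $\|\widehat\mu_u-\widebar\mu_u\|_{\textrm{TV}}\le 2\widebar\gamma(u)\to 0$. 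The direction $(2)\Rightarrow(1)$ is the substantive one, and your contradiction argument is sound: the TV-Lipschitz estimate $\|\widehat\mu_{t+s}-\widehat\mu_{t+s'}\|_{\textrm{TV}}\le 2|s-s'|$ (from $\mu_{n+1}-\mu_n=\gamma_{n+1}(\delta_{X_{n+1}}-\mu_n)$) gives equicontinuity, Arzel\`a--Ascoli on $[0,T]\to{\cal P}(\MM)$ extracts a limit curve $\eta$, condition~(2) eliminates the noise term, weak continuity of $h(\cdot)f$ identifies $\eta$ as a weak solution of \eqref{eq:ODE}, and uniqueness of weak solutions (established in the paper's flow proposition) forces $\eta=\Phi_\cdot(\mu^*)$, yielding the contradiction. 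Your closing remark is a genuinely useful observation: one might be tempted to run a Gr\"onwall argument directly, but since Proposition~\ref{lem:explicitpimu}(iii) gives Lipschitz continuity of $\mu\mapsto\Pi_\mu$ only for the total-variation norm while~\eqref{eq:characAPT2} controls the noise only in the weak sense, Gr\"onwall does not close, and the qualitative compactness-plus-uniqueness route is the right one; this is precisely what \cite{BLR02} does. The only point you gloss over is the passage from ``a.s.\ for each $f,T$'' in~(2) to a single full-measure event on which the deterministic argument runs for all $f,T$; this is handled by separability of ${\cal C}(\MM,\R)$ and restricting to rational $T$, and is harmless.
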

\begin{proof}
This is a consequence of \cite[Proposition 3.5]{BLR02}.
\end{proof}
The previous theorem is one of the main differences with the previous article \cite{BC15}. Indeed, in finite state space, the topology of the total variation distance is not stronger than the weak topology.

As in \cite{BC15} and older works on reinforced random walks (see references therein), we now need some properties of solutions of Poisson equations to prove that \eqref{eq:characAPT2} holds. However, in contrast with the finite-space setting of \cite{BC15}, the associated bounds are intricate.

\subsection{Poisson Equation related to ${K_\mu}$}
For a fixed $\mu$ and a given function $f:{\MM}\rightarrow\ER$, let us consider the Poisson equation
\begin{equation}\label{Poissonequa}
 f-\Pi_\mu f= (I-{K_\mu}) g.
 \end{equation}
The existence of a solution $g=Q_\mu f$ to this equation and the smoothness of $\mu \mapsto Q_\mu f$ play an important role for the study of our algorithm.
\tcr{These properties are stated in Lemma \ref{prop:poisequa}. Before, we need to establish the following technical lemma:}
\begin{lem}[Lipschitz property of $\mu \to K^j_\mu$ ]
\label{lem:lip-prel}
For every $\mu,\nu \in \mathcal{P}(\MM)$ and $j\in \mathbb{N}$, we have

\begin{equation}\label{eq:lipkmu}
\sup_{\alpha \in {\cal P}(\MM)} \Vert \alpha K^j_\mu - \alpha K^j_\nu \Vert_{\textrm{TV}} \leq  2^j \Vert \mu - \nu \Vert_{\textrm{TV}},
\end{equation}
and for every bounded function $f$  then

$$
\sup_{x\in \MM} \Vert K^j_\mu(f) - K^j_\nu(f) \Vert_{\infty} \leq  2^j \Vert f \Vert_\infty  \Vert \mu - \nu \Vert_{\textrm{TV}}.
$$
\end{lem}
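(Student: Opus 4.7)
I would prove the second (sup-norm) inequality by induction on $j$ and then deduce the first inequality by a simple duality argument. For the base case $j=1$, the identity $K_\mu f(x)-K_\nu f(x)=\delta(x)(\mu-\nu)(f)$ gives immediately
$\|K_\mu f-K_\nu f\|_\infty\le\|\delta\|_\infty\,\|\mu-\nu\|_{\textrm{TV}}\,\|f\|_\infty\le\|\mu-\nu\|_{\textrm{TV}}\,\|f\|_\infty,$
which is bounded by $2\|\mu-\nu\|_{\textrm{TV}}\|f\|_\infty$ as required.

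For the inductive step, I would use the telescoping decomposition
$$K_\mu^{j+1}f-K_\nu^{j+1}f=K_\mu\bigl(K_\mu^{j}f-K_\nu^{j}f\bigr)+(K_\mu-K_\nu)K_\nu^{j}f.$$
Since $K_\mu$ is a Markov kernel (it contracts the sup norm, $\|K_\mu h\|_\infty\le\|h\|_\infty$), the first term is bounded by $\|K_\mu^j f-K_\nu^j f\|_\infty$, to which the inductive hypothesis applies. For the second term, the base case applied to the function $K_\nu^j f$ yields
$\|(K_\mu-K_\nu)K_\nu^j f\|_\infty\le\|\mu-\nu\|_{\textrm{TV}}\,\|K_\nu^j f\|_\infty\le\|\mu-\nu\|_{\textrm{TV}}\,\|f\|_\infty,$
using again the fact that $K_\nu$ is Markov. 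Combining, I obtain the recursion
$\|K_\mu^{j+1}f-K_\nu^{j+1}f\|_\infty\le 2^{j}\|\mu-\nu\|_{\textrm{TV}}\|f\|_\infty+\|\mu-\nu\|_{\textrm{TV}}\|f\|_\infty\le 2^{j+1}\|\mu-\nu\|_{\textrm{TV}}\|f\|_\infty,$
which closes the induction.

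For the first inequality, I would use the dual characterization of the total variation norm:
$$\|\alpha K_\mu^j-\alpha K_\nu^j\|_{\textrm{TV}}=\sup_{\|f\|_\infty\le 1}\bigl|\alpha(K_\mu^j f-K_\nu^j f)\bigr|\le\sup_{\|f\|_\infty\le 1}\|K_\mu^j f-K_\nu^j f\|_\infty\le 2^j\|\mu-\nu\|_{\textrm{TV}},$$
with the final bound uniform in $\alpha\in{\cal P}(\MM)$.

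There is no real technical obstacle here: the only point worth care is to exploit that $K_\mu$ and $K_\nu$ are \emph{Markov} (not merely sub-Markov) kernels so that one has the non-expansivity $\|K_\mu h\|_\infty\le\|h\|_\infty$; this is essential to keep the factor $2$ rather than something growing in the ambient data. A tighter induction (writing $K_\mu^{j+1}-K_\nu^{j+1}=K_\mu(K_\mu^j-K_\nu^j)+(K_\mu-K_\nu)K_\nu^j$ and contracting by $1$ at each step) would in fact yield the sharper bound $j\|\mu-\nu\|_{\textrm{TV}}\|f\|_\infty$, but the stated $2^j$ suffices for the subsequent application to the Poisson equation.
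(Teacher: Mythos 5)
Your proof is correct. The overall strategy is the same as the paper's (induction on $j$, treating the base case $j=1$ directly from $K_\mu - K_\nu = \delta\,(\mu-\nu)$), but the telescoping you choose is different and in fact a bit cleaner. You split $K_\mu^{j+1}-K_\nu^{j+1}=K_\mu\bigl(K_\mu^{j}-K_\nu^{j}\bigr)+(K_\mu-K_\nu)K_\nu^{j}$, which gives the recursion $a_{j+1}\le a_j + \|\mu-\nu\|_{\textrm{TV}}\|f\|_\infty$ (hence the linear bound $j\|\mu-\nu\|_{\textrm{TV}}\|f\|_\infty$ that you note). The paper instead proves the measure-side estimate first, setting $\kappa_j(\mu,\nu)=\sup_\alpha\|\alpha K_\mu^j-\alpha K_\nu^j\|_{\textrm{TV}}$, and expands the leftmost factor via $K_\mu=K+\delta\,\mu$; this produces \emph{three} terms and the looser recursion $\kappa_{j+1}\le 2\kappa_j+\|\mu-\nu\|_{\textrm{TV}}$, which is where the stated $2^j$ comes from. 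Your choice of decomposition avoids the doubling because the ``inner'' factor $K_\mu^j-K_\nu^j$ appears only once; both arguments are valid and both correctly use the non-expansivity of the Markov kernels $K_\mu,K_\nu$ on $\|\cdot\|_\infty$ (equivalently on $\|\cdot\|_{\textrm{TV}}$). The direction of the final duality step is also reversed (you go from sup-norm to TV, the paper goes from TV to sup-norm), but these are equivalent by \eqref{def:vartot}.
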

\begin{proof}
By the definition of the total variation, the second part follows from the first one. We thus only focus on the first statement. For every $j\in \mathbb{N}$, one sets
$$
 \kappa_j(\mu,\nu) = \sup_{\alpha \in P} \Vert \alpha K_\mu^{j} - \alpha K_\nu^{j} \Vert_{\textrm{TV}} .
$$
We have $\kappa_0(\mu,\nu)=0$ and since $K_\mu(.) = K(.)+\delta(.) \mu$ and $\alpha(\delta)\le 1$,
$$
\kappa_1(\mu,\nu) =\sup_{\alpha \in P} \Vert \alpha K_\mu - \alpha K_\nu\Vert_{\textrm{TV}}  = \Vert \alpha(\delta) (\mu- \nu) \Vert_{\textrm{TV}} \leq  \Vert \mu- \nu \Vert_{\textrm{TV}}.
$$
Furthermore, for every $j\ge0$,
\begin{align*}
\Vert \alpha(K_\mu)^{j+1} - \alpha(K_\nu)^{j+1}\Vert_{\textrm{TV}}
&= \Vert \alpha(K+\delta \mu) (K_\mu)^j - \alpha(K+\delta \nu) (K_\nu)^j\Vert_{\textrm{TV}}\\
&= \Vert \alpha K (K_\mu^{j} - K_\nu^{j})   + \alpha(\delta) \mu (K_\mu)^j- \alpha(\delta) \nu (K_\nu)^j\Vert_{\textrm{TV}}\\
&\leq \Vert \alpha K K_\mu^{j} -  \alpha K K_\nu^{j} \Vert_{\textrm{TV}}  + \Vert \alpha(\delta)\mu (K_\mu)^j - \alpha(\delta) \nu (K_\mu)^j\Vert_{\textrm{TV}}\\
&\quad \  + \Vert \alpha(\delta) \nu (K_\mu)^j - \alpha(\delta) \nu (K_\nu)^j\Vert_{\textrm{TV}}\\
&\leq \kappa_j(\mu,\nu) + \Vert \mu - \nu \Vert_{\textrm{TV}} + \kappa_j(\mu,\nu)= 2 \kappa_j(\mu,\nu)+ \Vert \mu - \nu \Vert_{\textrm{TV}}.
\end{align*}
Note that for the last inequality, we again used that $\alpha(\delta)\le 1$ and that for every probabilities $\alpha$, $\beta$ and every transition kernel $P$,
$\|\alpha P-\beta P\|_{\textrm{TV}}\le \| \alpha-\beta\|_{\textrm{TV}}$.
An induction of the previous inequality then leads to:
$$
\forall j\in\mathbb{N},\quad \kappa_j(\mu,\nu) \leq 2^j \Vert \mu - \nu \Vert_{\textrm{TV}}
$$
This yields \eqref{eq:lipkmu}.
\end{proof}

\begin{lem}[Poisson equation]\label{prop:poisequa}
Assume Hypothesis \ref{hypoK}. Let $\mu\in{\cal P}({\MM})$. Let $(P_t^\mu)_{t\ge0}$ be defined by  \eqref{poissonization}. Then, for any measurable function $f:{\MM}\rightarrow\ER$, the Poisson equation \eqref{Poissonequa} admits a solution denoted by $Q_\mu f$ and
defined by
\begin{equation}
\label{eq:def-Q}
 Q_\mu f(x)=\int_0^{+\infty} (P_t^\mu f(x) - \Pi_\mu(f))dt, 
\end{equation}
Furthermore,
\begin{enumerate}[label=(\roman*)]
\item for every $\mu\in {\cal P}(\MM)$, \ $\Vert Q_\mu f \Vert_\infty \leq C \Vert f \Vert_\infty$.
\item for every $\mu,\alpha \in {\cal P}(\MM)$, \ $\vert \alpha Q_\mu f \vert \leq C \Vert f \Vert_\infty \Vert \alpha - \Pi_\mu \Vert_{\textrm{TV}}$.
\item for every $\mu \in{\cal P}(\MM)$ and measurable $f:\MM\rightarrow\ER$, $\Vert Q_\mu f - Q_\nu f \Vert_\infty \leq C_2 \Vert f \Vert_\infty  \Vert \mu- \nu \Vert_{\textrm{TV}}$.
\item for every $\mu,\alpha \in {\cal P}(\MM)$,  $\Vert \alpha Q_\mu  - \alpha Q_\nu  \Vert_{\textrm{TV}} \leq C_2 \Vert \mu- \nu \Vert_{\textrm{TV}}$.
\end{enumerate}
\end{lem}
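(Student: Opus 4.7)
The plan is to proceed in four steps matching the structure of the lemma. First I would establish integrability of \eqref{eq:def-Q} and deduce (i) and (ii); second, verify that $Q_\mu f$ solves the Poisson equation; third, derive a clean algebraic identity for $Q_\mu f - Q_\nu f$ that yields (iii); and fourth, deduce (iv) directly.

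For well-definedness and (i)--(ii), I would apply Lemma \ref{lem3} with $\alpha = \delta_x$ to obtain
$|P_t^\mu f(x) - \Pi_\mu f| \leq \|f\|_\infty \|\delta_x P_t^\mu - \Pi_\mu\|_{\textrm{TV}} \leq 2(1-\varepsilon)^{\lfloor t \rfloor}\|f\|_\infty$,
which is integrable in $t$, giving both convergence of \eqref{eq:def-Q} and (i) with $C = 2/\varepsilon$. The same estimate for a general $\alpha$ (no supremum over $x$) and subsequent integration yields (ii).

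For the Poisson equation itself, I would exploit that $(P_t^\mu)_{t\geq 0}$ is the Poisson-subordinated semigroup with bounded generator $K_\mu - I$, so $\frac{d}{dt} P_t^\mu f = (K_\mu - I) P_t^\mu f$. The exponential-decay bound justifies exchanging the bounded operator $I - K_\mu$ with the integral, giving
\[
(I - K_\mu) Q_\mu f = -\int_0^\infty \tfrac{d}{dt} P_t^\mu f\, dt = f - \lim_{T\to\infty} P_T^\mu f = f - \Pi_\mu f.
\]
The same computation, combined with $\Pi_\mu((I-K_\mu)g) = 0$ (invariance of $\Pi_\mu$), yields the companion identity $Q_\mu(I - K_\mu) g = g - \Pi_\mu g$ for every bounded measurable $g$, which will be the key algebraic lever below.

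For (iii), subtracting the Poisson equations for $\mu$ and $\nu$ gives $(I - K_\mu)(Q_\mu - Q_\nu) f = (K_\mu - K_\nu) Q_\nu f + (\Pi_\nu - \Pi_\mu) f$. Applying $Q_\mu$ and using the companion identity above together with $\Pi_\mu Q_\mu f = 0$ and the fact that $Q_\mu$ annihilates constants, I obtain
\[
(Q_\mu - Q_\nu) f = Q_\mu\bigl[(K_\mu - K_\nu) Q_\nu f\bigr] - \Pi_\mu(Q_\nu f).
\]
The factorization $(K_\mu - K_\nu) h = \delta \cdot (\mu - \nu)(h)$ combined with (i) gives $\|(K_\mu - K_\nu) Q_\nu f\|_\infty \leq C\|f\|_\infty \|\mu - \nu\|_{\textrm{TV}}$, and a second application of (i) bounds $Q_\mu$ of this function by $C^2 \|f\|_\infty \|\mu - \nu\|_{\textrm{TV}}$. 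For the scalar term, $\Pi_\nu Q_\nu f = 0$ allows the rewriting $\Pi_\mu(Q_\nu f) = (\Pi_\mu - \Pi_\nu)(Q_\nu f)$, which is controlled by $2\|\Aa\|_\infty \cdot C\|f\|_\infty \|\mu - \nu\|_{\textrm{TV}}$ via Proposition \ref{lem:explicitpimu}(iii). Adding the two bounds yields (iii), and (iv) follows by taking the supremum over $\|f\|_\infty \leq 1$ in the definition of total variation.

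The main obstacle is spotting this algebraic identity: a naive route relying on Lemma \ref{lem:lip-prel} to control $\|P_t^\mu f - P_t^\nu f\|_\infty$ and then splitting $\int_0^\infty$ at a balancing time only produces a bound of order $\|\mu - \nu\|_{\textrm{TV}} \cdot |\log \|\mu - \nu\|_{\textrm{TV}}|$. Working inside the Poisson equation rather than with the semigroup representation bypasses this logarithmic loss.
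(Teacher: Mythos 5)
Your argument is correct, and for parts (iii)--(iv) it takes a genuinely different and in fact cleaner route than the paper. The paper controls $Q_\mu f - Q_\nu f$ by decomposing the Poissonized semigroup through the Doeblin minorization $P_\mu = \varepsilon\mu + (1-\varepsilon)R_\mu$, then proving (via Lemma \ref{lem:lip-prel} and an induction on $\Xi_n(\mu,\nu)=\sup_\alpha\|\alpha R_\mu^n - \alpha R_\nu^n\|_{\textrm{TV}}$) that the Lipschitz constant of $R_\mu^n$ in $\mu$ grows only linearly in $n$; the geometric prefactor $(1-\varepsilon)^n$ then absorbs this linear growth. You instead exploit the algebraic structure of the Poisson equation itself: subtracting the two Poisson equations gives $(I-K_\mu)(Q_\mu - Q_\nu)f = (K_\mu-K_\nu)Q_\nu f + (\Pi_\nu-\Pi_\mu)f$, and applying $Q_\mu$ as a pseudo-inverse of $(I-K_\mu)$ (via the companion identity $Q_\mu(I-K_\mu)g = g - \Pi_\mu g$, together with $\Pi_\mu Q_\mu f=0$ and $Q_\mu\1=0$, all of which I have checked) yields the closed identity $(Q_\mu-Q_\nu)f = Q_\mu[(K_\mu-K_\nu)Q_\nu f] - (\Pi_\mu-\Pi_\nu)(Q_\nu f)$. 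Combined with the factorization $(K_\mu-K_\nu)h = \delta\cdot(\mu-\nu)(h)$, part (i), and Proposition \ref{lem:explicitpimu}(iii), this gives the bound with no extra kernel estimates. Your route avoids Lemma \ref{lem:lip-prel} entirely (you only use its trivial $j=1$ content through the factorization of $K_\mu-K_\nu$), avoids tracking the $\mu$-dependence of $R_\mu^n$, and makes the Lipschitz constant $C_2 \leq C^2 + 2C\|\Aa\|_\infty$ explicit in terms of quantities already in hand; the paper's route, while more computational, develops quantitative information about the iterated residual kernels $R_\mu^n$ that could be of independent use but is not needed here. Your remark about the naive balancing of the integral is a fair motivation for why some structure is needed, though I would note the paper does not take that naive route either; both proofs dodge the logarithmic loss, just by different mechanisms.
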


{Note that our work is closely related to \cite{BLR02} which also investigates the pseudo-trajectory property of a measure-valued sequence. {Nevertheless, the scheme of the proof for the smoothness of the Poisson solutions is significantly different. Indeed, in contrast with \cite[Lemma 5.1]{BLR02}, which is proved using classical functional results (such as the Bakry-Emery criterion), the above lemma (especially $(iii)$ and $(iv)$) is obtained using a refinement of the ergodicity result provided by Lemma \ref{lem3}.}}



\begin{proof}
First, by Lemma \ref{lem3}, the integral in \eqref{eq:def-Q} is well defined. Then, coming back to the definition of $(P_t^\mu)_{t\ge0}$ (see \eqref{poissonization}),
one can readily check that  $(P_t^\mu)_{t\ge0}$ has infinitesimal generator ${\cal L}_\mu$ defined on continuous functions $f:\MM\rightarrow\ER$ by
${\cal L}_\mu f= ({K_\mu}-I) f$.  Without loss of generality, one can assume that $\Pi_\mu(f)=0$. Then, by the Dynkin formula and the commutation and linearity properties, it follows that
$$\forall x\in \MM,\;\forall t\ge0,\quad P_t^\mu f(x)= f(x)+ {\cal L}_\mu \int_0^t  P_s^\mu f(x) ds.$$
Letting  $t$ go to $\infty$ and using again Lemma \ref{lem3} (to ensure the convergence of the right and left hand sides), we deduce that it is a solution to the Poisson equation.\smallskip

Statements  $(i)$ and $(ii)$ are also straightforward consequences of Lemma \ref{lem3}. Thus, in the sequel of  the proof, we only focus on the "Lipschitz" properties $(iii)$ and $(iv)$.\smallskip

\noindent {Without loss of generality, we assume in the sequel that $\|f\|_\infty\le 1$}. By \eqref{eq:K=psi+R}, for every $t\geq 0$
\begin{equation}\label{eq:rmu}
(\alpha - \beta) P_t^\mu = (1-\varepsilon)^{\lfloor t\rfloor} (\alpha P_{t-\lfloor t\rfloor}^\mu - \beta P_{t-\lfloor t\rfloor}^\mu )
 R_\mu^{\lfloor t\rfloor}
\end{equation}
where, with the notation of Lemma \ref{lem3}, $R_\mu$ is given by
\begin{align}
\label{eq:Rmu}
R_\mu
&=\frac{1}{(1-\varepsilon)} \left( e^{-1} \sum_{j\geq 0} \frac{1}{j!} K^{j}_\mu - \varepsilon \mu \right)
\end{align}
The kernels $K_\mu^j$ are Lipschitz continuous with respect to the total variation norm, uniformly in $\alpha \in \mathcal{P}(\MM)$, as it can be checked in Lemma \ref{lem:lip-prel} \tcr{above}. Set
$$
\Xi_n(\mu,\nu) = \sup_{\alpha\in{\cal P}(\MM)} \Vert \alpha R_\mu^{n} -  \alpha R_\nu^n  \Vert_{\textrm{TV}}.
$$
From \eqref{eq:Rmu} and \eqref{eq:lipkmu}, we have

$$
\Xi_1(\mu,\nu) \leq \frac{e}{(1-\varepsilon)} \Vert \mu - \nu \Vert_{\textrm{TV}}.
$$
Now,
\begin{align*}
\Xi_{n+1}(\mu,\nu)
&\leq \sup_{\alpha\in{\cal P}(\MM)} \Vert (\alpha R_\mu^{n}) R_\mu  -  (\alpha R_\mu^n) R_\nu  \Vert_{\textrm{TV}}  +\sup_{\alpha\in{\cal P}(\MM)} \Vert (\alpha R_\mu^{n}) R_\nu - (\alpha  R_\nu^n) R_\nu  \Vert_{\textrm{TV}}\\
&\leq \Xi_{1}(\mu,\nu) + \Xi_{n}(\mu,\nu),
\end{align*}
where for the second term, we used that for some laws $\alpha$ and $\beta$ and for a transition kernel $P$, $\|\alpha P-\beta P\|_{\textrm{TV}}\le \|\alpha-\beta\|_{\textrm{TV}}$.
By induction, it follows that
$$
\Xi_n(\mu,\nu)  \leq \frac{n e}{(1-\varepsilon)} \Vert \mu - \nu \Vert_{\textrm{TV}}.
$$
As a consequence, there exists a constant $C$ such that
$$
\Vert R_\mu^{n}f -  R_\nu^{n}f \Vert_{\infty}\leq C n \Vert \mu- \nu \Vert_{\textrm{TV}}.
$$
and for every $\alpha\in{\cal P}(\MM)$,
\begin{equation}\label{eq:contRmu2}
| \alpha (R_\mu)^{n}f -  \alpha (R_\nu)^n f | \leq C n \Vert \mu- \nu \Vert_{\textrm{TV}}.
\end{equation}
Let us now prove that $\mu \to Q_\mu f(x)$ is Lipschitz continuous. From the definition of $Q_\mu$ and from \eqref{eq:rmu}, we have
\begin{align*}
 Q_\mu f(x) - Q_\nu f(x)
&=\sum_{n=0}^{+\infty} (1-\varepsilon)^{n}{\int_{0}^{1}} \left( (\delta_x - \Pi_\mu) P_r^\mu R^n_\mu  f
  - (\delta_x - \Pi_\nu) P_r^\nu R^n_\nu  f  \right) dr.
\end{align*}
Now, for every $n\ge0$ and $r\in[0,1)$,
{
\begin{align*}
\Big| (\delta_x - \Pi_\mu) P_r^\mu R^n_\mu f &- (\delta_x - \Pi_\nu) P_r^\nu R^n_\nu f  \Big|\\
&\le    |\delta_x (P_r^\mu-P_r^\nu) R^n_\mu  f |+|(\Pi_\mu P_r^\mu- \Pi_\nu P_r^\nu) R^n_\nu f |\\
&+|\delta_x P_r^\nu (R^n_\mu  f-R^n_\nu  f)|+|\Pi_\nu  P_r^\nu (R^n_\mu  f-R^n_\nu  f)|
\end{align*}
The two last terms can be controlled by \eqref{eq:contRmu2} with $\alpha= \delta_x P_r^\nu$ and $\alpha=\Pi_\nu P_r^\nu = \Pi_\nu$ respectively.
For the second one, one can deduce a bound from   Proposition \ref{lem:explicitpimu} $(iii)$ and the fact that $\displaystyle{\sup_{\| g \|_\infty \leq 1} \| R^n_\mu g \|_\infty \leq 1}$. Finally for the first one, using Lemma \ref{lem:lip-prel}  and \eqref{poissonization}, we have
\begin{align*}
| \delta_x (P_r^\mu-P_r^\nu) R^n_\mu  f |
&\leq e^{-r} \sum_{j\geq 0} \frac{r^j}{j!} \Vert \delta_x K_\mu^j - \delta_x K_\nu^j \Vert_{\textrm{TV}}
\leq e^r \Vert \mu- \nu \Vert_{\textrm{TV}}
\end{align*}
.  }
 One deduces that, for some constants $C_1,C_2>0$,
 $$ |Q_\mu f(x) - Q_\nu f(x)|\le  \sum_{n=0}^{+\infty} (1- \varepsilon)^n \ [2 C n\Vert \mu- \nu \Vert_{\textrm{TV}}  + C_1 \Vert \mu- \nu \Vert_{\textrm{TV}}]\leq C_2 \Vert \mu- \nu \Vert_{\textrm{TV}}.$$
Since the Lipschitz constant $C_2$ does not depend on $x$, the statements  $(iii)$ and $(iv)$ easily follow.


\end{proof}

\subsection{Asymptotic Pseudo-trajectories}

\begin{theo}\label{prop:asymptraj} Under Hypotheses \ref{hypogain}  and \ref{hypoK}  $(\widehat{\mu}_t)_{t\ge0}$ is an asymptotic pseudo-trajectory of  $\Phi$ as defined by (\ref{eq:defPhi}).
\end{theo}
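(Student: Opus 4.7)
The plan is to invoke Theorem~\ref{equiv:prop3.5}, which reduces the statement to showing that, for every $f \in {\cal C}(\MM, \R)$ and $T > 0$,
\[
\lim_{t \to \infty} \sup_{0 \leq s \leq T} \Big| \int_t^{t+s} \widebar{\epsilon}_u f \, du \Big| = 0 \quad \text{almost surely.}
\]
Because $\widebar{\epsilon}_u = \epsilon_n$ on $[\tau_n, \tau_{n+1})$, this integral equals $\sum_{n = m(t)+1}^{m(t+s)-1} \gamma_{n+1} \epsilon_n f$ up to two boundary terms of size $O(\gamma_{m(t)+1} \|f\|_\infty)$, which vanish as $t \to \infty$. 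Thus the core task is to control $\sup_{0 \leq s \leq T} |S_{m(t+s)} - S_{m(t)}|$, where $S_N := \sum_{n=0}^{N-1} \gamma_{n+1} \epsilon_n f$.

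The central tool is the Poisson equation from Lemma~\ref{prop:poisequa}: setting $g_n := Q_{\mu_n} f$, one has $f - \Pi_{\mu_n}(f) = g_n - K_{\mu_n} g_n$, whence
\[
\epsilon_n f \;=\; g_n(X_{n+1}) - K_{\mu_n} g_n(X_{n+1}) \;=\; \Delta M_{n+1} + r_{n+1},
\]
where $\Delta M_{n+1} := g_n(X_{n+1}) - K_{\mu_n} g_n(X_n)$ is a martingale increment w.r.t.\ $({\cal F}_n)$, since $g_n$ is ${\cal F}_n$-measurable and $\E[g_n(X_{n+1}) \mid {\cal F}_n] = K_{\mu_n} g_n(X_n)$, and $r_{n+1} := h_n(X_n) - h_n(X_{n+1})$ with $h_n := K_{\mu_n} g_n$.

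For the remainder, an Abel transformation on $\sum_n \gamma_{n+1} [h_n(X_n) - h_n(X_{n+1})]$ yields two boundary contributions of order $\|h\|_\infty \gamma_{m(t)+1}$ plus a sum whose general term is $(\gamma_{n+1} - \gamma_n) h_n(X_n) + \gamma_n (h_n(X_n) - h_{n-1}(X_n))$. Monotonicity of $(\gamma_n)$ makes the first piece telescopic with total variation $\gamma_{m(t)+1}$; for the second, Lemma~\ref{prop:poisequa}(iii) combined with $\|K_\mu g - K_\nu g\|_\infty \leq \|g\|_\infty \|\mu - \nu\|_{\textrm{TV}}$ and the elementary bound $\|\mu_n - \mu_{n-1}\|_{\textrm{TV}} \leq 2\gamma_n$ (read off \eqref{eq:mupasapas}) gives $\|h_n - h_{n-1}\|_\infty \leq C\|f\|_\infty \gamma_n$, so that
\[
\sum_{n=m(t)+1}^{m(t+T)} \gamma_n \|h_n - h_{n-1}\|_\infty \;\leq\; C\|f\|_\infty \gamma_{m(t)+1} \sum_{n=m(t)+1}^{m(t+T)} \gamma_n \;\leq\; C\|f\|_\infty\, T\, \gamma_{m(t)+1} \xrightarrow[t\to\infty]{} 0.
\]

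The delicate step is the martingale $M_N := \sum_{n=0}^{N-1} \gamma_{n+1} \Delta M_{n+1}$. Since $|\Delta M_{n+1}| \leq 2 C\|f\|_\infty$ by Lemma~\ref{prop:poisequa}(i), the maximal Azuma--Hoeffding inequality gives, for some constant $c>0$,
\[
\PE\Big(\sup_{0 \leq s \leq T} |M_{m(t+s)} - M_{m(t)}| > \epsilon\Big) \;\leq\; 2 \exp\!\Big(-\frac{c\, \epsilon^2}{T\, \gamma_{m(t)+1}}\Big),
\]
using $\sum_{n=m(t)}^{m(t+T)} \gamma_{n+1}^2 \leq \gamma_{m(t)+1} \cdot T$. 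Evaluated at integer times $t = k \in \N$, Hypothesis~\ref{hypogain} ($\gamma_n \ln n \to 0$) yields $1/\gamma_{m(k)+1} \geq A \ln m(k) \geq A \ln k$ for any fixed $A$ and all $k$ large; choosing $A > T/(c\epsilon^2)$ makes $\sum_k k^{-A c \epsilon^2/T}$ finite, and Borel--Cantelli delivers the a.s.\ convergence to $0$ along integers, which extends to $t \in \R^+$ by sandwiching with adjacent windows. The main obstacle is precisely this martingale estimate: the hypothesis $\gamma_n \ln n \to 0$ is much weaker than the classical $\sum \gamma_n^2 < \infty$, so classical $L^2$-martingale convergence is unavailable, and exploiting the exponential concentration of bounded weighted increments coupled with Borel--Cantelli along a discrete grid is what makes the logarithmic step-size condition the sharp threshold for the pseudo-trajectory property.
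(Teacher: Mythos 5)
Your proof is correct and takes essentially the same route as the paper: the identical Poisson-equation decomposition of $\epsilon_n f$ into a bounded martingale increment and a remainder (you carry out the Abel/summation-by-parts step explicitly, whereas the paper packages it as $\Delta R_{n+1}+\gamma_{n+1}\Delta D_{n+1}$, but the resulting telescoping term, $(\gamma_{n+1}-\gamma_n)$ term and $\|h_n-h_{n-1}\|_\infty\le C\gamma_n$ estimate are the same), and the identical exponential-martingale concentration plus Borel--Cantelli argument along integer times driven by $\gamma_n\ln n\to 0$, which the paper delegates to \cite[Proposition 4.4]{B99}.
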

\begin{Rq} {\rm
Since the  limit of precompact asymptotic pseudo trajectories is internally chain transitive (see \cite{BH96}, \cite{B99}), this theorem implies Theorem \ref{th:limset}, hence Corollary \ref{cor:limset}.}
  \end{Rq}
\begin{proof}

\noindent By Theorem \ref{equiv:prop3.5}, it is enough to show that for any bounded continuous function $f$, for any $T>0$,
$$\limsup_{t\rightarrow+\infty} \sup_{s\in[0,T]}\left|\int_t^{t+s} \widebar{\epsilon}_{s}(f) ds\right|=0$$
which in turn is equivalent to, for every $m\geq 0$
 $$\limsup_{n\rightarrow+\infty} \max_{j \leq m}\left|\sum_{k=n}^{n+j}\varepsilon_k(f)\right| =0.$$
Here,
$$\varepsilon_{n}(f)= \tcrg{\gamma_{n+1}}\left( f(X_{n+1})-\Pi_{\mu_n} (f)\right)=\tcrg{\gamma_{n+1}}\left(Q_{\mu_n} f(X_{n+1})-K_{\mu_n} Q_{\mu_n} f(X_{n+1})\right).$$
We decompose this term as follows:
\begin{equation}
\varepsilon_{n}(f)=\gamma_{n+1}\Delta M_{n+1}(f)+\Delta R_{n+1}(f)+\gamma_{n+1}\Delta D_{n+1}(f)
\end{equation}
with
\begin{align*}
&\Delta M_{n+1}(f)= Q_{\mu_n} f(X_{n+1})-K_{\mu_n}Q_{\mu_n} f(X_{n})\\
&\Delta R_{n+1}(f)=(\gamma_{n+1}-\gamma_n)K_{\mu_n}Q_{\mu_n} f(X_{n})+ \left(\gamma_n K_{\mu_n}Q_{\mu_n} f(X_{n})-\gamma_{n+1} K_{\mu_{n+1}}Q_{\mu_{n+1}} f(X_{n+1})\right)\\
&\Delta D_{n+1}(f)= \left(K_{\mu_{n+1}}Q_{\mu_{n+1}} f(X_{n+1})-K_{\mu_n} Q_{\mu_n} f(X_{n+1})\right).
\end{align*}
First, let us focus on $\Delta R_{n+1}$.  Using for the first part that $(\gamma_n)_{n\ge0}$ is decreasing and that $(x,\mu)\mapsto K_\mu Q_\mu f(x)$ is (uniformly) bounded (Lemma \ref{prop:poisequa} \textit{(i)}), and a telescoping argument for the second part yields for any positive integer $m$:
 $$\left|\sum_{k=n}^{m} \Delta R_{k}(f)\right|\le C\gamma_n.$$
 Second $(\Delta M_k)$ is a sequence of $({\cal F}_n)$-martingale increments. 
From Lemma \ref{prop:poisequa}, $\Delta M_n(f)$ is bounded (and thus subgaussian). As a consequence, using that $\lim_{n\rightarrow+\infty}\gamma_n\log(n)=0$, one can adapt the arguments of \cite[Proposition 4.4]{B99} (based on exponential martingales) to obtain that
 $$\limsup_{n\rightarrow+\infty} \max_{j \leq m}\left|\sum_{k=n}^{n+j}\gamma_k \Delta M_k(f)\right| =0.$$
Finally, for the last term, one uses that $\mu\mapsto K_\mu$ and $\mu\mapsto Q_\mu$  are Lipschitz continuous.
More precisely, using Lemma \ref{prop:poisequa} \textit{(i)}, \textit{(iii)} and Lemma \ref{lem:lip-prel}, we see that there exists $C>0$ such that
\begin{align*}
&\left|K_{\mu_{n+1}} Q_{\mu_{n+1}} f(X_{n+1})-K_{\mu_n} Q_{\mu_n} f(X_{n+1}) \right| \\
\leq & \left|K_{\mu_{n+1}}Q_{\mu_{n+1}} f(X_{n+1})-K_{\mu_{n+1}} Q_{\mu_n} f(X_{n+1}) \right| \\
  &\quad \ + \left|K_{\mu_{n+1}}Q_{\mu_{n}} f(X_{n+1})-K_{\mu_n} Q_{\mu_n} f(X_{n+1})\right| \\
  \leq & \Vert Q_{\mu_{n+1}} f- Q_{\mu_n} f \|_\infty \\
  &\quad \ + \left\|(K_{\mu_{n+1}}-K_{\mu_n}) (Q_{\mu_n} f)\right\|_\infty \\
\leq &C \Vert f \Vert_\infty \Vert \mu_{n+1} - \mu_n \Vert_{\textrm{TV}}\\
\leq &C \Vert f \Vert_\infty \gamma_{n+1},
\end{align*}
where for the last line, we simply used \eqref{eq:mupasapas}. This ends the proof.
\end{proof}

\section{Proof of the main results}
\label{sec:PMR}

\subsection{Proof of Theorem \ref{theo1}}\label{subsecseptun}

By Proposition  \ref{prop:GAS3}, $\{\mu^\star\}$ is a global attractor for $\Phi$. The result then follows from Corollary \ref{cor:limset}.

\subsection{Proof of Theorem \ref{th:cvmarche}}\label{subsecseptdeux}

Let us assume for the moment that there exist $C>0$ and $\rho \in (0,1)$ such that
for any starting distribution $\alpha$,
\begin{equation}
\label{eq:erggeom}
\forall n\geq 0, \ \Vert \alpha K^n_{\mu^\star} - \mu^\star \Vert_{\textrm{TV}} \leq C \rho^n.
\end{equation}
With this assumption, $\mu^\star$ is a global attractor for the discrete time dynamical system on $\mathcal{P}$ 
 induced by the map $\mu \mapsto \mu K_{\mu^\star}$. Let $\nu_n$ be the law of $X_n$, for $n\geq0$; namely $\nu_n(A)=\mathbb{P}(X_n \in A)$,
for every Borel set $A$. To prove that $\nu_n \to \mu^\star$, it is then enough to prove that the sequence $(\nu_n )_{n\geq 0}$ is an asymptotic pseudo-trajectory of this dynamics; namely that $d_\omega ( \nu_n K_{\mu^\star}, \nu_{n+1}) \to 0$. Indeed, the limit set of a bounded asymptotic pseudo-trajectory is contained in every global attractor (see e.g \cite[Theorem 6.9]{B99} or \cite[Theorem 6.10]{B99}.) So, let us firstly show that for every continuous and bounded function $f$,
\begin{equation}
\label{eq:APTdiscret}
\lim_{n \to \infty} (\nu_{n+1} (f) - \nu_n K_{\mu^\star} f)=0.
\end{equation}
By definition of the algorithm, for every $n\geq0$,
\begin{align*}
\mathbb{E}\left[f(X_{n+1}) \ | \ \mathcal{F}_n\right]
&= K_{\mu_n} f(X_n)
= Kf(X_n) + \mu_n(f) \delta(X_n)
\end{align*}
Taking the expectation, we find
$$
\nu_{n+1}(f) = \nu_n K(f) + \mathbb{E}[\mu_n(f) \delta(X_n)] = \nu_n K_{\mu^\star}(f) + \mathbb{E}[(\mu_n(f) -\mu^\star)(f)) \delta(X_n)] .
$$
But by Theorem \ref{theo1} and dominated convergence theorem,
$$
\lim_{n \to \infty} \mathbb{E}[(\mu_n -\mu^\star)(f) \delta(X_n)] =0,
$$
and hence \eqref{eq:APTdiscret} holds.

We are now free to choose any metric on $\mathcal{P}$ embedded with the weak topology. Let  $(f_k)_{k\geq 0}$ be a sequence of $C^\infty$ functions dense in the space of continuous and bounded (by $1$) functions (with respect to the uniform convergence). Consider the distance $d_\omega$ defined by
$$
d_\omega(\mu,\nu)= \sum_{k \geq 0} \frac{1}{2^k} |\mu(f_k) - \nu(f_k) |.
$$
It is well known that $d_\omega$ is a metric on $\mathcal{P}$ which induces the convergence in law. From \eqref{eq:APTdiscret} and dominated convergence Theorem, we have that $d_\omega ( \nu_n K_{\mu^\star}, \nu_{n+1}) \to 0$.

{It remains to prove inequality \eqref{eq:erggeom}. The proof is similar to Lemma \ref{lem3}. Indeed, by Lemma \ref{lem:expect}, there exists $N\geq 0$ and $\delta_0$ such that for all $x\in \MM$ such that
$\widebar{K}^N (x,\cim)\geq \delta_0 $. Using that $\mu^\star K_{\mu^\star}=\theta_\star\mu^\star$, we have
$$
K^N_{\mu^\star} =  K^N + \widebar{K}^N(\cdot, \cim) \mu^\star
$$
and hence the following lower-bound holds: $\inf_{x\in \MM} K^N_{\mu^\star}(x,\cdot) \geq \delta_0 \mu^\star(\cdot)$. It then implies bound \eqref{eq:erggeom} with the same argument as that of Lemma \ref{lem3}.}

\begin{Rq}[{Periodicity}]
Note that the previous argument shows in particular that  the uniform ergodicity of $K_{\mu^\star}$ is preserved in a non-aperiodic setting. This is the reason why the convergence in distribution of $(X_n)_{n\ge1}$ also holds in this case.
\end{Rq}
\subsection{Proof of Theorem \ref{th:2points}}
The proof relies on the following lemma.
\begin{lem}
\label{lem:ode2points}
Suppose $\Theta_1 > \Theta_2.$ Then
\begin{itemize}
\item[(i)] ${\cal P}(\MM_2)$ is positively invariant under $\Phi$ and $\Phi|{\cal P}(\MM_2)$ is globally asymptotically stable with attractor $\{\mu^\star_2\}.$
    \item[(ii)] There exists another equilibrium for $\Phi$ (i.e another QSD for $K$) $\mu^\star$ having full support (i.e~$\mu^\star(x) > 0$ for all $x \in \MM$). Furthermore $\{\mu^\star\}$ is an attractor whose basin of attraction is ${\cal P}(\MM) \setminus {\cal P}(\MM_2)$.
        \end{itemize}
\end{lem}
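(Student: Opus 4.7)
For (i), my plan would be to first verify positive invariance of ${\cal P}(\MM_2)$ under the ODE. Assumption (3), namely $\MM_2 \not\hookrightarrow \MM_1$, gives $K(x,\MM_1)=0$ for every $x \in \MM_2$; hence $K$ sends measures on $\MM_2$ into measures on $\MM_2$, and so does every power $K^n$. From formula \eqref{explicitpimu}, $\Pi_\mu \in {\cal P}(\MM_2)$ whenever $\mu \in {\cal P}(\MM_2)$, which makes the vector field $\mu \mapsto -\mu + \Pi_\mu$ tangent to the face ${\cal P}(\MM_2)$. The restriction of $\Phi$ to this face coincides with the flow built from $K_2$, which is irreducible on $\MM_2$ and has accessible cemetery by assumption (4). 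Therefore $\mathbf{H_1}$--$\mathbf{H_3}$ hold trivially for $K_2$, and $\mathbf{H_4}$ holds with a positive constant $C$ by Lemma \ref{lem:hypo4fini}. Proposition \ref{prop:GAS3} then gives global asymptotic stability of $\Phi$ on ${\cal P}(\MM_2)$ with attractor $\{\mu^\star_2\}$.

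For (ii), I would construct $\mu^\star$ explicitly by a block decomposition. Writing $K = \bigl(\begin{smallmatrix} K_1 & K_{12} \\ 0 & K_2 \end{smallmatrix}\bigr)$, a left-eigenmeasure $\mu = (\mu_1,\mu_2)$ with eigenvalue $\Theta_1$ must satisfy $\mu_1 K_1 = \Theta_1 \mu_1$ and $\mu_2(\Theta_1 I - K_2) = \mu_1 K_{12}$. By Perron--Frobenius applied to the irreducible $K_1$, one is forced to take $\mu_1 \propto \mu^\star_1$; since $\Theta_2 < \Theta_1$, the operator $\Theta_1 I - K_2$ is invertible with inverse $\Theta_1^{-1}\sum_{n\geq 0}(K_2/\Theta_1)^n$ having strictly positive entries (by irreducibility of $K_2$), and $\mu_1 K_{12}$ is a nonzero nonnegative row (by $\MM_1 \hookrightarrow \MM_2$). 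Hence $\mu_2$ is strictly positive on $\MM_2$, and normalising yields a QSD $\mu^\star$ of $K$ with full support.

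For the attractor part, my plan is to analyse $\widetilde{\Phi}_t(\mu) = \mu e^{t\Aa}/\mu g_t$ via the spectrum of $\Aa = (I - K)^{-1}$ on the finite space $\MM$. Its eigenvalues are $(1-\lambda)^{-1}$ for $\lambda \in \mathrm{Spec}(K) = \mathrm{Spec}(K_1) \cup \mathrm{Spec}(K_2)$, and a short computation shows $\mathrm{Re}\,(1-\lambda)^{-1} < (1-\Theta_1)^{-1}$ whenever $\lambda \neq \Theta_1$ and $|\lambda| \leq \Theta_1$. Thus $(1-\Theta_1)^{-1}$ is the unique dominant eigenvalue of $\Aa$, with left-eigenmeasure $\mu^\star$ and right-eigenvector $h$ solving $Kh = \Theta_1 h$. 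The same block analysis forces $h|_{\MM_2}=0$ (since $\Theta_1 \notin \mathrm{Spec}(K_2)$), while $h|_{\MM_1}$ is the strictly positive Perron right-eigenvector of $K_1$. Consequently $c(\mu) := \mu(h)$ is continuous and strictly positive exactly on ${\cal P}(\MM) \setminus {\cal P}(\MM_2)$. A standard spectral decomposition then gives
$$\mu e^{t\Aa} = e^{t/(1-\Theta_1)}\bigl(c(\mu)\,\mu^\star + r(t,\mu)\bigr), \qquad \|r(t,\mu)\| \leq C e^{-\kappa t},$$
for some $\kappa>0$, whence $\widetilde{\Phi}_t(\mu) \to \mu^\star$ whenever $c(\mu)>0$, uniformly on compacta $\{c \geq \eta > 0\}$. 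Composing with the time change $\tau_\mu$ (bounded below by $t/\|\Aa\|_\infty$) transfers the convergence to $\Phi$, yielding uniform attraction on a neighbourhood of $\mu^\star$ (where $c$ is bounded below by $c(\mu^\star)/2$) and a basin exactly equal to ${\cal P}(\MM) \setminus {\cal P}(\MM_2)$.

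The main obstacle I anticipate is controlling the remainder $r(t,\mu)$ in the spectral expansion, which requires ruling out a Jordan block at the top eigenvalue of $\Aa$. This is handled by the simplicity of $\Theta_1$ as an eigenvalue of $K_1$ (Perron--Frobenius for the irreducible $K_1$) combined with $\Theta_1 \notin \mathrm{Spec}(K_2)$, which together make $\Theta_1$ a simple eigenvalue of the block-triangular $K$; the remaining spectrum of $\Aa$ then lies strictly inside the half-plane $\{\mathrm{Re}\,z \leq (1-\Theta_1)^{-1} - \kappa\}$, giving the claimed exponential decay.
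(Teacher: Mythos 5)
Your proposal is correct and follows essentially the same route as the paper: part (i) via Lemma \ref{lem:hypo4fini} and Proposition \ref{prop:GAS3} applied to the irreducible block $K_2$, and part (ii) via Perron--Frobenius to identify $\Theta_1$ as a simple eigenvalue of the block-triangular $K$ with a full-support left eigenmeasure $\mu^\star$ and a right eigenvector vanishing exactly on $\MM_2$, followed by a spectral decomposition of $\Aa = (I-K)^{-1}$ and the time change $\tau_\mu$. The one small departure is that you build $\mu^\star$ concretely by solving the block system $\mu_1 K_1 = \Theta_1\mu_1$, $\mu_2 = \mu_1 K_{12}(\Theta_1 I - K_2)^{-1}$ and reading positivity off the Neumann series, whereas the paper simply invokes Perron--Frobenius for non-irreducible matrices and then deduces full support from $\MM_1\hookrightarrow\MM_2$ plus irreducibility of $K_2$; both yield the same object, and your version has the minor advantage of exhibiting $\mu^\star_2$-component explicitly.
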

\begin{proof}
\textit{(i)}
 It easily follows from  the assumption $\MM_2 \not \hookrightarrow \MM_1,$ and from the definitions of $K_{\mu}$ and $\Pi_{\mu}$  that ${\cal P}(\MM_2)$ is positively invariant under $\Phi.$
By irreducibility of $K_2,$ Lemma \ref{lem:hypo4fini} and Proposition  \ref{prop:GAS3},  $\mu^\star_2$ is then a global attractor for $\Phi|{\cal P}(\MM_2).$

\textit{(ii)} Let $d_i$ be the cardinal of $\MM_i$ and  $d = d_1 + d_2.$ Identifying ${\cal B}(\MM_i, \R)$  (respectively ${\cal B}(\MM, \R)$ ) with column vectors of $\R^{d_i}$ (respectively $\R^{d}$) and ${\cal M}(\MM_i)$ (respectively  ${\cal M}(\MM)$) with row vectors of  $\R^{d_i}$ (respectively $\R^{d}$),
 $K$ can be written  as a $d \times d$ block triangular matrix
$$K = \left(
    \begin{array}{cc}
      K_1 & K_{12} \\
      0 & K_2 \\
    \end{array}
  \right),$$
  where for each $i = 1, 2, K_i$  is a $d_i \times d_i$ irreducible matrix.

  Let $E^l_{\Theta_1}$ and $E^r_{\Theta_1}$ be the left and right eigenspaces associated to $\Theta_1.$ That is  $$E^l_{\Theta_1} = \{ \mu \in {\cal M}(\MM): \: \mu K = \Theta_1 \mu\}$$ and $$E^r_{\Theta_1} = \{ f \in {\cal B}(\MM,\R) : \:  K f = \Theta_1 f\}.$$ We claim that
  \begin{equation}
  \label{eq:El}
  E^l_{\Theta_1} = \R \mu^\star
   \end{equation}
   for some $\mu^\star \in {\cal P}(\MM)$ having full support (i.e~   $\mu(x) > 0$ for all $x$);  and
   \begin{equation}
   \label{eq:Er}
   E^r_{\Theta_1} = \R f^*
    \end{equation} for some $f^* \in {\cal B}(\MM,\R^+)$ satisfying  $$f^*(x) > 0 \Leftrightarrow x \in \MM_1,$$  and $\mu^\star(f^*) = 1.$

Actually,  by irreducibility of $K_1$  and the Perron Frobenius Theorem (for irreducible matrices), $\Theta_1$ is a simple eigenvalue of $K_1$ and there exists $g \in {\cal B}(\MM_1, \R) := \R^{d_1}$ with  positive entries  such that  $K_1 g = \Theta_1 g.$ $\Theta_1$ being strictly larger than the spectral radius $\Theta_2$ of $K_2$, $\Theta_1$ is not an eigenvalue of $K_2$.  Thus, it is also simple for $K$ and (\ref{eq:Er}) holds with $f^*$ defined by $f^*(x) = g(x)$ for $x \in \MM_1$ and $f^*(x) = 0$ for $x \in \MM_2.$

Again by the Perron Frobenius theorem   (but this time for non irreducible matrices) there exists $\mu^\star \in {\cal P}(\MM) \cap E^l_{\Theta_1},$ so that, by simplicity of $\Theta_1,$ (\ref{eq:El}) holds. It remains to check  that $\mu^\star$ has full support. First, observe that $\mu^\star$ cannot be supported by $\MM_2$ for otherwise $\mu^\star$ would be a left eigenvector of $K_2$ and $\Theta_1$ an eigenvalue of $K_2.$  Thus there exists $x \in \MM_1$ such that $\mu^\star(x) > 0,$ but  since $x \hookrightarrow y$, then for all $y{ \in \MM},$ we have $\mu^\star(y) > 0.$

Replacing $f^*$ by $\frac{f^*}{\mu^\star(f^*)}$ we can always assume that $$\mu^\star(f^*) = 1.$$ This ends the proof of the claim.

Let  $(f^*)^{\perp} = \{\nu \in {\cal M}(\MM) : \: \nu(f^*) = 0\}.$
It follows from what precedes that   the splitting
  $${\cal M}(\MM)  = \R \mu^\star \oplus (f^*)^{\perp}$$ is invariant by the map   $\nu \mapsto \nu K,$ hence also by $\nu \mapsto \nu \Aa,$ and $ \nu \mapsto \nu e^{t \Aa}.$
  Let $\Aa^\perp$ denote the operator on  $(f^*)^{\perp}$   defined by $$\nu \Aa^\perp = \nu \Aa - \frac{1}{1-\Theta_1} \nu.$$
  For all  $\mu \in {\cal P}(\MM) \setminus {\cal P}(\MM_2),$ $\mu(f^*) \neq 0$ and  $\mu$ decomposes as $\mu = \mu(f^*) \mu^\star + \widetilde{\mu}$ with $\widetilde{\mu} = \mu - \mu(f^*) \mu^\star \in (f^*)^{\perp}.$ Therefore
  $$\mu e^{t \Aa} = \mu(f^*) e^{\frac{t}{1-\Theta_1} } \mu^\star + \widetilde{\mu} e^{t \Aa} = \mu(f^*) e^{\frac{t}{1-\Theta_1}}  \left ( \mu^\star + \frac{ \widetilde{\mu}}{\mu(f^*)} e^{t \Aa^\perp}\right )$$
  and
  $$\widetilde{\Phi}_t(\mu) := \frac{ \mu e^{t \Aa}}{\mu e^{t \Aa} \1} = \frac{\mu^\star + \frac{ \widetilde{\mu}}{\mu(f^*)} e^{t \Aa^\perp}}{1 + \frac{ \widetilde{\mu}}{\mu(f^*)} e^{t \Aa^\perp} \1}.$$
 Now, remark that any eigenvalue $\Lambda$ of $\Aa^\perp$ writes $$\Lambda =  \frac{1}{1-\lambda} - \frac{1}{1-\Theta_1}$$ where  $\lambda = a + ib$ is an eigenvalue of $K$ distinct from $\Theta_1.$ In particular, $a < \Theta_1 < 1.$ Then,
 $$Re(\Lambda) = \frac{1-a}{(1-a)^2 + b^2} - \frac{1}{1-\Theta_1} \leq \frac{1}{1-a} -  \frac{1}{1-\Theta_1} < 0.$$
The fact that all eigenvalues of  $\Aa^\perp$ have negative real part implies that $\|e^{t \Aa}\| \rightarrow 0$ as $t \rightarrow \infty.$
This proves that $\lim_{t \rightarrow \infty} \widetilde{\Phi}_t(\mu) =  \mu^\star$ and that for every compact set ${\cal K} \subset {\cal P}(\MM) \setminus {\cal P}(\MM_2)$  the convergence is uniform in $\mu \in {\cal K}$ (because $\mu \mapsto \mu(f^*)$ is separated from zero on such a compact).
This shows that $\mu^\star$ is an attractor for $\widetilde{\Phi}$ whose basin is ${\cal P}(\MM) \setminus {\cal P}(\MM_2).$ Proceeding like in the end of the proof of Lemma \ref{lem:suffisGAS} we conclude that the same is true for $\Phi.$
\end{proof}
 We now pass to the proof of   Theorem \ref{th:2points}.\smallskip

\noindent  \textit{(i)} Let $L$ be the limit set of $\{\mu_n\}.$  If $L \subset {\cal P}(\MM_2),$ then  $L = \{\mu^\star_2\}$ because $L$ is compact invariant and, by Lemma \ref{lem:ode2points} (i), $\{\mu^\star_2\}$ is the only compact invariant subset of ${\cal P}(\MM_2)$. If  $L \cap {\cal P}(\MM) \setminus {\cal P}(\MM_2) \neq  \emptyset,$ then by Lemma \ref{lem:ode2points} (ii) and Corollary \ref{cor:limset}, $L = \{\mu^\star\}.$

  \textit{(ii)} If $X_0 \in \MM_2,$ then  $\mu_0 = \delta_{X_0} \in {\cal P}(\MM_2)$ and,  by the definition of $(X_n)$ (see equation (\ref{dynamicsxn})),    $(X_n)$ lives in $\MM_2.$ This implies  that $\mu_n \rightarrow \mu^\star_2$  by assertion   \textit{(i)}.

    \textit{(iii)} If  $X_0 \in \MM_1,$ the point $\mu^\star$ is straightforwardly attainable (in the sense of \cite[Definition 7.1]{B99})  and then, by \cite[Theorem 7.3]{B99}, we have
$$
\mathbb{P}\left(\lim_{n \to \infty} \mu_n = \mu^\star\right)>0.
$$

\textit{(iv)} Let us now prove the last point of Theorem \ref{th:2points} using an \textit{ad hoc} argument under the additional assumption:
\begin{equation}
\label{eq:CNS2pt}
\sum_{n \geq 0} \prod_{i=0}^{n}(1- \gamma_i)  < + \infty.
\end{equation}
If $X_0 \in \MM_2$ there is nothing to prove. We then suppose that $X_0 = x \in \MM_1.$  Clearly there exists $n_0 \geq 1$ such that $X_{n_0} \in \MM_2$ with positive probability.
Using the estimate (\ref{eq:uniformtheta}), the definition of $(X_n)$ and the recursive formula (\ref{eq:mupasapas}) we get that for all $n > n_0$:

$$\PE(X_{n+1} \in \MM_1 | {\cal F}_n) \leq (1- c \Theta_2) \mu_n(\MM_1)$$
almost surely on the event $\{X_n \in \MM_2\}$
and
$$\mu_n(\MM_1) =   \mu_{n_0}(\MM_1)\prod_{i = n_0 + 1}^n (1-\gamma_i) \leq \prod_{i = n_0 + 1}^n (1-\gamma_i)$$ almost surely on the event $$ \{X_{n_0}, X_{n_0 + 1}, \ldots, X_n \in \MM_2\}.$$
Therefore
$$
\mathbb{P}(X_{n_0}, X_{n_0 + 1}, \ldots, X_{n+1} \in \MM_2) = \E \left(\PE(X_{n+1} \in \MM_2 | {\cal F}_n) \1_{ \{X_{n_0}, X_{n_0 + 1}, \ldots, X_n \in \MM_2\}}\right)$$
$$ \geq \left (1 - (1-c\Theta_2) \prod_{i = n_0 + 1}^n (1-\gamma_i) \right)  \PE \left (X_{n_0}, X_{n_0 + 1}, \ldots, X_n \in \MM_2  \right );
$$
and, consequently,
$$\mathbb{P} \left (\forall n \geq n_0 \:  X_n \in \MM_2  \right ) \geq
 \prod_{n \geq n_0 + 1} \left (1 - (1-c\Theta_2)  \prod_{i=n_0 + 1}^{n}(1- \gamma_i)  \right) \PE(X_{n_0} \in \MM_2).$$
The {right hand side of the} previous bound is positive if and only if \eqref{eq:CNS2pt} holds.

\subsection{Proof of Theorem \ref{prop:diff1}}
We begin by recalling a classical lemma about the $L^2$-control of the distance between the Euler scheme and the diffusion (see $e.g.$ \cite[Theorem B.1.4]{bouleau-lepingle} for a very close statement).
\begin{lem}\label{lem:contL2Euler}
Assume that $b$ and $\sigma$ are Lipschitz continuous functions. Then, for every positive $T$, there exists a constant $C(T)$ such that for every starting point $x$ of $\ER^d$,
$$\ES_x\left[\sup_{t\in[0,T]}|\xi_t^h-\XX_t|^2\right] \le C(T)(1+|x|^2)  h.$$
\end{lem}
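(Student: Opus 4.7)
\medskip

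\noindent\textbf{Proof plan.} The statement is the classical strong $L^2$ convergence of the Euler scheme at rate $h^{1/2}$; the only slight wrinkle is that the paper's $\xi^h$ is the \emph{piecewise constant} (not piecewise affine) version, so one must first pass to a piecewise continuous interpolant. My plan is therefore the following three-step scheme.

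\medskip

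\noindent\textbf{Step 1 (introduce the interpolated Euler scheme).} Define the piecewise continuous Euler scheme $(\widetilde{\xi}_t^h)_{t\ge 0}$ by $\widetilde{\xi}_{nh}^h=\xi_{nh}^h$ and, for $t\in[nh,(n+1)h)$,
$$\widetilde{\xi}_t^h=\xi_{nh}^h+(t-nh)\,b(\xi_{nh}^h)+\sigma(\xi_{nh}^h)(W_t-W_{nh}).$$
By the triangle inequality,
$$\ES_x\Bigl[\sup_{t\in[0,T]}|\xi_t^h-\XX_t|^2\Bigr]\le 2\,\ES_x\Bigl[\sup_{t\in[0,T]}|\xi_t^h-\widetilde{\xi}_t^h|^2\Bigr]+2\,\ES_x\Bigl[\sup_{t\in[0,T]}|\widetilde{\xi}_t^h-\XX_t|^2\Bigr],$$
so it suffices to bound each of the two pieces by $C(T)(1+|x|^2)\,h$.

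\medskip

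\noindent\textbf{Step 2 (uniform moment bound and control of $\xi^h-\widetilde{\xi}^h$).} Since $b$ and $\sigma$ are Lipschitz continuous they have linear growth: $|b(y)|+\|\sigma(y)\|\le C(1+|y|)$. A standard discrete Gronwall argument applied to the Euler recursion, combined with Doob's inequality to handle the martingale increments, produces
$$\sup_{n\le T/h}\ES_x[|\xi_{nh}^h|^2]\le C(T)(1+|x|^2).$$
For $t\in[nh,(n+1)h)$ the difference $\xi_t^h-\widetilde{\xi}_t^h=-(t-nh)b(\xi_{nh}^h)-\sigma(\xi_{nh}^h)(W_t-W_{nh})$ is thus controlled, via Doob's maximal inequality applied on each subinterval and then a union bound, by $C(T)(1+|x|^2)\,h$, handling the first piece.

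\medskip

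\noindent\textbf{Step 3 (Gronwall on $\widetilde{\xi}^h-\XX$).} Write $\phi_h(s):=h\lfloor s/h\rfloor$, so that
$$\widetilde{\xi}_t^h-\XX_t=\int_0^t\bigl(b(\xi_{\phi_h(s)}^h)-b(\XX_s)\bigr)ds+\int_0^t\bigl(\sigma(\xi_{\phi_h(s)}^h)-\sigma(\XX_s)\bigr)dW_s.$$
Cauchy--Schwarz on the drift term, Doob together with the Burkholder--Davis--Gundy inequality on the martingale term, and the Lipschitz assumption yield
$$\ES_x\Bigl[\sup_{t\le u}|\widetilde{\xi}_t^h-\XX_t|^2\Bigr]\le C\int_0^u \ES_x\bigl[|\xi_{\phi_h(s)}^h-\XX_s|^2\bigr]ds.$$
Splitting $\xi_{\phi_h(s)}^h-\XX_s=(\xi_{\phi_h(s)}^h-\widetilde{\xi}_s^h)+(\widetilde{\xi}_s^h-\XX_s)$ and inserting the Step 2 estimate for the first summand gives
$$\ES_x\Bigl[\sup_{t\le u}|\widetilde{\xi}_t^h-\XX_t|^2\Bigr]\le C(T)(1+|x|^2)\,h+C\int_0^u \ES_x\Bigl[\sup_{r\le s}|\widetilde{\xi}_r^h-\XX_r|^2\Bigr]ds,$$
and Gronwall's lemma closes the estimate with $u=T$.

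\medskip

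\noindent\textbf{Expected obstacle.} There is no serious obstacle; the argument is routine. The only point that requires a little care is the order of operations: one must first establish the uniform moment bound $\sup_{n\le T/h}\ES_x[|\xi_{nh}^h|^2]\le C(T)(1+|x|^2)$, because without it the linear growth of $b$ and $\sigma$ would prevent the local-in-$h$ estimate of Step 2 from carrying the correct $(1+|x|^2)$ prefactor, and Gronwall in Step 3 would not produce a clean bound. Once the moment bound is in hand, the rest is bookkeeping.
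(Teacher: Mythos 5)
The paper does not prove this lemma at all: it is presented as a recalled ``classical lemma'' with a pointer to \cite[Theorem B.1.4]{bouleau-lepingle}. Your three-step scheme (pass to the continuously interpolated Euler scheme, establish a uniform second-moment bound, close with Gronwall and BDG) is the standard textbook route and is exactly how one would reconstruct such a result, so the approach is the right one and Steps 1 and 3 are fine.

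Step 2 as written, however, contains a genuine gap. After taking $\sup_{t\in[0,T]}$, the stochastic part of $\xi_t^h-\widetilde{\xi}_t^h$ reduces to controlling
$\max_{n< T/h}\ \sup_{t\in[nh,(n+1)h]}|W_t-W_{nh}|$.
Doob on a single subinterval gives $\ES\bigl[\sup_{t\in[nh,(n+1)h]}|W_t-W_{nh}|^2\bigr]\le 4h$, but the ``union bound'' $\ES[\max_n A_n]\le\sum_n\ES[A_n]$ then produces $4T$, which is useless; a sharper argument using the sub-Gaussian tails of each $A_n^{1/2}$ and a maximum over $T/h$ blocks gives $O(h\log(T/h))$, not $O(h)$ --- this is precisely the L\'evy modulus of continuity of Brownian motion. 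So your Step~2, done correctly, yields the bound $C(T)(1+|x|^2)\,h\log(1/h)$ for the piecewise-\emph{constant} scheme, not the stated $C(T)(1+|x|^2)\,h$. (The clean rate $h$ in $L^2$-sup-norm holds for the continuously interpolated scheme $\widetilde{\xi}^h$; the stepwise-constant $\xi^h$ inherits the extra logarithm, and it is plausible that the cited reference in fact concerns $\widetilde{\xi}^h$.) This discrepancy is harmless for the paper's subsequent use of the lemma, which only needs $\ES_x[\sup_{t\le T}|\xi_t^h-\XX_t|^2]\to 0$ as $h\to 0$, but you should not claim the naive union bound closes at rate $h$; either state the $h\log(1/h)$ bound, or restrict the claim to $\widetilde{\xi}^h$.
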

We continue with some uniform controls of the exit time of $D$. For a given set $A$ and a path $w:\ER_+\mapsto\ER^d$, we denote by $\tau_A(w)$ the exit time of $A$ defined by:
$$\tau_A(w)=\inf\{t>0,w(t)\in A^c\}.$$
\begin{lem}\label{lem:consunifellip}
Assume that $b$ and $\sigma$ are Lispchitz continuous functions and that $\sigma\sigma^*\ge \rho_0 I_d$. Then,\smallskip

\noindent

\noindent (i) Let $\delta>0$ and set $D_\delta=\{x\in\ER^d, d(x,D)\le \delta\}$. For each $t_0>0$, we have
$$\sup_{x\in D} \PE_x(\tau_{D_\delta}(\xi)>t_0)<1.$$

\noindent (ii)  There exist some compact subsets ${\cal K}$ and $\widetilde{{\cal K}}$ of $D$ such that ${\cal K}\subset \widetilde{{\cal K}}$ and such that there exist some positive $t_0$, $t_1$, such that for all $h>0$,

$$\forall x\in D,\; \PE_x\left({\xi}_{t_0}\in {\cal K},  \tau_D(\xi^h)>t_0\right)>0$$ and
$$\inf_{x\in {\cal K}}\PE_x( \xi_{t_1}\in {\cal K}\,\textnormal{and}\,\forall t\in[0,t_1], {\xi}_{t}\in \widetilde{{\cal K}}) >0.$$


\end{lem}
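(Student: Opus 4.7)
The plan for (i) is to exploit the uniform ellipticity $\sigma\sigma^{\ast}\ge\rho_0 I_d$ via Aronson's Gaussian lower bound on the transition density $p_t(x,y)$ of $\xi$, which is available thanks to the $\mathcal{C}^2$ regularity of $b,\sigma$. Since $D$ is bounded, so is $D_\delta$, and one can fix a ball $B\subset D_\delta^c$ at bounded distance from $D$. Aronson's bound yields $\inf_{x\in D,\,y\in B}p_{t_0}(x,y)\ge c(t_0)>0$, whence $\inf_{x\in D}\PE_x(\xi_{t_0}\in B)\ge c(t_0)|B|>0$. Since $\{\xi_{t_0}\in B\}\subset\{\tau_{D_\delta}(\xi)\le t_0\}$, the supremum in (i) is bounded above by $1-c(t_0)|B|<1$.

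For (ii), I would pick $x^{\ast}\in D$ with $d(x^{\ast},\partial D)>4r$ for some $r>0$ and set $\mathcal{K}=\bar B(x^{\ast},r)$, $\widetilde{\mathcal{K}}=\bar B(x^{\ast},3r)$. For the \emph{second} displayed inequality (which does not involve $\xi^h$), I would use the Stroock--Varadhan support theorem applied to the linear path $\phi_x(t)=(1-t/t_1)x+(t/t_1)x^{\ast}$ for $x\in\mathcal{K}$: this path stays inside $\mathcal{K}$, so for any $\varepsilon\in(0,r)$,
$$\PE_x\bigl(\sup\nolimits_{t\le t_1}|\xi_t-\phi_x(t)|<\varepsilon\bigr)>0,$$
and on this event $\xi_t\in\widetilde{\mathcal{K}}$ for all $t\in[0,t_1]$ while $\xi_{t_1}\in\bar B(x^{\ast},\varepsilon)\subset\mathcal{K}$. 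Continuity in $x$ of the left-hand side (classically obtained via Girsanov) combined with compactness of $\mathcal{K}$ upgrades this pointwise positivity to a uniform lower bound.

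For the \emph{first} displayed inequality, I would split on the value of $h$. When $h>t_0$, the Euler scheme is constant, equal to $x\in D$ on $[0,t_0]$, so $\tau_D(\xi^h)>t_0$ holds automatically, while $\PE_x(\xi_{t_0}\in\mathcal{K})>0$ again by Aronson's bound. When $h\le t_0$, set $n=\lfloor t_0/h\rfloor$: the event $\{\tau_D(\xi^h)>t_0\}$ reduces to the finitely many iterates $\xi^h_{h},\dots,\xi^h_{nh}$ lying in $D$. By uniform ellipticity, the Euler chain $(\xi^h_{h},\dots,\xi^h_{nh})$ has non-degenerate Gaussian one-step transitions, and conditionally on this vector, the continuous endpoint $\xi_{t_0}$ is a functional of the Brownian bridges on the sub-intervals $[kh,(k+1)h]$ (and $[nh,t_0]$), whose conditional density on $\R^d$ is strictly positive by another application of Aronson's bound. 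Thus the joint law of $(\xi_{t_0},\xi^h_{h},\dots,\xi^h_{nh})$ admits a strictly positive density on $\R^{d(n+1)}$, and integrating over the open set $\mathcal{K}\times D^n$ yields positivity.

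The main obstacle is the first inequality of (ii), because it couples $\xi$ and $\xi^h$ and must hold for every $h>0$, including intermediate values of $h$ where a moderate number of Euler steps occur and neither the trivial ``$h$ large'' case nor a pure smallness argument based on Lemma~\ref{lem:contL2Euler} is applicable. The cleanest resolution is the joint-density positivity argument above; an alternative (and perhaps more robust) approach is to use Girsanov to tilt the driving Brownian motion by a bounded deterministic drift $u:[0,t_0]\to\R^d$ that forces both the continuous diffusion and the Euler iterates along a safe polygonal path from $x$ to $x^{\ast}$ inside $D$, and then transfer the resulting positivity back to the original measure via the strictly positive Radon--Nikodym density.
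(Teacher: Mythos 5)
The lemma's first displayed inequality in $(ii)$ contains a typo you took at face value: it should read $\xi^h_{t_0}\in\mathcal{K}$, not $\xi_{t_0}\in\mathcal{K}$. This is clear both from the paper's proof, which immediately reduces the claim to showing $\PE_x\bigl(\xi^h_{[t_0/h]h}\in\mathcal{K},\,\xi^h_{\ell h}\in D,\,\ell\in\{0,\dots,[t_0/h]\}\bigr)>0$ (using that the Euler path is stepwise constant, so $\xi^h_{t_0}=\xi^h_{[t_0/h]h}$), and from how the lemma is invoked in Step~1 of the proof of Theorem~\ref{prop:diff1}, where the displayed conclusion is $\PE_x(\xi^h_{t_0}\in\mathcal{K},\,\tau_D(\xi^h)>t_0)>0$. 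Under the intended reading the claim is purely about the Euler chain, and the proof is a one-liner: each one-step transition of $\xi^h$ is a non-degenerate (uniformly elliptic) Gaussian with bounded drift, so the joint law of the finitely many iterates has a strictly positive density on $\R^{d(n+1)}$, and integrating over the open set $\mathcal{K}\times D^n$ gives positivity. Your ``main obstacle'' --- coupling the continuous path $\xi_{t_0}$ with the killed Euler chain for all $h$ simultaneously --- is therefore a red herring, and the elaborate joint-density and Girsanov-tilting machinery you assemble to handle it is unnecessary (and, as you half-admit, the joint-density version is not as clean as claimed, because $\xi_{t_0}$ and $\xi^h_{\cdot}$ are driven by the same Brownian motion and their joint law does not factor nicely).

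For part $(i)$ your Aronson route is a genuinely different and perfectly valid argument: fix a ball $B\subset D_\delta^c$, use the uniform Gaussian lower bound on $p_{t_0}(x,y)$ to get $\inf_{x\in D}\PE_x(\xi_{t_0}\in B)>0$, and note that $\{\xi_{t_0}\in B\}\subset\{\tau_{D_\delta}\le t_0\}$. The paper instead proves continuity of $x\mapsto\PE_x(\tau_{D_\delta}(\xi)>t_0)$ on the compact $\widebar D$ (via $L^0$-continuity of $x\mapsto\xi^x$ and dominated convergence), observes that this probability is $<1$ pointwise by ellipticity, and concludes by compactness. Your approach is more quantitative but invokes heavier machinery; the paper's is softer and stays closer to first principles. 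For the second displayed inequality of $(ii)$ your argument is essentially identical to the paper's: linear interpolation from $x\in\mathcal{K}$ to the center, the support theorem with uniform constants over $\mathcal{K}$ (the paper cites the relevant result directly; you propose to upgrade pointwise positivity to uniformity via continuity and compactness, which also works). The only cosmetic difference is the choice $\widetilde{\mathcal{K}}=\widebar B(x^\ast,3r)$ versus the paper's $\widebar B(x_0,2r)$, which is immaterial.
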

\begin{proof} $(i)$ Using the fact that for every $t_0>0$, $\sup_{s\in[0,t_0]}|\xi_s^x-\xi_s^{x_0}|\rightarrow0$ in probability when $x\rightarrow x_0$, one deduces from the dominated convergence theorem that
$x\mapsto \PE_x(\tau_{D_\delta}(\xi)>t_0)$ is continuous on $\widebar{D}$. As a consequence, it is enough to show that  for every $x\in D$, $\PE_x(\tau_{D_\delta}(\xi)>t_0)<1$. This last point is a consequence of the ellipticity condition.\smallskip

\noindent $(ii)$ Let us begin by the first statement. Let $t_0>0$. Since the Euler scheme is stepwise constant, it is enough to show that $\PE_x(\xi^h_{[t_0/h] h}\in {\cal K}, \xi_{\ell h}^h\in D,\ell \in \{ 0,\ldots,[t_0/h]\})>0$. This follows easily from the fact that, under the ellipticity condition, the transition kernel of the discrete Euler scheme is a (uniformly) non-degenerated Gaussian with bounded bias (on compact sets). \smallskip

\noindent For the second statement, let $x_0\in D$, ${\cal K}=\widebar{B}(x_0,r)$ and $\widetilde{{\cal K}}=\widebar{B}(x_0,2r)$ where $r=\frac 14 d(x_0,\partial D)$. For every $x\in {\cal K}$, let  $\psi^{x,x_0}:[0,1]\rightarrow\ER$ denote the function defined by
$\psi^{x,x_0}(t)=t x_0+(1-t) x$, $t\in[0,1].$ Let $t_1>0$. Since $\psi^{x,x_0}$ is ${\cal C}^1$ and $\sup_{x\in {\cal K},t\in[0,1]}|\partial_t \psi^{x,x_0}|<+\infty$, it is well-known (see $e.g.$ \cite[Theorem 8.5]{bass_book}) that
for all $\varepsilon>0$, there exists a positive $c_\varepsilon$ such that
$$\forall x\in {\cal K},  \quad \PE_x(\sup_{t\in[0,t_1]}|\xi_t-\psi^{x,x_0}(t)|\le \varepsilon)\ge c_\varepsilon.$$
Taking $\varepsilon= \frac{r}{2}$, the result follows.

\end{proof}


\textit{Proof of Theorem \ref{prop:diff1}.} First, let us remark that by the ellipticity condition, we have for every starting point $x$ of $D$, $\tau_D(\xi^h)=\tau_{\widebar{D}}(\xi^h)$ $a.s.$ Actually, as mentioned before, ${\cal L}(Y_{k+1}^h|Y_k^h=x)$  is a non-degenerate Gaussian random variable, which implies that $\PE(Y_{k+1}^h\in \partial D|Y_k^h\in D)=0$. Furthermore, if $x\in\partial D$, $\tau_{\widebar{D}}(\xi^h)=0$.  Then, if one denotes by $\mu_h^\star$ the unique QSD of ${Y}^h$ killed when leaving ${\widebar D}$,  it follows that $\mu_h^\star(\partial D)=0$.
Without loss of generality, one can thus work with $D$ instead of $\widebar{D}$ in the sequel. \\

Let $K^h$ denote the sub-Markovian kernel related to the discrete-time Euler scheme  killed when it leaves $D$. Namely, for every bounded Borelian function $f:\ER^d\rightarrow\ER$,
$$K^h f(x)=\ES[f(x+h b(x)+\sqrt{h}\sigma(x)Z)1_{\{x+h b(x)+\sqrt{h}\sigma(x)Z\in D\}} ]$$
where $Z\sim{\cal N}(0,I_d)$. Let $\rho_h$ denote the extinction rate related to $\mu_h^{\star}$. We have
\begin{equation}\label{eq:caracQSD2}
\int K^h f(x)\mu_h^\star(dx)=\rho_h \mu_h^\star(f).
\end{equation}
Setting $\lambda_h=\log(\rho_h)/h$, it easily follows from an induction that for every positive $t$, for every bounded measurable function $f:D\rightarrow\ER$,
\begin{equation}\label{eq:caracQSDapp}
\ES_{\mu_h^{\star}}[f(\xi_t^h)1_{\tau_D(\xi^h)>t}]=C_h(t)\exp(-\lambda_h t)\mu_h^{\star}(f)
\end{equation}
where $C_h(t)=\exp(\frac{t}{h}-\lfloor \frac{t}{h}\rfloor)$.\smallskip

The aim is now to first show that $(\mu_h^\star)_h$ is tight on the open set $D$ and then, to prove that every weak limit  $\mu$   (for the weak topology induced by the usual topology on $D$) is a QSD. The convergence will follow from the uniqueness of
$\mu^\star$ given in Theorem 5.5 of  \cite[Chapter $3$]{P95}. 
This task is divided in three steps:

\textbf{Step 1} (Bounds for $\lambda_h$):   We prove that there exist some positive $\lambda_{\rm min}$, $\lambda_{\rm max}$ and $h_0$ such that for any $h\in(0,h_0)$, $\lambda_h$ defined in \eqref{eq:caracQSDapp} satisfies
$\lambda_{\rm min}\le \lambda_{h}\le\lambda_{\rm max}$. Let us begin by the lower-bound. For every $x\in D$ and $\delta>0$,
\begin{align*}
\PE_x(\tau_D(\xi^h)> t_0)&\le \PE_x(\{\tau_{D_\delta}(\xi)> t_0 \} \cup \{ \sup_{t\in[0,t_0]}|\xi^h-\xi | \geq \delta\})\\
&\le \PE_x(\tau_{D_\delta}(\xi)> t_0)+ \PE_x\left( \sup_{t\in[0,t_0]} |\xi^h-\xi | \geq \delta\right).
\end{align*}
By Lemma \ref{lem:consunifellip}(i) and Lemma \ref{lem:contL2Euler}, one easily deduces that for $h$ small enough,
$$a:=\sup_{x\in D} \PE_x(\tau_D(\xi^h) > t_0) <1.$$
Recalling that $\xi^h$ is stepwise constant, note that $t_0$ can be replaced by $t_0^h=\lfloor t_0/h\rfloor h$ in the previous inequality. 
Then,
\begin{align*}
\PE_x(\tau_D(\xi^h)>  k t_0^h)&=\PE_x(\tau_D(\xi^h)> k t_0^h|\tau_D(\xi^h)> (k-1)t_0^h)\PE_x(\tau_D(\xi^h)> (k-1)t_0^h)\\
&\le a \PE_x(\tau_D(\xi^h)> (k-1)t_0^h).
\end{align*}
By induction, it follows that there exists $h_0\in(0,t_0)$ such that for every $t>0$ and $h\in(0,h_0)$,
$$\PE_x(\tau_D(\xi^h)>t)\le a^{\lfloor t/t_0^h\rfloor}\le C \exp (-\lambda_{\rm min} t)$$
with $\lambda_{\rm min}=-\log(a)/(t_0-h_0)$. Since the right-hand side does not depend on $x$, one deduces that $\PE_{\mu_h^\star}(\tau_D(\xi^h)>t)\le C \exp (-\lambda_{\rm min} t).$ Since this inequality holds for every $t$ (with $C$ not depending on $t$), it follows from \eqref{eq:caracQSDapp} that $\lambda_{\rm min} <\lambda_h$ (using that $C(t)\le e$). This yields the lower-bound. \smallskip
As concerns the upper-bound, one first deduces from  Lemmas \ref{lem:contL2Euler} and  \ref{lem:consunifellip}(ii)   that there exists a compact subset ${\cal K}$ of $D$ such that there exist some positive $t_0$, $t_1$, $h_0$ and $\varepsilon$, such that for every $x\in D$,  and $h\in(0,h_0)$,
$\PE_x(\xi^h_{t_0}\in {\cal K},\tau_D(\xi^h)>t_0)>0$ and
\begin{equation}\label{eq:cdndion}
\inf_{y\in {\cal K}}\PE_y( \xi^h_{t_1}\in {\cal K}\,\textnormal{and}\,\forall t\in[0,t_1], \xi^h_{t}\in D)\ge\varepsilon.
\end{equation}
Once again, using the fact that $\xi^h$ is stepwise constant, one sets  $t_0^h=\lfloor t_0/h\rfloor h$ and $t_1^h=\lfloor t_1/h\rfloor h$. We have
$$\PE_x(\tau_D({\xi^h})>t)\ge\PE_x(\xi^h_{t_0^h+ \ell t_1^h}\in {\cal K}, \ell\in\{0,\ldots, N_h(t)\}, \tau_D({\xi^h})>t)$$
with $N_h(t)=\inf \{\ell, t_0^h+\ell t_1^h>t\}$. Using  the Markov property and an induction, it follows from \eqref{eq:cdndion} that, for $h$ small enough,  for every $t>t_0^h$, for every $x\in D$,
\begin{align*}
\PE_x(\tau_D({\xi^h})>t)&\ge \PE_x(\xi^h_{t_0}\in {\cal K},\tau_D(\xi^h)>t_0) \varepsilon^{N_h(t)}\\
&\ge C\PE_x(\xi^h_{t_0}\in {\cal K},\tau_D(\xi^h)>t_0)\exp\left(\frac{\log(\varepsilon)}{2 t_1} t\right),
\end{align*}
where in the last inequality, we used that for $h$ small enough
$$N_h(t)=\left\lfloor \frac{t-t_0^h}{t_1^h}\right\rfloor +1\ge \frac{t}{t_1^h} -\frac{t_0^h}{t_1^h}\ge \frac{t}{2t_1}-\delta,\quad \delta>0.$$
Set $\lambda_{\rm max}=-\frac{\log(\varepsilon)}{2t_1}$. Since $\PE_x(\xi^h_{t_0}\in {\cal K},\tau_D(\xi^h)>t_0)>0$ for every $x\in D$, it follows from what precedes that
for every $t>t_0$,
 $$\PE_{\mu_h}(\tau_D({\xi^h})>t)\ge c\exp(-\lambda_{\rm max} t),$$
 where $c$ is a positive constant (which does not depend on $t$). By  \eqref{eq:caracQSDapp}, one can conclude that $\lambda_h<\lambda_{\rm max}$ (using that $C(t)\ge 1$).\smallskip

\textbf{Step 2} (Tightness of $(\mu_h^\star)$):
We show that $(\mu_h^\star)_{h\in(0,h_0]}$ is tight on $D$. For $\delta>0$, set $B_\delta:=\{x\in D, d(x,\partial D)\le \delta\}$. We need to prove that for every $\varepsilon>0$,
there exists  $\delta_\varepsilon>0$ such that for every $h\in(0,h_0)$, $\mu_h^\star(B_{\delta_\varepsilon})\le\varepsilon$. First, by \eqref{eq:caracQSDapp} (applied with $t=1$) and Step $1$, 
$$\mu_h^\star(B_{\delta})\le \ES_{\mu_h^\star}\left[\1_{\{\xi_1^h\in B_\delta\}\cap \{\tau_D(\xi^h)>1\}}\right]\le \PE_{\mu_h^\star}(\xi_1^h\in B_\delta).$$
But, under the ellipticity condition, ${\cal L}(\xi_1^h|\xi^h_0=x)$ admits a density $p_1^h(x,.)$ w.r.t. the Lebesgue measure $\lambda_d$ and by \cite[Theorem 2.1]{lemaire-menozzi} (for instance),
$$\sup_{x,x'} p_1^h(x,x')\le C$$
where $C$ does not depend on $h$. As a consequence, for every $x\in D$,
$$ \PE_{\mu_h^\star}(\xi_1^h\in B_\delta)\le \int p_1^h(x,x')\lambda_d(dx')\le C\lambda_d(B_\delta).$$
The tightness follows.

\textbf{Step 3} (Identification of the limit): Let $(\mu_{h_n}^\star)_n$ denote a convergent subsequence to $\mu$. One wants to show that $\mu=\mu^\star$  (where $\mu^\star$ stands for the unique QSD of the diffusion killed when leaving $D$). To this end, it remains to show that there exists $\lambda>0$ such that for any positive $t$ and any bounded  continuous function $f: D \rightarrow\ER$,
\begin{equation}\label{eq:caracQSD}
\ES_{\mu}[f({\xi}_t)1_{\tau_D(\xi)>t}]=\exp(-\lambda t)\mu(f).
\end{equation}
With standard arguments, one can check that this is enough to prove this statement when $f$ is ${\cal C}^2$ with compact support in $D$.\smallskip

Let us consider Equation \eqref{eq:caracQSDapp}. First, up to a potential extraction, one can deduce from Step $1$ that $\lambda_{h_n}\rightarrow\lambda\in\ER$. By the weak convergence of $(\mu_{h_n}^\star)_n$, it follows that the right-hand side of
 \eqref{eq:caracQSDapp} satisfies:
\begin{equation}\label{eq:rhscar}
 C_h(t)\exp(-\lambda_h t)\mu_h^{\star}(f)\rightarrow\exp(-\lambda t)\mu(f).
 \end{equation}
 \smallskip
Second, by \cite{gobet} (Theorem 2.4 and remarks of Section 6 therein about the hypothesis of this theorem),  there exists a constant $C_1(t)$ such that for all $x\in D$,
   $$|\ES_x[f(\xi_t^h)1_{\tau_D(\xi^h)>t}]-\ES_x[f({\xi}_t)1_{\tau_D(\xi)>t}]|\le C_{f} C_1(t)\sqrt{h}.$$

It follows  that
$$\sup_{x\in D} |\ES_x[f(\xi_t^h)1_{\tau_D(\xi^h)>t}]-\ES_x[f({\xi}_t)1_{\tau_D(\xi)>t}]|\xrightarrow{h\rightarrow0}0.$$
As a consequence,
$$ \ES_{\mu_h^\star}[f(\xi_t^h)1_{\tau_D(\xi^h)>t}]-\ES_{\mu_h^\star}[f({\xi}_t)1_{\tau_D(\xi)>t}]\xrightarrow{h\rightarrow0}0.$$
Now, by a dominated convergence argument (using that $\sup_{s\in[0,t]}|\xi_s^x-\xi_s^{x_0}|\rightarrow0$ in probability when $x\rightarrow x_0$), one remarks that $x\mapsto\ES_x[ f({\xi}_t)1_{\tau_D(\xi)>t}]$ is (bounded) continuous on $D$. As
a consequence,  $\ES_{\mu_{h_n}^\star}[f({\xi}_t)1_{\tau_D(\xi)>t}]\rightarrow\ES_{\mu}[f({\xi}_t)1_{\tau_D(\xi)>t}]$ so that
$$\ES_{\mu_{h_n}^\star}[f({\xi}_t^{h_n})1_{\tau_D(\xi^{h_n})>t}]\xrightarrow{h\rightarrow0}\ES_{\mu}[f({\xi}_t)1_{\tau_D(\xi)>t}].$$
Equality \eqref{eq:caracQSD} follows by plugging  the above convergence and \eqref{eq:rhscar} into \eqref{eq:caracQSDapp}.

\section{Extensions}\label{sec:extensions}
\subsection{Non-compact case: Processes coming down from infinity}
In the main results, we chose to restrain our considerations to compact spaces.  When ${\cal E}$ is only locally compact,   the results of this paper  could be extended
to  the class of processes which \textit{come down from infinity} (CDFI), $i.e.$ which have the ability to come back to a compact set in a bounded time with a uniformly lower-bounded probability (for more details, see $e.g.$ \cite{bansaye-meleard,CCLMM,CMMM11}).  First, note that (CDFI)-condition is a usual and sharp assumption which ensures uniqueness of the QSD in the locally compact setting.  Second, the (CDFI)-condition is in particular ensured if $\MM$ is locally compact and if there exists a map $V$, such that $\{V\leq C\}$ is compact for every $C>0$	 and $M : = \sup_{x\in\MM} KV(x)$ is finite. Also, let us remark that $\bar{\cal P}:=\{\mu\in{\cal P}({\cal E}), \mu(V)\le M\}$ is compact for the weak convergence topology (owing to the coercivity condition on $V$) and is invariant under  the action of the kernel $K$. Then, on this subspace $\bar{\cal P}$,   the main arguments of the proof of the main results could be adapted to obtain the convergence of the algorithm.

\subsection{Non-compact space: the minimal QSD}
In Theorem \ref{th:2points}, we have seen that when a process admits several QSDs, our algorithm may select all of its QSDs with positive probabilities. When (CDFI)-condition fails in the non-compact setting (think for instance about the real Ornstein-Uhlenbeck process killed when leaving $\ER_+$), uniqueness generally fails and one can not  expect the algorithm  to select only one QSD. However, if the aim is to approximate the so-called minimal QSD, namely the one associated to the minimal eigenvalue and appearing in the \textit{Yaglom limit},   then, one can use a compact approximation method in the spirit of  \cite{V11-ejp}. More precisely,  consider for instance a diffusion process $(\xi_t)_{t\ge0}$ on $\ER^d$ killed when leaving an unbounded domain $D$ and denote by $\mu^\star$ the related minimal QSD (when exists). Let also $(K_n)_{n\ge1}$ be an increasing sequence of compact spaces such that $\bigcup_{n\ge1} K_n=D$. Then, under some non-degeneracy assumptions (see $e.g.$ Theorem \ref{prop:diff1}),  the QSD  $\mu_n^\star$ related to $K_n$ is unique for every $n$ and  by \cite[Theorem 3.1]{V11-ejp}, $\lim_{n\rightarrow+\infty}\mu_n^\star=\mu^\star$. Then, using our algorithm for an approximation of $\mu_n^\star$ would lead to an approximation of $\mu^\star$ for $n$ large enough.




\subsection{Continuous-time algorithm}
In view of the approximation of the QSD of a diffusion process $(\xi_t)_{t\geq0}$ on a bounded domain $D$ satisfying the assumptions of Theorem \ref{prop:diff1}, it may be of interest to study the convergence of a continuous-time equivalent of our algorithm (instead of considering an Euler scheme with constant step). Of course, without discretization,  such a problem is mainly theoretical but it is worth noting that the difficulties mentioned below should be very similar if one investigated an algorithm with decreasing step (on this topic, see also Remark \ref{rq:decreasstep}).\smallskip
\noindent The continuous-time algorithm is defined as follows:

\begin{itemize}
\item let $x \in D$ and $(X^1_t)_{t\geq 0}$ be as $(\xi_t)_{t\geq0}$ with initial condition $\xi_0=x$;
\item let $\tau^1 = \inf\{t\geq 0 \ | \ X^1_t \notin D \}$, for all $t< \tau^1$ we set $X_t = X^1_t$;
\item Let $\mu_t = \frac{1}{t} \int_0^t \delta_{X_s} ds$ be the occupation measure of $X$;
\item Let $U$ be a random variable distributed as $\mu_{\tau^1}$ (conditionally \tcr{on the stopping time $\sigma$-field $\mathcal{F}_{\tau^1}$});
\item we set $X_{\tau^1}=U$;
\item the process then evolves  as above starting from $U$.
\end{itemize}
We denote by $(\tau^k)_{k\geq 1}$ the sequence of jumping times. At the $n^{\text{th}}$ time, the process jumps uniformly over the positions from all its past and not only from $[\tau^{n},\tau^{n+1}]$. This sequence of stopping times is increasing and almost surely converges to some $\tau^\infty \in (0,+ \infty]$. The process $(X_t)_{t\geq0}$ is well defined until the time $\tau^\infty$. It is not trivial that $\tau^\infty = \infty$ because the process $(X_t)_{t\geq 0}$ can be arbitrarily close to the boundary and the  times between jumps become arbitrarily short. Nonetheless, we have

\begin{lem}[Non-explosion of the continuous-time algorithm]
\label{lem:non-explosion}
Under the assumptions of Theorem \ref{prop:diff1}, we have $\tau^\infty=+\infty$ a.s.
\end{lem}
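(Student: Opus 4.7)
The plan is to argue by contradiction: assume $\PE(\tau^\infty < +\infty) > 0$ and derive a contradiction through a conditional Borel--Cantelli argument applied to the inter-jump durations $T_n := \tau^{n+1} - \tau^n$. The heart of the proof is that the empirical occupation measure $\mu_{\tau^n}$ is \emph{cumulative}: however strongly the algorithm might be attracted toward $\partial D$, the mass deposited during the very first excursion remains in the interior forever, and this is enough to guarantee non-negligible future excursions.

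First I will establish a uniform survival estimate: there exist $\delta, T_0, q > 0$ such that for every $y$ with $d(y,\partial D) \geq \delta$,
\begin{equation*}
\PE_y\!\left(\tau_D(\xi) > T_0\right) \;\geq\; q.
\end{equation*}
This is a small-ball argument using only the uniform ellipticity and the boundedness of $b,\sigma$ on $\widebar{D}$: with probability bounded below uniformly in $y$, the diffusion remains in $B(y,\delta/2) \subset D$ on $[0,T_0]$ (this is the same kind of estimate as the one invoked in the proof of Lemma \ref{lem:consunifellip}, applied along the constant path $t\mapsto y$). Setting $K_\delta := \{x \in D : d(x,\partial D) \geq \delta\}$ and noticing that, conditionally on the pre-jump information $\mathcal{F}_{\tau^n}$, the resurrection point $U_n$ is drawn from $\mu_{\tau^n}$ and the post-jump process evolves as the killed diffusion from $U_n$, we deduce
\begin{equation*}
\PE\!\left(T_{n+1} > T_0 \,\big|\, \mathcal{F}_{\tau^n}\right) \;\geq\; q\,\mu_{\tau^n}(K_\delta).
\end{equation*}

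Second, I will bound $\mu_{\tau^n}(K_\delta)$ from below on the putative bad event by exploiting the cumulative nature of the occupation measure. Choosing $\delta \in (0,d(x,\partial D)/2)$, the continuity of sample paths ensures that $S := \int_0^{\tau^1} \1_{K_\delta}(X_s)\,ds > 0$ almost surely. Since $(X_s)_{s\le \tau^n}$ spends at least $S$ units of time in $K_\delta$, we have $\mu_{\tau^n}(K_\delta) \geq S/\tau^n$ for every $n\geq 1$. For positive integers $M,k$, set $A_{M,k} := \{\tau^\infty \leq M\} \cap \{S \geq 1/k\}$. On $A_{M,k}$ the preceding inequality yields $\mu_{\tau^n}(K_\delta) \geq 1/(kM)$, hence $\PE(T_{n+1} > T_0 \mid \mathcal{F}_{\tau^n}) \geq q/(kM)$ for all $n\geq 1$, and therefore $\sum_n \PE(T_{n+1} > T_0 \mid \mathcal{F}_{\tau^n}) = +\infty$ on $A_{M,k}$. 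By L\'evy's conditional Borel--Cantelli lemma, $\{T_{n+1} > T_0\}$ occurs infinitely often almost surely on $A_{M,k}$, whence $\sum_n T_n = +\infty$ there. This contradicts $\sum_n T_n \leq \tau^\infty \leq M$ on $A_{M,k}$, so $\PE(A_{M,k}) = 0$ for every $M,k$. Since $\PE(S>0) = 1$ and $\{\tau^\infty < \infty\} \cap \{S > 0\} = \bigcup_{M,k\geq 1} A_{M,k}$, we conclude $\PE(\tau^\infty < \infty) = 0$.

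The main obstacle is the uniform survival lemma: it must be genuinely uniform in the starting point over the compact set $K_\delta$, which requires a small-ball estimate for the non-degenerate diffusion on $\widebar{D}$ with uniform constants. This is, however, a standard consequence of uniform ellipticity and the boundedness of the coefficients; once it is in place, the cumulative structure of $\mu_{\tau^n}$ does the rest of the work, preventing the inter-jump times from collapsing to zero.
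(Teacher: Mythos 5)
Your proof is correct, and it takes a route that is genuinely different in mechanism from the one in the paper, while being built on the same key observation: the occupation measure is cumulative, so the mass $\mu_{\tau^1}$ deposits in the set $K_\delta$ strictly inside $D$ can never be washed out, only diluted, and on $\{\tau^\infty \le M\}$ the dilution factor $\tau^n$ is bounded by $M$. The paper exploits this by constructing a ball $B(x,\delta)$ near the starting point, showing that the process must resurrect into $B(x,\delta)$ infinitely often on $\{\tau^\infty < \infty\}$, and then pathwise bounding the resulting excursion lengths from below by an i.i.d.\ sequence of positive random variables (via a coupling with a common Brownian driver), so that the strong law of large numbers forces $\tau^\infty = \infty$. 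You instead convert the persistent mass into a uniform \emph{conditional-probability} lower bound $\PE(T_{n+1} > T_0 \mid \mathcal{F}_{\tau^n}) \geq q/(kM)$ on $A_{M,k}$, then invoke L\'evy's conditional Borel--Cantelli lemma to get infinitely many excursions longer than $T_0$. The net effect is the same contradiction, but your route avoids having to construct an explicit i.i.d.\ minorizing sequence (and the coupling issue of reusing the same driving Brownian motion across resurrections); the price is that you need the uniform-in-$y$ small-ball survival estimate $\PE_y(\tau_D > T_0) \geq q$ for $y \in K_\delta$, which is the natural analogue of the paper's hitting-time estimate in Lemma~\ref{lem:consunifellip}(ii) and follows from uniform ellipticity and boundedness of the coefficients. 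One small point of bookkeeping you handled implicitly but should make explicit if writing this up: the lower bound $q/(kM)$ must hold on an $\mathcal{F}_{\tau^n}$-measurable event containing $A_{M,k}$, which it does since $A_{M,k} \subset \{\tau^n \le M\} \cap \{S \ge 1/k\}$ and both $\tau^n$ and $S$ are $\mathcal{F}_{\tau^n}$-measurable.
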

The proof is given below. This type of problem is reminiscent of the Fleming-Viot particle system \cite{BBF,BBP, V11}. However, the comparison stops here because our procedure is not "Markovian" and their proofs can not be adapted.


\begin{proof}[Proof of Lemma \ref{lem:non-explosion}]
Let $x\in D$ be the starting point of $(X_t)_{t\geq0}$. Fix $\varepsilon>0$ such that $d(x, \partial D) \tcrg{>} \varepsilon$ and choose $0<\delta<\varepsilon$. Let $(B_t)_{t \geq 0}$ a Brownian motion and choose $z$ in the ball $B(x,\delta)$ of center $x$ and radius $\delta$. We let $(\xi^z_t)_{t\geq 0}$ be the solution of
$$
d\xi^z_t=b(\xi^z_t)dt+\sigma(\xi^z_t) dB_t, \quad \XX_0^z =z,
$$
and
$$
\tau_{\varepsilon,\delta,x} = \inf_{z \in B(x,\delta)} \inf\{t\geq 0 \ | \ \xi^z_t \notin B(x,\varepsilon) \}.
$$

The variable $\tau_{\varepsilon,\delta,x}$ is almost-surely positive. On $\{ \tau^\infty <+ \infty\}$, we have, for every $t\in [\tau^1, \tau^\infty)$,
$$
\mu_t(B(x, \delta)) \geq \frac{\tau_{\varepsilon,\delta,x}}{\tau^\infty}.
$$
As a consequence on $\{ \tau^\infty <+ \infty\}$, the process $(X_t)_{t\geq 0}$ jumps infinitely often in $B(x, \delta)$. But if it starts from a point $z\in B(x, \delta)$, its absorption time can be bounded from below by a random variable $\sigma$ (independent from the past) such that $\sigma$ has the same law as $\tau_{\varepsilon,\delta,x}$. Hence, we have
$$
\tau^\infty \geq \sum_{n\geq 1} \sigma_n,
$$
on $\{\tau^\infty < + \infty\}$, where $(\sigma_n)_{n\geq 1}$ is a sequence of i.i.d. random variable distributed as $\tau_{\varepsilon,\delta,x}$. As they are positive, the strong law of large numbers ensures that $\sum_{n\geq 0} \sigma_n=+\infty$ almost surely and then $\mathbb{P}(\tau^\infty <+ \infty)=0$.
\end{proof}

\noindent\textbf{Acknowledgements.} The first author thanks the SNF for the grants  200020/149871 and  200021/175728. The third author thanks the Centre Henri Lebesgue ANR-11-LABX-0020-01 for its stimulating mathematical research programs.


\bibliographystyle{abbrv}
\bibliography{ref}

\begin{thebibliography}{10}

\bibitem{AFP}
D.~Aldous, B.~Flannery, and J.-L. Palacios.
\newblock Two applications of urn processes: the fringe analysis of search
  trees and the simulation of quasi-stationary distributions of markov chains.
\newblock {\em Probab. Eng. Inf. Sci.}, 2(3):293--307, 1988.

\bibitem{bansaye-meleard}
V.~{Bansaye}, S.~{M{\'e}l{\'e}ard}, and M.~{Richard}.
\newblock {Speed of coming down from infinity for birth and death processes}.
\newblock {\em ArXiv e-prints}, Apr. 2015.

\bibitem{bass_book}
R.~F. Bass.
\newblock {\em Diffusions and elliptic operators}.
\newblock Probability and its Applications (New York). Springer-Verlag, New
  York, 1998.

\bibitem{BenAri}
I.~Ben-Ari and R.~G. Pinsky.
\newblock Spectral analysis of a family of second-order elliptic operators with
  nonlocal boundary condition indexed by a probability measure.
\newblock {\em J. Funct. Anal.}, 251(1):122--140, 2007.

\bibitem{B99}
M.~Bena{\"{\i}}m.
\newblock Dynamics of stochastic approximation algorithms.
\newblock In {\em S\'eminaire de {P}robabilit\'es, {XXXIII}}, volume 1709 of
  {\em Lecture Notes in Math.}, pages 1--68. Springer, Berlin, 1999.

\bibitem{BC15}
M.~Bena{\"{\i}}m and B.~Cloez.
\newblock A stochastic approximation approach to quasi-stationary distributions
  on finite spaces.
\newblock {\em Electron. Commun. Probab.}, 20:no. 37, 14, 2015.

\bibitem{BH96}
M.~Bena{\"{\i}}m and M.~W. Hirsch.
\newblock Asymptotic pseudotrajectories and chain recurrent flows, with
  applications.
\newblock {\em J. Dynam. Differential Equations}, 8(1):141--176, 1996.

\bibitem{BLR02}
M.~Bena{\"{i}}m, M.~Ledoux, and O.~Raimond.
\newblock Self-interacting diffusions.
\newblock {\em Probab. Theory Related Fields}, 122(1):1--41, 2002.

\bibitem{berglund_landon}
N.~Berglund and D.~Landon.
\newblock Mixed-mode oscillations and interspike interval statistics in the
  stochastic {F}itz{H}ugh-{N}agumo model.
\newblock {\em Nonlinearity}, 25(8):2303--2335, 2012.

\bibitem{BBF}
M.~Bieniek, K.~Burdzy, and S.~Finch.
\newblock Non-extinction of a {F}leming-{V}iot particle model.
\newblock {\em Probab. Theory Related Fields}, 153(1-2):293--332, 2012.

\bibitem{BBP}
M.~Bieniek, K.~Burdzy, and S.~Pal.
\newblock Extinction of {F}leming-{V}iot-type particle systems with strong
  drift.
\newblock {\em Electron. J. Probab.}, 17:no. 11, 15, 2012.

\bibitem{BGZ}
J.~{Blanchet}, P.~{Glynn}, and S.~{Zheng}.
\newblock {Theoretical analysis of a Stochastic Approximation approach for
  computing Quasi-Stationary distributions}.
\newblock {\em ArXiv e-prints}, Jan. 2014.

\bibitem{bouleau-lepingle}
N.~Bouleau and D.~L\'epingle.
\newblock {\em Numerical methods for stochastic processes}.
\newblock Wiley Series in Probability and Mathematical Statistics: Applied
  Probability and Statistics. John Wiley \& Sons, Inc., New York, 1994.
\newblock A Wiley-Interscience Publication.

\bibitem{BHM00}
K.~Burdzy, R.~Ho{\l}yst, and P.~March.
\newblock A {F}leming-{V}iot particle representation of the {D}irichlet
  {L}aplacian.
\newblock {\em Comm. Math. Phys.}, 214(3):679--703, 2000.

\bibitem{CCLMM}
P.~Cattiaux, P.~Collet, A.~Lambert, S.~Mart{\'{\i}}nez, S.~M{\'e}l{\'e}ard, and
  J.~San~Mart{\'{\i}}n.
\newblock Quasi-stationary distributions and diffusion models in population
  dynamics.
\newblock {\em Ann. Probab.}, 37(5):1926--1969, 2009.

\bibitem{CV14}
N.~Champagnat and D.~Villemonais.
\newblock Exponential convergence to quasi-stationary distribution and
  {$Q$}-process.
\newblock {\em Probab. Theory Related Fields}, 164(1-2):243--283, 2016.

\bibitem{CT16}
B.~Cloez and M.-N. Thai.
\newblock Fleming-viot processes: two explicit examples.
\newblock {\em ALEA Lat. Am. J. Probab. Math. Stat.}, 13:337--356, 2016.

\bibitem{CT13}
B.~Cloez and M.-N. Thai.
\newblock Quantitative results for the {F}leming-{V}iot particle system and
  quasi-stationary distributions in discrete space.
\newblock {\em Stochastic Process. Appl.}, 126(3):680--702, 2016.

\bibitem{CMMM11}
P.~Collet, S.~Mart{\'{\i}}nez, S.~M{\'e}l{\'e}ard, and J.~San~Mart{\'{\i}}n.
\newblock Quasi-stationary distributions for structured birth and death
  processes with mutations.
\newblock {\em Probab. Theory Related Fields}, 151(1-2):191--231, 2011.

\bibitem{MM00}
P.~Del~Moral and L.~Miclo.
\newblock A {M}oran particle system approximation of {F}eynman-{K}ac formulae.
\newblock {\em Stochastic Process. Appl.}, 86(2):193--216, 2000.

\bibitem{dudley}
R.~M. Dudley.
\newblock {\em {Real Analysis and Probability }}, volume~74 of {\em Cambridge
  Studies in Advanced Mathematics}.
\newblock Cambridge University Press, 2002.

\bibitem{duf00}
M.~Duflo.
\newblock {\em {Random Iterative Models}}.
\newblock Springer, 2000.

\bibitem{EthierKurtz}
S.~N. Ethier and T.~G. Kurtz.
\newblock {\em {Markov Processes: Characterization and Convergence}}.
\newblock Wiley Series in Probability and Statistics, 1986.

\bibitem{Ferrari}
P.~A. Ferrari, H.~Kesten, S.~Martinez, and P.~Picco.
\newblock Existence of quasi-stationary distributions. {A} renewal dynamical
  approach.
\newblock {\em Ann. Probab.}, 23(2):501--521, 1995.

\bibitem{FM07}
P.~A. Ferrari and N.~Mari{\'c}.
\newblock Quasi stationary distributions and {F}leming-{V}iot processes in
  countable spaces.
\newblock {\em Electron. J. Probab.}, 12:no. 24, 684--702, 2007.

\bibitem{gobet}
E.~Gobet.
\newblock Weak approximation of killed diffusion using {E}uler schemes.
\newblock {\em Stochastic Process. Appl.}, 87(2):167--197, 2000.

\bibitem{GongZhao}
G.~L. Gong, M.~P. Qian, and Z.~X. Zhao.
\newblock Killed diffusions and their conditioning.
\newblock {\em Probab. Theory Related Fields}, 80(1):151--167, 1988.

\bibitem{GK04}
I.~Grigorescu and M.~Kang.
\newblock Immortal particle for a catalytic branching process.
\newblock {\em Probab. Theory Related Fields}, 153(1-2):333--361, 2012.

\bibitem{LP08}
D.~Lamberton and G.~Pag{\`e}s.
\newblock A penalized bandit algorithm.
\newblock {\em Electron. J. Probab.}, 13:no. 13, 341--373, 2008.

\bibitem{LPT}
D.~Lamberton, G.~Pag{\`e}s, and P.~Tarr{\`e}s.
\newblock When can the two-armed bandit algorithm be trusted?
\newblock {\em Ann. Appl. Probab.}, 14(3):1424--1454, 2004.

\bibitem{L07}
V.~Lemaire.
\newblock An adaptive scheme for the approximation of dissipative systems.
\newblock {\em Stochastic Process. Appl.}, 117(10):1491--1518, 2007.

\bibitem{lemaire-menozzi}
V.~Lemaire and S.~Menozzi.
\newblock On some non asymptotic bounds for the {E}uler scheme.
\newblock {\em Electron. J. Probab.}, 15:no. 53, 1645--1681, 2010.

\bibitem{MV12}
S.~M{\'e}l{\'e}ard and D.~Villemonais.
\newblock Quasi-stationary distributions and population processes.
\newblock {\em Probab. Surv.}, 9:340--410, 2012.

\bibitem{MT93}
S.~P. Meyn and R.~L. Tweedie.
\newblock Stability of {M}arkovian processes. {III}. {F}oster-{L}yapunov
  criteria for continuous-time processes.
\newblock {\em Adv. in Appl. Probab.}, 25(3):518--548, 1993.

\bibitem{DG99}
P.~D. Moral and A.~Guionnet.
\newblock On the stability of measure valued processes with applications to
  filtering.
\newblock {\em Comptes Rendus de l'Académie des Sciences - Series I -
  Mathematics}, 329(5):429 -- 434, 1999.

\bibitem{OV16}
W.~O{\c{c}}afrain and D.~Villemonais.
\newblock Non-failable approximation method for conditioned distributions.
\newblock {\em arXiv preprint arXiv:1606.08978}, 2016.

\bibitem{P08}
F.~Panloup.
\newblock Recursive computation of the invariant measure of a stochastic
  differential equation driven by a {L}\'evy process.
\newblock {\em Ann. Appl. Probab.}, 18(2):379--426, 2008.

\bibitem{P07}
R.~Pemantle.
\newblock A survey of random processes with reinforcement.
\newblock {\em Probab. Surv.}, 4:1--79, 2007.

\bibitem{P95}
R.~G. Pinsky.
\newblock {\em Positive harmonic functions and diffusion}, volume~45 of {\em
  Cambridge Studies in Advanced Mathematics}.
\newblock Cambridge University Press, Cambridge, 1995.

\bibitem{2016arXiv160903436P}
M.~{Pollock}, P.~{Fearnhead}, A.~M. {Johansen}, and G.~O. {Roberts}.
\newblock {The Scalable Langevin Exact Algorithm: Bayesian Inference for Big
  Data}.
\newblock {\em ArXiv e-prints}, Sept. 2016.

\bibitem{RKSF}
S.~T. Rachev, L.~B. Klebanov, S.~V. Stoyanov, and F.~J. Fabozzi.
\newblock {\em The methods of distances in the theory of probability and
  statistics}.
\newblock Springer, New York, 2013.

\bibitem{V11-ejp}
D.~Villemonais.
\newblock Interacting particle systems and {Y}aglom limit approximation of
  diffusions with unbounded drift.
\newblock {\em Electron. J. Probab.}, 16:no. 61, 1663--1692, 2011.

\bibitem{V11}
D.~Villemonais.
\newblock General approximation method for the distribution of {M}arkov
  processes conditioned not to be killed.
\newblock {\em ESAIM Probab. Stat.}, 18:441--467, 2014.

\end{thebibliography}

\end{document}